\theoremstyle{plain}
\newtheorem{theorem}{Theorem}[section]
\newtheorem{lemma}[theorem]{Lemma}
\newtheorem{proposition}[theorem]{Proposition}
\newtheorem{corollary}[theorem]{Corollary}
\newtheorem{definition}[theorem]{Definition} \theoremstyle{definition}
\newtheorem{example}[theorem]{Example}
\newtheorem{remark}[theorem]{Remark}
\newcommand{\duer}{^*_}
\newcommand{\R}{\mathbb{R}} 
\newcommand{\inv}{^{-1}}
\newcommand{\N}{\mathbb{N}} 
\newcommand{\mx}{\mathfrak{X}} 
\newcommand{\dr}{\mathbf{d}}
\newcommand{\ldr}[1]{{{\pounds}}_{#1}}
\newcommand{\ip}[1]{{\mathbf{i}}_{#1}}
\newcommand{\an}[1]{\arrowvert_{#1}} 
\newcommand{\lb}{\llbracket} 
\newcommand{\rb}{\rrbracket}
\newcommand{\Beta}{\boldsymbol{\beta}}
\DeclareMathOperator{\pr}{pr}
\DeclareMathOperator{\Id}{Id}
\DeclareMathOperator{\Jac}{Jac}
\newcommand{\nsp}[2]  {%
\langle\mspace{-6.8mu}%
\langle\mspace{-6.8mu}%
\langle\mspace{-6.8mu}%
\langle\mspace{-6.8mu}%
\langle\mspace{-6.8mu}%
\langle\mspace{-6.8mu}%
\langle{#1,\,}{#2}%
\rangle%
\mspace{-6.8mu}\rangle%
\mspace{-6.8mu}\rangle%
\mspace{-6.8mu}\rangle%
\mspace{-6.8mu}\rangle%
\mspace{-6.8mu}\rangle%
\mspace{-6.8mu}\rangle}
\begin{document}
\title{On LA-Courant algebroids and Poisson Lie 2-algebroids.}


\author{M. Jotz Lean} \address{Mathematisches Institut, Georg-August-Universit\"at G\"ottingen.}  \email{madeleine.jotz-lean@mathematik.uni-goettingen.de}
\subjclass[2010]{Primary: 53B05, 
  Secondary:
  53D17. 
}

\begin{abstract}
  This paper provides an alternative, much simpler, definition for
  Li-Bland's LA-Courant algebroids, or Poisson Lie 2-algebroids, in
  terms of split Lie 2-algebroids and self-dual
  2-representations. This definition generalises in a precise sense
  the characterisation of (decomposed) double Lie algebroids via
  matched pairs of 2-representations. We use the known geometric
  examples of LA-Courant algebroids in order to provide new examples
  of Poisson Lie 2-algebroids, and we explain in this general context
  Roytenberg's equivalence of Courant algebroids with symplectic Lie
  2-algebroids.

  We study further the core of an LA-Courant algebroid and we prove
  that it carries an induced degenerate Courant algebroid
  structure. In the nondegenerate case, this gives a new construction
  of a Courant algebroid from the corresponding symplectic Lie
  2-algebroid.  Finally we completely characterise VB-Dirac and
  LA-Dirac structures via simpler objects, that we compare to
  Li-Bland's pseudo-Dirac structures.
\end{abstract}
\maketitle


\tableofcontents

\section{Introduction}
This paper surveys the author's recent equivalences of graded
manifolds of degree $2$ with metric double vector bundles
\cite{Jotz18b} (see also \cite{delCarpio-Marek15}), of self-dual
2-representations with decomposed metric VB-algebroids \cite{Jotz18b}
(using \cite{GrMe10a}), and of split Lie 2-algebroids with decomposed
VB-Courant algebroids \cite{Jotz17b}, \cite{Li-Bland12}. We combine
all those results to provide an alternative definition for Li-Bland's
\emph{LA-Courant algebroids} \cite{Li-Bland12}, which are equivalent
to \emph{Poisson Lie 2-algebroids}.

We prove that a split Poisson Lie 2-algebroid is equivalent to the
matched pair of a self-dual 2-representation with a split Lie
2-algebroid. Here, the self-dual 2-representation is the one that is
equivalent to the Poisson structure of degree $-2$. Given a Poisson
Lie 2-algebroid, a splitting of the underlying $[2]$-manifold is
equivalent to a decomposition of the corresponding metric double
vector bundle \cite{Jotz18b}. The induced split Lie 2-algebroid is
then equivalent to a VB-Courant algebroid structure in this
decomposition \cite{Jotz17b} and the induced split Poisson structure
of degree $-2$ is equivalent to the decomposition of a (metric)
VB-algebroid structure on the other side of the metric double vector
bundle \cite{Jotz18b}. The compatibility of the graded Poisson
structure and the Lie 2-algebroid structure is equivalent to the
VB-Courant algebroid and the metric VB-algebroid defining together an
LA-Courant algebroid (short for \emph{Lie algebroid Courant
  algebroid}) \cite{Li-Bland12}. Hence, we find that a metric double
vector bundle with a VB-Courant algebroid structure and a metric
VB-algebroid structure define together an LA-Courant algebroid if and
only if, in any decomposition, the induced split Lie 2-algebroid and
the induced self-dual 2-representation build a matched pair.

The original definition of an LA-Courant algebroid \cite{Li-Bland12}
is inspired from and as technical as Mackenzie's first definition of a
double Lie algebroid \cite{Mackenzie11}. In short, a VB-Courant
algebroid with a linear Lie algebroid on its other side is an
LA-Courant algebroid if a relation defined by the Lie algebroid
structure in the tangent prolongation of the VB-Courant algebroid --
the underlying geometric structure is a triple vector bundle -- is a
Dirac structure in this tangent prolongation. In an earlier version of
this work \cite{Jotz15}, we deduced from this definition our equations in the
definition of the matched pair of a split Lie 2-algebroid with a
self-dual 2-representation. Then we found easily that those equations
were also equivalent to the homological vector field defining the Lie
2-algebroid to be a Poisson vector field.

However, working out the equations directly from Li-Bland's LA-Courant
algebroid condition is very long and technical (see the Appendix B of
\cite{Jotz15}). In this paper, we prefer therefore using Li-Bland's
equivalence \cite{Li-Bland12} in order to find the characterisation of
an LA-Courant algebroid via the matched pair.  We provide so a better
(more handy) definition of LA-Courant algebroids.  We explain again along
the way the parallels between the theory of Lie algebroids,
double Lie algebroids and 2-representations on the one hand, and
Courant algebroids, LA-Courant algebroids and Lie 2-algebroids on the
other hand. We find in particular that a matched pair of
$2$-representations \cite{GrJoMaMe18} not only defines a split Lie
2-algebroid \cite{Jotz17b}, but also a split Poisson Lie 2-algebroid
-- this is a different construction. 

We prove further that the core of an LA-Courant algebroid has an induced
degenerate Courant algebroid structure -- just like the core of a
double Lie algebroid has an induced Lie algebroid structure
\cite{Mackenzie11,GrJoMaMe18}. This allows us to explain in a
constructive manner the equivalence of symplectic Lie 2-algebroids
with Courant algebroids \cite{Roytenberg02}.

Finally, we study VB- and LA-Dirac structures in VB- and LA-Courant
algebroids, and in particular, we prove that LA-Dirac structures are
double Lie algebroids. More precisely, any double Lie algebroid can be
understood as an LA-Dirac structure in an appropriate LA-Courant
algebroid; just like any Lie algebroid can be seen as a Dirac
structure in the induced Courant algebroid. This shows that the 7
equations defining a matched pair of 2-representations
\cite{GrJoMaMe18} can in fact be deduced from the 5 equations defining
the matched pair of a split Lie 2-algebroid with a self-dual
2-representation.  For completeness, we explain as well how Li-Bland's
pseudo-Dirac structures \cite{Li-Bland14} fit in our description of
LA-Dirac structures in the tangent prolongation of a Courant
algebroid.

\subsection*{Outline of the paper}
In Section \ref{prereqs}, we recall some general notions and
conventions around dull brackets, Dorfman connections, Courant
algebroids, graded manifolds and double vector bundles.  In Section
\ref{known_eqs} we recall the equivalence of metric double vector
bundles with $[2]$--manifolds, of metric VB-algebroids with Poisson
[2]-manifolds and with self-dual 2-representations when decomposed.
We recall also the equivalence of decompositions of VB-Courant
algebroids with split Lie 2-algebroids.  In Section \ref{sec:LA-Cour}
we give the definition of the matched pair of a split Lie 2-algebroid
with a self-dual 2-representation, and we prove our main theorem. In
Section \ref{core_LA_CA} we study the core of an LA-Courant algebroid and in
Section \ref{sec:dirac} we describe VB- and LA-Dirac structures in
decompositions. The text is illustrated with several examples that
were prepared in \cite{Jotz18a, Jotz17b}.

\section{Prerequisites, notation and conventions}\label{prereqs}
We recall in this section some necessarily background, and we set our
notation convention.
\subsection{General conventions}
We write $p_M\colon TM\to M$, $q_E\colon E\to M$ for vector bundle
maps. For a vector bundle $Q\to M$ we often identify without further
mentioning the vector bundle $(Q^*)^*$ with $Q$ via the canonical
isomorphism. We write $\langle\cdot\,,\cdot\rangle$ for the canonical
pairing of a vector bundle with its dual;
i.e.~$\langle q_m,\tau_m\rangle=\tau_m(q_m)$ for $q_m\in Q$ and
$\tau_m\in Q^*$. We use several different pairings; in general, which
pairing is used is clear from its arguments.  Given a section
$\varepsilon$ of $E^*$, we always write
$\ell_\varepsilon\colon E\to \R$ for the linear function associated to
it, i.e.~the function defined by
$e_m\mapsto \langle \varepsilon(m), e_m\rangle$ for all $e_m\in E$.

Let $M$ be a smooth manifold. We denote by $\mx(M)$ and $\Omega^1(M)$
the sheaves of local smooth sections of the tangent and the cotangent
bundle, respectively. For an arbitrary vector bundle $E\to M$, the
sheaf of local sections of $E$ will be written $\Gamma(E)$.  

\subsection{Dull brackets, Dorfman connections, Courant algebroids}
Let $q_Q\colon Q\to M$ be an anchored vector bundle, with anchor
$\rho_Q\colon Q\to TM$. A \textbf{dull bracket} on sections of $Q$ is
here an $\R$-bilinear, skew-symmetric bracket
$\lb\cdot\,,\cdot\rb\colon\Gamma(Q)\times\Gamma(Q)\to\Gamma(Q)$
that is compatible with the anchor, $\rho_Q\lb q_1, q_2\rb=[\rho_Q(q_1), \rho_Q(q_2)]$
and that satisfies the Leibniz identity
\begin{equation}\label{leibniz}
\lb q_1, f q_2\rb =\rho_Q(q_1)(f)q_2+f\lb q_1, q_2\rb
\end{equation}
for all $f\in C^\infty(M)$ and $q_1,q_2\in\Gamma(Q)$.

Dualising the dull bracket in the sense of derivations, 
we get a \textbf{Dorfman connection} $\Delta\colon
\Gamma(Q)\times\Gamma(Q^*)\to\Gamma(Q^*)$:
\begin{equation}\label{dualdd} \rho_Q(q)\langle q',\tau\rangle=\langle\lb q,q'\rb,
\tau\rangle+\langle q', \Delta_q\tau\rangle
\end{equation}
for all $q,q'\in\Gamma(Q)$ and $\tau\in\Gamma(Q^*)$. The Leibniz
identities in the two entries of the dull bracket give 
$\Delta_q(f\tau)=f\Delta_q\tau+\rho_Q(q)(f)\tau$ and 
$\Delta_{fq}\tau=f\Delta_q\tau+\langle q, \tau\rangle\rho_Q^*\dr f$
for $q\in\Gamma(Q)$, $\tau\in\Gamma(Q^*)$ and $f\in C^\infty(M)$. The
compatibility of the bracket with the anchor reads 
\begin{equation}\label{dorfman_on_exact}
\Delta_q(\rho_Q^*\dr f)=\rho_Q^*\dr(\rho_Q(q)(f))
\end{equation}
 $q\in\Gamma(Q)$ and $f\in C^\infty(M)$. 

Given an $\R$-bilinear skew-symmetric bracket $[\cdot\,,\cdot]$ on sections of a
vector bundle $E\to M$, its \textbf{Jacobiator} is the map
$\operatorname{Jac}_{[\cdot\,,\cdot]}\colon\Gamma(E)\times\Gamma(E)\times\Gamma(E)\to\Gamma(E)$,
\[\operatorname{Jac}_{[\cdot\,,\cdot]}(e_1,e_2,e_3)=[[e_1,e_2],e_3]+[[e_2,e_3],e_1]+[[e_3,e_1],e_2].
\]
The Jacobiator of a dull bracket satisfies
\begin{equation}\label{Jac_Dorfman}
\operatorname{Jac}_{\lb\cdot\,,\cdot\rb}(q_1,q_2,q_3)=R_\Delta(q_1,q_2)^*q_3,
\end{equation}
where $R_\Delta\in\Omega^2(Q,\operatorname{End}(Q^*))$ is the
curvature of the dual Dorfman connection:
$R_\Delta(q_1,q_2)\tau=\Delta_{q_1}\Delta_{q_2}\tau-\Delta_{q_2}\Delta_{q_1}\tau-\Delta_{\lb q_1,q_2\rb}\tau$.

A dull algebroid $(Q,\rho_Q,\lb\cdot\,,\cdot\rb)$ as above defines as
usual a Koszul
differential operator
$d_Q\colon \Omega^\bullet(Q)\to\Omega^{\bullet+1}(Q)$ with
$\dr_Q(\omega\wedge\eta)=\dr_Q\omega\wedge
\eta+(-1)^{|\eta|}\omega\wedge\dr_Q\eta$
for all $\omega,\eta\in \Omega^\bullet(Q)$.

Let $E$ be a vector bundle over the same base manifold $M$.  A
\textbf{linear $Q$-connection on $E$} is a linear connection
$\nabla\colon\Gamma(Q)\times\Gamma(E)\to\Gamma(E)$. The curvature
$R_\nabla\in\Omega^2(Q,\operatorname{End}(E))$ of $\nabla$ is defined
as usual. The dull bracket and the connection define together a
differential operator
$\dr_\nabla \colon \Omega^\bullet(Q,E)\to\Omega^{\bullet+1}(Q,E)$:
$\dr_\nabla e=\nabla_\cdot e$ for $e\in\Gamma(E)$, and 
$\dr_\nabla(\omega\wedge\eta)=\dr_Q\omega\wedge
\eta+(-1)^{|\eta|}\omega\wedge\dr_\nabla\eta$ for $\omega\in
\Omega^\bullet(Q)$
and $\eta\in\Omega^\bullet(Q,E)$.

\medskip
Consider an anchored vector bundle $(\mathsf E\to M, \rho)$.  Assume that $\mathsf E$ is paired with
itself via a pairing $\langle\cdot\,,\cdot\rangle\colon \mathsf
E\times_M \mathsf E\to \R$ and that there exists a
map $\mathcal D\colon C^\infty(M)\to \Gamma(\mathsf E)$ such that
$\langle \mathcal Df, e\rangle=\rho(e)(f)$ for all
$f\in C^\infty(M)$.
Then $\mathsf E\to M$ is a \textbf{degenerate Courant algebroid} over the manifold $M$ if $\mathsf E$ is in addition
equipped with an $\R$-bilinear bracket $\lb\cdot\,,\cdot\rb$ on the
smooth sections $\Gamma(\mathsf E)$ such that the following conditions
are satisfied:
\begin{enumerate}\label{CA_def}
\item[(CA1)] $\lb e_1, \lb e_2, e_3\rb\rb= \lb \lb e_1, e_2\rb, e_3\rb+ \lb
  e_2, \lb e_1, e_3\rb\rb$,
\item[(CA2)] $\rho(e_1 )\langle e_2, e_3\rangle= \langle\lb e_1,
  e_2\rb, e_3\rangle + \langle e_2, \lb e_1 , e_3\rb\rangle$,
\item[(CA3)] $\lb e_1, e_2\rb+\lb e_2, e_1\rb =\mathcal D\langle e_1 ,
  e_2\rangle$,
\item[(CA4)]     $\rho\lb e_1, e_2\rb = [\rho(e_1), \rho(e_2)]$,
\item[(CA5)]      $\lb e_1, f e_2\rb= f \lb e_1 , e_2\rb+ (\rho(e_1 )f )e_2$
\end{enumerate}
for all $e_1, e_2, e_3\in\Gamma(\mathsf E)$ and $f\in C^\infty(M)$.
If the pairing $\langle\cdot\,,\cdot\rangle$ is nondegenerate, then
$(\mathsf E\to M, \rho, \langle\cdot\,,\cdot\rangle,
\lb\cdot\,,\cdot\rb)$ is a \textbf{Courant algebroid}
\cite{LiWeXu97,Roytenberg99} and Conditions (CA4) and (CA5) follow
then from (CA1), (CA2) and (CA3) (see \cite{Uchino02} and also
\cite{Jotz18a} for a quicker proof).

\subsection{$\N$-Manifolds of degree 2}
\medskip An \textbf{ $\N$-manifold} $\mathcal M$ of degree $2$ and
dimension $(p; r_1,r_2)$ is a smooth manifold $M$ of dimension $p$
together a sheaf $C^\infty(\mathcal M)$ of $\N$-graded, graded
commutative, associative, unital $C^\infty(M)$-algebras over $M$, that
is locally freely generated by $r_1+r_2$ elements
$\xi_1^{1},\ldots,\xi_1^{r_1}$, $\xi_2^1,\ldots,\xi_2^{r_2}$ with
$\xi_i^j$ of degree $i$ for $i=1,2$ and $j\in\{1,\ldots,r_i\}$.  We
write ``$[2]$-manifold'' for ``$\N$-manifold of degree $2$''. A
morphism of $\N$-manifolds $\mu\colon\mathcal N\to \mathcal M$ over a
smooth map $\mu_0\colon N\to M$ of the underlying smooth manifolds is
a morphism
$\mu^\star\colon C^\infty(\mathcal M)\to C^\infty(\mathcal N)$ of
sheaves of graded algebras over
$\mu_0^*\colon C^\infty(M)\to C^\infty(N)$.

Let $E_1$ and $E_{2}$ be smooth vector bundles of finite ranks
$r_1,r_2$ over $M$ and assign the degree $i$ to the fiber coordinates
of $E_{i}$, for each $i=1,\ldots,n$.  The direct sum
$E=E_{1}\oplus E_{2}$ is a graded vector bundle with grading
concentrated in degrees $1$ and $2$.  The $[2]$-manifold
$E_{1}[-1]\oplus E_{2}[-2]$ has the elements of local frames of
${E_{i}^*}$ as local generators of degree $i$, for $i=1,2$, and so
dimension $(p;r_1,r_2)$.  A $[2]$-manifold
$\mathcal M=E_{1}[-1]\oplus E_{2}[-2]$ defined in this manner by a
graded vector bundle is called a \textbf{split $[2]$-manifold}. In
other words, we have $C^\infty(\mathcal M)^0=C^\infty(M)$,
$C^\infty(\mathcal M)^1=\Gamma(E_{1}^*)$ and
$C^\infty(\mathcal M)^2=\Gamma(E_{2}^*\oplus \wedge^2E_{1}^*)$. A
morphism
$\mu\colon F_{1}[-1]\oplus F_{2}[-2]\to E_{1}[-1]\oplus E_{2}[-2]$ of
split $[2]$-manifolds over the bases $M$ and $N$, respectively,
consists of a smooth map $\mu_0\colon N\to M$, three vector bundle
morphisms $\mu_1\colon F_{1}\to E_{1}$, $\mu_2\colon F_{2}\to E_{2}$
and $\mu_{12}\colon \wedge^2F_{1}\to E_{2}$ over $\mu_0$. The map
$\mu^\star$ sends a degree $1$ function $\xi\in\Gamma(E_{1}^*)$
to\footnote{$\mu_1^\star\xi\in\Gamma(F_1^*)$ is defined by
  $\langle
  (\mu_1^\star\xi)_p,e_p\rangle=\langle\xi(\mu_0(p)),\mu_1(e_p)\rangle$
  for all $e_p\in F_1$.}  ${\mu_1}^\star\xi\in\Gamma(F_{1}^*)$ and a
degree $2$-function $\xi\in\Gamma(E_{2}^*)$ to
${\mu_2}^\star\xi+\mu_{12}^\star\xi\in\Gamma(F_{2}^*\oplus \wedge^2
F_{1}^*)$.

\subsection{Double Lie algebroids and matched pairs of $2$-representations}
\label{matched_pair_2_rep_sec}

We refer to Section 2.2 of \cite{Jotz18b} for the definition of a
double vector bundle, and for the necessary background on their linear
and core sections, and on their linear splittings and dualisations.
Sections 2.3--2.5 of \cite{Jotz18b} recall the definition of a
VB-algebroid, and also the equivalence of $2$-term representations up
to homotopy (called here ``$2$-representations'' for short) with
linear decompositions of VB-algebroids \cite{GrMe10a}. The notation
that we use here is the same as in \cite{Jotz18b}. Of course, we also
refer to \cite{Pradines77,Mackenzie05,GrMe10a} for more details on
double vector bundles and VB-algebroids.

In this section we only recall the correspondence of decompositions of
double Lie algebroids with matched pairs of $2$-representations.

\medskip

If $(D,A,B,M)$ is a VB-algebroid with Lie algebroid structures on
$D\to B$ and $A\to M$, then the dual vector bundle $D\duer B\to B$ has
a \emph{Lie-Poisson structure} (a linear Poisson structure), and the
structure on $D\duer B$ is also Lie-Poisson with respect to
$D\duer B\to C^*$ \cite[3.4]{Mackenzie11}. Dualising this bundle gives
a Lie algebroid structure on $(D\duer B)\duer{C^*}\to C^*$. This
equips the double vector bundle $((D\duer B)\duer{C^*}; C^*,A;M)$ with
a VB-algebroid structure. Using the isomorphism defined by the
non-degenerate pairing
$-\nsp{\cdot}{\cdot}\colon D\duer A\times_{C^*}D\duer B\to \mathbb R$,
(see \cite{Mackenzie05} and \cite[\S 2.2.4]{Jotz18b} for a summary and
our sign convention), the double vector bundle
$(D\duer A\to C^*;A\to M)$ is also a VB-algebroid. In the same manner,
if $(D\to A, B\to M)$ is a VB-algebroid then we use the non-degenerate pairing
$\nsp{\cdot}{\cdot}\colon D\duer A\times_{C^*}D\duer B\to \mathbb R$ to get a VB-algebroid structure on
$(D\duer B\to C^*;B\to M)$.

Let $\Sigma\colon A\times_MB\to D$ be a linear splitting of $D$ and
denote by $(\nabla^B,\nabla^C,R_A)$ the induced 2-representation of
the Lie algebroid $A$ on $\partial_B\colon C\to B$ (see
\cite{GrMe10a}; this is also recalled in Section 2.5 of
\cite{Jotz18b}). The linear splitting $\Sigma$ induces a linear
splitting $\Sigma^\star\colon A\times_M C^*\to D\duer A$ of $D\duer
A$.  The 2-representation of $A$ that is associated to this splitting is
then $({\nabla^C}^*,{\nabla^B}^*,-R_A^*) $ on the complex
$\partial_B^*\colon B^*\to C^*$. This is proved in the appendix of
\cite{DrJoOr15}.

\medskip
A \textbf{double Lie algebroid} \cite{Mackenzie11} is a double vector
bundle $(D,A,B,M)$ with core $C$, and with Lie algebroid
structures on each of $A\to M$, $B\to M$, $D\to A$ and $D\to B$ such
that each pair of parallel Lie algebroids gives $D$ the structure of a
VB-algebroid, and such that
the pair $(D\duer A, D\duer B)$ with the induced Lie algebroid
structures on base $C^*$ and the pairing $\nsp{\cdot}{\cdot}$, is a
Lie bialgebroid.

Consider a double vector bundle $(D;A,B;M)$ with core $C$ and a VB-Lie
algebroid structure on each of its sides.  After the choice of a
splitting $\Sigma\colon A\times_M B\to D$, the Lie algebroid
structures on the two sides of $D$ are described by two
$2$-representations \cite{GrMe10a}.  We prove in \cite{GrJoMaMe18} that
$(D\duer A, D\duer B)$ is a Lie bialgebroid over $C^*$ if and only if,
for any splitting of $D$, the two induced 2-representations form a
matched pair as in the following definition \cite{GrJoMaMe18}.

\begin{definition}\label{matched_pair_2_rep}
  Let $(A\to M, \rho_A, [\cdot\,,\cdot])$ and $(B\to M, \rho_B,
  [\cdot\,,\cdot])$ be two Lie algebroids and assume that $A$ acts on
  $\partial_B\colon C\to B$ up to homotopy via
  $(\nabla^{B},\nabla^{C}, R_{A})$ and $B$ acts on
  $\partial_A\colon C\to A$ up to homotopy via
  $(\nabla^{A},\nabla^{C}, R_{B})$\footnote{For the sake of
    simplicity, we write in this definition $\nabla$ for all the four
    connections. It will always be clear from the indexes which
    connection is meant. We write $\nabla^A$ for the $A$-connection
    induced by $\nabla^{AB}$ and $\nabla^{AC}$ on $\wedge^2 B^*\otimes
    C$ and $\nabla^B$ for the $B$-connection induced on $\wedge^2
    A^*\otimes C$.  }.  Then we say that the two representations up to
  homotopy form a matched pair if
\begin{enumerate}
\item[(m1)]
  $\nabla_{\partial_Ac_1}c_2-\nabla_{\partial_Bc_2}c_1=-(\nabla_{\partial_Ac_2}c_1-\nabla_{\partial_Bc_1}c_2)$,
\item[(m2)] $[a,\partial_Ac]=\partial_A(\nabla_ac)-\nabla_{\partial_Bc}a$,
\item[(m3)] $[b,\partial_Bc]=\partial_B(\nabla_bc)-\nabla_{\partial_Ac}b$,
\item[(m4)]
$\nabla_b\nabla_ac-\nabla_a\nabla_bc-\nabla_{\nabla_ba}c+\nabla_{\nabla_ab}c=
R_{B}(b,\partial_Bc)a-R_{A}(a,\partial_Ac)b$,
\item[(m5)]
  $\partial_A(R_{A}(a_1,a_2)b)=-\nabla_b[a_1,a_2]+[\nabla_ba_1,a_2]+[a_1,\nabla_ba_2]+\nabla_{\nabla_{a_2}b}a_1-\nabla_{\nabla_{a_1}b}a_2$,
\item[(m6)]
  $\partial_B(R_{B}(b_1,b_2)a)=-\nabla_a[b_1,b_2]+[\nabla_ab_1,b_2]+[b_1,\nabla_ab_2]+\nabla_{\nabla_{b_2}a}b_1-\nabla_{\nabla_{b_1}a}b_2$,
\end{enumerate}
for all $a,a_1,a_2\in\Gamma(A)$, $b,b_1,b_2\in\Gamma(B)$ and
$c,c_1,c_2\in\Gamma(C)$, and
\begin{enumerate}\setcounter{enumi}{6}
\item[(m7)] $\dr_{\nabla^A}R_{B}=\dr_{\nabla^B}R_{A}\in \Omega^2(A,
  \wedge^2B^*\otimes C)=\Omega^2(B,\wedge^2 A^*\otimes C)$, where
  $R_{B}$ is seen as an element of $\Omega^1(A, \wedge^2B^*\otimes
  C)$ and $R_{A}$ as an element of $\Omega^1(B, \wedge^2A^*\otimes
  C)$.
\end{enumerate}
\end{definition}

\subsection{The equivalence of $[2]$-manifolds with metric double vector bundles}\label{recall_n}\label{known_eqs} 
We begin by summarising the correspondence found in \cite{Jotz18b}
between double vector bundles endowed with a linear metric, and
$\N$-manifolds of degree $2$. \medskip

A \textbf{metric double vector bundle} is a double vector bundle
$(\mathbb E, Q, B, M)$ with core $Q^*$, equipped with a \textbf{linear symmetric
  non-degenerate pairing $\mathbb E\times_B\mathbb E\to \R$},
i.e.~such that 
\begin{enumerate}
\item $\langle \tau_1^\dagger, \tau_2^\dagger\rangle=0$ for
  $\tau_1,\tau_2\in\Gamma(Q^*)$,
\item $\langle \chi, \tau^\dagger\rangle=q_B^*\langle q,\tau\rangle$
  for $\chi\in\Gamma_B^l(\mathbb E)$ linear over $q\in\Gamma(Q)$ and
  $\tau\in\Gamma(Q^*)$ and
\item $\langle\chi_1,\chi_2\rangle$ is a linear function on $B$ for
  $\chi_1,\chi_2\in \Gamma_B^l(\mathbb E)$.
\end{enumerate}
Note that the \textbf{opposite} $(\overline{\mathbb E},Q,B,M)$ of
a metric double vector bundle $(\mathbb E,B,Q,M)$ is the
metric double vector bundle with
$\langle\cdot\,,\cdot\rangle_{\overline{\mathbb
    E}}=-\langle\cdot\,,\cdot\rangle_{\mathbb E}$.

A linear
splitting $\Sigma\colon Q\times_MB\to \mathbb E$ is said to be
\textbf{Lagrangian} if its image is maximal isotropic in $\mathbb E\to
B$.  The corresponding horizontal lifts
$\sigma_Q\colon\Gamma(Q)\to\Gamma^l_B(\mathbb E)$ and
$\sigma_B\colon\Gamma(B)\to\Gamma^l_Q(\mathbb E)$
  are then also said to be
  \textbf{Lagrangian}. By definition, a horizontal lift $\sigma_Q\colon
\Gamma(Q)\to \Gamma^l_B(\mathbb E)$ is Lagrangian if and only if
$\langle \sigma_Q(q_1), \sigma_Q(q_2)\rangle=0$ for all $q_1,q_2\in
\Gamma(Q)$.
Showing the existence of a Lagrangian splitting of $\mathbb E$ is
relatively easy \cite{Jotz18b}:
Note that a general linear decomposition $\Sigma$ of a metric double vector
bundle defines as follows a section $\Lambda$ of $S^2(Q)\otimes B^*$:
\begin{equation}\label{lambda}
\langle\sigma_Q(q_1),\sigma_Q(q_2)\rangle =\ell_{\Lambda(q_1,q_2)}
\end{equation}
for all $q_1,q_2\in\Gamma(Q)$.
In particular, $\Lambda(q,\cdot)\colon Q\to B^*$ is a morphism of vector
bundles for each $q\in\Gamma(Q)$.  Define a new horizontal lift
$\sigma_Q'\colon\Gamma(Q)\to \Gamma^l_B(\mathbb E)$ by
\begin{equation}\label{new_lift}
\sigma_Q'(q)=\sigma_Q(q)-\frac{1}{2}\widetilde{\Lambda(q,\cdot)^*}
\end{equation} for
all $q\in\Gamma(Q)$. 
It is easy to check that the corresponding linear decomposition $\Sigma'$ is Lagrangian.

Further, if $\Sigma^1$ and
$\Sigma^2\colon Q\times_M B\to \mathbb E$ are Lagrangian, then the
change of splitting $\phi_{12}\in\Gamma(Q^*\otimes Q^*\otimes B^*)$
defined by $\Sigma^2(q,b)=\Sigma^1(q,b)+\widetilde{\phi(q,b)}$ for all
$(q,b)\in Q\times_M B$, is a section of
$Q^*\wedge Q^* \otimes B^*$.

\begin{example}\label{metric_connections}
  
  Let $E\to M$ be a vector bundle endowed
  with a symmetric non-degenerate pairing
  $\langle\cdot\,,\cdot\rangle\colon E\times_M E\to \R$ (a
  \emph{metric vector bundle}).  Then
  $E\simeq E^*$ and the tangent prolongation $TE$ is a metric double vector
  bundle $(TE,E,TM,M)$ with pairing $TE\times_{TM}TE\to \R$ the
  tangent of the pairing $E\times_M E\to \R$. In particular, we have $\langle Te_1, Te_2\rangle_{TE}=\ell_{\dr\langle e_1,e_2\rangle}$, 
$\langle Te_1, e_2^\dagger\rangle_{TE}=p_M^* \langle e_1,e_2\rangle$ and $\langle e_1^\dagger, e_2^\dagger\rangle_{TE}=0$ for
$e_1,e_2\in\Gamma(E)$.

Recall from \cite[Example 3.11]{Jotz18b} that linear splittings of
$TE$ are equivalent to linear connections $\nabla\colon \mx(M)\times\Gamma(E)\to\Gamma(E)$.  The Lagrangian
splittings of $TE$ are exactly the linear splittings that correspond
to \textbf{metric} connections, i.e.~linear connections
$\nabla\colon \mx(M)\times\Gamma(E)\to\Gamma(E)$ that preserve the
metric:
$\langle\nabla_\cdot e_1, e_2\rangle+\langle e_1,\nabla_\cdot
e_2\rangle=\dr\langle e_1, e_2\rangle$ for $e_1,e_2\in\Gamma(E)$.
\end{example}

Let $(\mathbb E, B, Q, M)$ be a metric double vector bundle. Define $\mathcal C(\mathbb
E)\subseteq \Gamma_Q^l(\mathbb E)$ as the $C^\infty(M)$-submodule of 
linear sections with isotropic image in $\mathbb E$.
After the choice of a Lagrangian splitting $\Sigma\colon Q\times_MB\to
\mathbb E$, $\mathcal C(\mathbb E)$ can be written
$\mathcal C(\mathbb
E):=\sigma_B(\Gamma(B))+
\{\tilde\omega\mid \omega\in\Gamma(Q^*\wedge Q^*)\}$.
This shows that $\mathcal
C(\mathbb E)$ together with $\Gamma^c_Q(\mathbb E)\simeq\Gamma(Q^*)$
span $\mathbb E$ as a vector bundle over $Q$.
\medskip

We define\footnote{A metric double vector bundle $(\mathbb E, B,Q,M)$
  is dual (over $Q$) to an \emph{involutive double vector bundle}
  \cite{Jotz18b}. A morphism $\Omega\colon \mathbb F\to\mathbb E$ of
  metric double vector bundles is defined as a relation
  $\Omega\subseteq \overline{\mathbb F}\times\mathbb E$ that is the
  dual of a morphism of involutive double vector bundles
  $\omega\colon \mathbb F\duer P\to \mathbb E\duer Q$
  \cite{Jotz18b}. The characterisation given here is proved in
  \cite{Jotz18b}.} a morphism $\Omega\colon \mathbb F\to\mathbb E$ of
metric double vector bundles as a pair of maps
$\omega^\star \colon \mathcal C(\mathbb E)\to \mathcal C(\mathbb F)$,
$\omega_P^\star \colon \Gamma(Q^*)\to \Gamma(P^*)$ together with a
smooth map $\omega_0 \colon N\to M$ such that
\begin{enumerate}
\item $\omega^\star\left(\widetilde{\tau_1\wedge\tau_2}\right)=\widetilde{\omega_P^\star\tau_1\wedge\omega_P^\star\tau_2}$,
\item
  $\omega^\star(q_Q^*f\cdot\chi)=q_P^*(\omega_0^*f)\cdot\omega^\star(\chi)$
  and
\item $\omega_P^\star(f\cdot \tau)=\omega_0^*f\cdot \omega_P^\star\tau$
\end{enumerate}
for all $\tau,\tau_1,\tau_2\in\Gamma(Q^*)$, $f\in C^\infty(M)$ and
$\chi \in \mathcal C(\mathbb E)$.
A morphism $\Omega\colon
  Q_1\times_{M_1}Q_1\times_{M_1}B_1^*\to
  Q_2\times_{M_2}Q_2\times_{M_2}B_2^* $ of decomposed metric double
  vector bundles is then described by $\omega_Q\colon Q_1\to Q_2$,
  $\omega_{B^*}\colon B_1^*\to B_2^*$ and $\omega_{12}\colon Q_1\wedge
  Q_1\to B_2^*$, all morphisms of vector bundles over a smooth map
  $\omega_0\colon M_1\to M_2$. 
For $b\in\Gamma(B_1)$ the isotropic section $b^l\in\Gamma_{Q_2}^l(
  B_2\times_{M_2}Q_2\times_{M_2}Q_2^*)$,
  $b^l(q_m)=(b(m),q_m,0_m^{Q_2^*})$, is sent by $\omega^\star$ to the
  isotropic section
  $(\omega_B^\star(b))^l+\widetilde{\omega_{12}^\star(b)}\in\Gamma_{Q_1}^l(B_1\times_{M_1}Q_1\times_{M_1}Q_1^*)$.

\medskip

We write $\operatorname{MDVB}$ for the obtained category of metric
double vector bundles.  In \cite{Jotz18b} we established an
equivalence between the category of involutive double vector bundles
and the category of $[2]$-manifolds. We also proved there that there
is a (contravariant) dualisation equivalence of the categry of
involutive double vector bundles with $\operatorname{MDVB}$. This yields the following theorem.

\begin{theorem}[\cite{Jotz18b}]\label{main_crucial}
  There is a (contravariant) equivalence between the category of
  $[2]$-manifolds and the category of metric double vector
  bundles.
\end{theorem}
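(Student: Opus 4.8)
The plan is to assemble the equivalence from two equivalences already established in \cite{Jotz18b} and recalled in the excerpt: the equivalence between the category of $[2]$-manifolds and the category of involutive double vector bundles, and the (contravariant) dualisation equivalence between involutive double vector bundles and $\operatorname{MDVB}$. Composing a covariant equivalence with a contravariant one would produce a contravariant equivalence; so the statement as phrased should be read as asserting that the composition — or more precisely, a suitable twist of it — yields the desired (contravariant) equivalence. Concretely, I would set up the functor $\mathcal M \mapsto \mathbb E(\mathcal M)$ sending a $[2]$-manifold to the metric double vector bundle obtained by first taking the associated involutive double vector bundle and then dualising over the ``$Q$-side'', and conversely $\mathbb E \mapsto \mathcal M(\mathbb E)$ by dualising back and applying the inverse of the first equivalence.

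First I would recall, from the cited sections of \cite{Jotz18b}, the precise form of the two building-block equivalences: on objects, a split $[2]$-manifold $E_1[-1]\oplus E_2[-2]$ corresponds to the decomposed metric double vector bundle $E_1\times_M E_1 \times_M E_2^* $ (with core $E_2$, after dualising), so that $C^\infty(\mathcal M)^1 = \Gamma(E_1^*)$ and $C^\infty(\mathcal M)^2 = \Gamma(E_2^* \oplus \wedge^2 E_1^*)$ match the linear and ``$\mathcal C(\mathbb E)$'' data; on morphisms, a morphism $\mu$ of split $[2]$-manifolds — given by $\mu_0$, $\mu_1$, $\mu_2$, $\mu_{12}$ — corresponds exactly to a morphism $\Omega$ of decomposed metric double vector bundles given by $\omega_0$, $\omega_Q$, $\omega_{B^*}$, $\omega_{12}$, under the dictionary $\mu^\star \leftrightarrow (\omega^\star,\omega_P^\star)$ spelled out just before the statement. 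Since both building blocks are already proved to be equivalences of categories, it remains to check that the composite functor is well-defined on non-split objects and morphisms (using that every $[2]$-manifold admits a splitting, and every metric double vector bundle a Lagrangian splitting — the latter being exactly the construction via $\Lambda$ and \eqref{new_lift} recalled in the excerpt), and that the composite is genuinely an equivalence, i.e.\ essentially surjective and fully faithful. Essential surjectivity is immediate from essential surjectivity of each factor; full faithfulness follows because a contravariant equivalence composed with a covariant one remains fully faithful, the bijections on hom-sets composing.

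The step I expect to be the main obstacle is checking that everything is independent of the chosen splittings — i.e.\ that the composite functor descends from decomposed objects to genuine (non-decomposed) $[2]$-manifolds and metric double vector bundles, and that the change-of-splitting data on the two sides correspond. On the $[2]$-manifold side a change of splitting is controlled by a section of $\wedge^2 E_1^*$ (in degree $2$); on the metric double vector bundle side, by the computation recalled in the excerpt, a change between Lagrangian splittings is governed by $\phi_{12}\in\Gamma(Q^*\wedge Q^* \otimes B^*)$ — and one must verify these transformation rules match under dualisation. This is where the Lagrangian (as opposed to merely linear) condition is essential: only for Lagrangian splittings does the change-of-splitting cocycle land in $Q^*\wedge Q^*\otimes B^*$ rather than the larger $Q^*\otimes Q^*\otimes B^*$, which is precisely what is needed to land in the degree-$2$ part of $C^\infty(\mathcal M)$. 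I would handle this by invoking the compatibility already verified in \cite{Jotz18b} for the two separate equivalences and simply noting that it is preserved under composition; the genuinely new content here is only the observation that the two known equivalences compose to give Theorem \ref{main_crucial}, so the proof is essentially a one-line deduction once the dictionary is in place. I would close by remarking that the opposite metric double vector bundle $\overline{\mathbb E}$ corresponds, under this equivalence, to a sign change that can be tracked explicitly, which will be used later when discussing self-duality.
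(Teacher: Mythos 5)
Your proposal follows essentially the same route as the paper, which obtains Theorem \ref{main_crucial} simply by composing the equivalence of $[2]$-manifolds with involutive double vector bundles and the contravariant dualisation equivalence of the latter with $\operatorname{MDVB}$, both already established in \cite{Jotz18b}; the splitting-independence and change-of-splitting compatibilities you flag are part of those cited results, so the deduction is indeed the short composition argument. Only a small bookkeeping slip: the decomposed metric double vector bundle attached to $E_1[-1]\oplus E_2[-2]$ is $E_1\times_M E_2^*\times_M E_1^*$ with sides $E_2^*$ and $E_1$ and core $E_1^*$ (i.e.\ $Q\times_M B\times_M Q^*$ with core $Q^*$ for $Q=E_1$, $B=E_2^*$), not $E_1\times_M E_1\times_M E_2^*$ with core $E_2$.
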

 This equivalence establishes in
particular an equivalence between split $[2]$-manifold
$\mathcal M=Q[-1]\oplus B^*[-2]$ and the decomposed metric double
vector bundle $(Q\times_M B\times_MQ^*,B,Q,M)$ with the obvious linear
metric over $B$.

We quickly describe the functors between the two categories.  To
construct the geometrisation functor
$\mathcal G\colon [2]\rm{-Man}\to \operatorname{MDVB}$, take a
$[2]$-manifold and considers its local trivialisations.  Changes of
local trivialisation define a set of cocycle conditions, that
correspond exactly to cocycle conditions for a double vector bundle
atlas. The local trivialisations can hence be collated to a double
vector bundle, which naturally inherits an involution. 
See \cite{Jotz18b} for more details, and remark that this
construction is as simple as the construction of a vector
bundle from a locally free and finitely generated sheaf of
$C^\infty(M)$-modules.
Conversely, the algebraisation functor $\mathcal A$ sends a metric
double vector bundle $\mathbb E$ to the $[2]$-manifold defined as
follows: the functions of degree $1$ are the sections of
$\Gamma_Q^c(\mathbb E)\simeq\Gamma(Q^*)$, and the functions of degree
$2$ are the elements of $\mathcal C(\mathbb E)$. The multiplication of
two core sections $\tau_1,\tau_2\in\Gamma(Q^*)$ is the core-linear
section $\widetilde{\tau_1\wedge\tau_2}\in\mathcal C(\mathbb E)$.

\subsubsection{Metric VB-algebroids and Poisson [2]-manifolds}
\label{subsect:VBa}\label{metricVBA}
The correspondence above of split $2$-manifolds with decomposed metric
double vector bundles induces a correspondence of split Poisson
$[2]$-manifolds with decomposed metric VB-algebroids. This
bijection extends to an equivalence of Poisson $[2]$-manifolds with
metric VB-algebroids \cite{Jotz18b}, but we only need the split
objects here.

\medskip
Consider a metric double vector bundle $(\mathbb E, B, Q,M)$ with a
linear Lie algebroid on $\mathbb E\to Q$ over a Lie algebroid
structure on $B\to M$.  Then $\mathbb E$ is a \textbf{metric
  VB-algebroid} \cite{Jotz18b} if  the bijection
\begin{equation}\label{beta}
\begin{xy}
\xymatrix{\mathbb{E}\ar[rr]^{\pi_B}\ar[dd]_{\pi_Q}&&B\ar[dd]^{q_B}&
& &\mathbb{E}\duer B\ar[rr]\ar[dd]&&B\ar[dd]\\
&Q^*\ar[dr]&   &\ar[r]^{\Beta}& &&Q^*\ar[dr]&\\
Q\ar[rr]_{q_Q}&&M&&&Q^{**}\simeq Q\ar[rr]&&M }
\end{xy}
\end{equation}
defined by the pairing is a morphism of VB-algebroids, where
$\mathbb E\duer B\to Q^{**}$ is equipped with the dual Lie algebroid
structure to the one on $\mathbb E\to Q$ \cite{Jotz18b}.

Take a decomposed metric double vector bundle
$\mathbb E=Q\times_MB\times_MQ^*$ with a linear Lie algebroid structure on
$Q\times_MB\times_MQ^*\to Q$ over a Lie algebroid $B\to M$. Let
$(\nabla^Q,\nabla^{Q^*},R)$ be the corresponding 2-representation of the Lie
algebroid $B$ on $\partial_Q\colon Q^*\to Q$.  Then
$Q\times_MB\times_MQ^*$ is a \textbf{decomposed metric VB-algebroid}
if and only if the 2-representation is dual to itself \cite{Jotz18b}:
\[\left(\nabla^Q\right)^*=\nabla^{Q^*} \quad \partial_Q^*=\partial_Q,
  \quad R(b_1,b_2)^*=-R(b_1,b_2) \quad \forall b_1,b_2\in\Gamma(B).
\]
That is, the dual decomposed VB-algebroid $\mathbb E\duer
B=Q^{**}\times_MB\times_MQ^*\to Q^{**}$ is canonically isomorphic to
$\mathbb E\to Q$ via the canonical isomorphism of $Q$ with $Q^{**}$
\cite{Jotz18b}.
Note that $R(b_1,b_2)^*=-R(b_1,b_2)$ for all $b_1,b_2\in\Gamma(B)$ is
equivalent to $R\in\Omega^2(B,Q^*\wedge Q^*)$.
\medskip

Note that a Poisson bracket of degree $-2$ on a $[2]$-manifold
$\mathcal M$ is an $\R$-bilinear map $\{\cdot\,,\cdot\}\colon
C^\infty(\mathcal M)\times C^\infty(\mathcal M)\to C^\infty(\mathcal
M)$ of the graded sheaves of functions, such that\linebreak $|\{\xi,\eta\}|=|\xi|+|\eta|-2$
for homogeneous elements $\xi,\eta\in C^\infty_{\mathcal M}(U)$. The
bracket is graded skew-symmetric;
$\{\xi,\eta\}=-(-1)^{|\xi|\,|\eta|}\{\eta,\xi\}$ and satisfies the
graded Leibniz and Jacobi identities
 \begin{equation}\label{graded_leibniz}
\{\xi_1,\xi_2\cdot\xi_3\}=\{\xi_1,\xi_2\}\cdot\xi_3+(-1)^{|\xi_1|\,|\xi_2|}\xi_2\cdot\{\xi_1,\xi_3\}
\end{equation}
and
\begin{equation}\label{graded_jacobi}
\{\xi_1,\{\xi_2,\xi_3\}\}=\{\{\xi_1,\xi_2\},\xi_3\}+(-1)^{|\xi_1|\,|\xi_2|}\{\xi_2,\{\xi_1,\xi_3\}\}
\end{equation}
for homogeneous $\xi_1,\xi_2,\xi_3\in C^\infty_{\mathcal M}(U)$.
A morphism $\mu\colon\mathcal N\to\mathcal M$ of Poisson
$[2]$-manifolds
satisfies $\mu^\star\{\xi_1,\xi_2\}=\{\mu^\star\xi_1,\mu^\star\xi_2\}$
for all $\xi_1,\xi_2\in C^\infty_{\mathcal M}(U)$, $U$ open in $M$.

Via the identification of the underlying metric double vector bundle
$\mathbb E=Q\times_MB\times_MQ^*$ 
with the $[2]$-manifold $Q[-1]\oplus B^*[-2]$, the metric VB-algebroid
structure is equivalent to a degree $-2$ Poisson structure on
$Q[-1]\oplus B^*[-2]$ (see \cite{Jotz18b}):
\begin{equation}\label{2-poisson}
\begin{split}
\{f_1,f_2\}&=0, \quad \{\tau,f\}=0,\quad
\{\tau_1,\tau_2\}=\langle \partial_Q\tau_1,\tau_2\rangle, \\
\{b,f\}&=\rho_B(b)(f),\quad \{b,\tau\}=\nabla^*_b\tau,\quad \{b_1,b_2\}=[b_1,b_2]-R(b_1,b_2).
\end{split}
\end{equation}
This identification is compatible with changes of splittings of the
$[2]$-manifolds and changes of decomposition of metric
VB-algebroids: The category of Poisson $[2]$-manifolds is equivalent
to the category of metric VB-algebroids \cite{Jotz18b}.

\begin{example}\label{self-dual-metric-conn}
Consider a metric vector bundle $E\to M$ and a metric connection 
$\nabla\colon\mx(M)\times\Gamma(E)\to\Gamma(E)$.
Since $\nabla=\nabla^*$ when $E^*$ is identified with $E$ via the
non-degenerate pairing, the $2$-representation $(\Id_E\colon E\to E,
\nabla, \nabla, R_\nabla)$ is self-dual.
The metric VB-algebroid structure on $TE\to E$ is just the standard
Lie algebroid structure on the tangent bundle of $E$. See
\cite{Jotz18b} for more details.
\end{example}

\subsubsection{VB-Courant algebroids and Lie 2-algebroids}\label{split_lie_2_rep}
A \textbf{VB-Courant algebroid} \cite{Li-Bland12} is a metric double
vector bundle $(\mathbb E\to B, \langle\cdot\,,\cdot\rangle)$ with
side $Q$ and core $Q^*$, together with a linear anchor
$\Theta\colon\mathbb E\to TB$ and a linear Courant algebroid bracket
on sections of $\mathbb E\to B$.

A \textbf{decomposed VB-Courant algebroid} can be described as
\cite{Jotz17b}
a decomposed metric VB-algebroid
$\mathbb E=Q\times_M B\times_M Q^*\to B$ together with an anchor $\rho_Q\colon Q\to M$,
 a vector bundle map $\partial_B\colon Q^*\to B$,
a dull bracket $\lb\cdot\,,\cdot\rb\colon
  \Gamma(Q)\times\Gamma(Q)\to\Gamma(Q)$,
a linear connection $\nabla\colon\Gamma(Q)\times\Gamma(B)\to\Gamma(B)$ and 
a vector valued $3$-form $\omega\in \Omega^3(Q,B^*)$,
such that \cite{Jotz17b}
\begin{enumerate}
\item[(i)] $\nabla_{\partial_B^*\beta_1}^*\beta_2+\nabla_{\partial_B^*\beta_2}^*\beta_1=0$ for
  all $\beta_1,\beta_2\in\Gamma(B^*)$,
\item[(ii)] $\lb q, \partial_B^*\beta\rb=\partial_B^*(\nabla_q^*\beta)$ for
  $q\in\Gamma(Q)$ and $\beta\in\Gamma(B^*)$,
\item [(iii)]$\Jac_{\lb \cdot,\cdot\rb}=\partial_B^*\circ \omega\in \Omega^3(Q,Q)$,
\item[(iv)]
  $R_{\nabla}(q_1,q_2)b=\partial_B\langle\ip{q_2}\ip{q_1}\omega, b\rangle$
  for $q\in\Gamma(Q)$ and $\beta\in\Gamma(B^*)$, and 
\item[(v)] $\dr_{\nabla^*}\omega=0$.
\end{enumerate}
The equation
\begin{equation}\label{rho_delta}
\rho_Q\circ \partial_B^*=0
\end{equation}
follows easily from \eqref{leibniz} and (ii), and (ii) is equivalent
to 
\begin{equation}\label{D1}
\partial_B(\Delta_q\tau)=\nabla_q(\partial_B\tau)
\end{equation}
for all $q\in\Gamma(Q)$ and $\tau\in\Gamma(Q^*)$.
Further, \eqref{Jac_Dorfman}
and (iii) yield together
\begin{equation}\label{omega_dorfman_curv}
\langle \ip{q_2}\ip{q_2}\omega,\partial_B\tau\rangle=R_\Delta(q_1,q_2)\tau
\end{equation}
for $q_1,q_2\in\Gamma(Q)$ and $\tau\in\Gamma(Q^*)$.
The linear Courant algebroid structure on $Q\times_MB\times_MQ^*\to B$
is given by the anchor $\Theta\colon \mathbb E\to TB$ defined by 
\[\Theta(q,0)=\widehat{\nabla_q}\in\mx^l(B), \quad \Theta(\tau^\dagger)=(\partial_B\tau)^\uparrow\in\mx^c(B),
\]
and the bracket defined by 
$\left\lb \tau_1^\dagger, \tau_2^\dagger\right\rb=0,$ 
$\left\lb (q,0), \tau^\dagger\right\rb=\Delta_q\tau^\dagger
$
where $\Delta$ is the Dorfman
connection that is dual to the dull bracket, and 
$\left\lb (q_1,0), (q_2, 0)\right\rb=(\lb q_1, q_2\rb, -\ip{q_2}\ip{q_1}\omega)$.
for all $q,q_1,q_2\in\Gamma(Q)$ and all
$\tau,\tau_1,\tau_2\in\Gamma(Q^*)$.

 \begin{example}\label{tangent_courant_double}\cite{Jotz17b}
We consider here a Courant algebroid $(\mathsf
E\to M,\rho,\lb\cdot\,,\cdot\rb,\langle \cdot\,,\cdot\rangle)$. We use the
pairing to identify
$\mathsf E$ with $\mathsf E^*$.
After the choice of a metric connection on $\mathsf E$
and so of a Lagrangian decomposition $I_\nabla\colon T\mathsf E\to
\mathsf E\times_MTM\times_M\mathsf E$ (see Example
\ref{metric_connections}), the VB-Courant
algebroid structure on $(T\mathsf E\to TM, \mathsf E\to M)$ is
described by $\partial_{TM}=\rho\colon \mathsf E\to TM$, 
the Dorfman connection $\Delta^{\rm bas}\colon
\Gamma(\mathsf E)\times\Gamma(\mathsf E)\to\Gamma(\mathsf E)$,
\[\Delta^{\rm bas}_ee'=\lb e,e'\rb+\nabla_{\rho(e')}e,\]
which we call the \emph{basic Dorfman connection associated to
  $\nabla$}. The dual dull bracket is given by
\begin{equation}\label{dull_corretion}
\lb e,e'\rb_{\Delta^{\rm bas}}=\lb e,e'\rb-\rho^*\langle\nabla_\cdot
e,e'\rangle
\end{equation}
for all $e,e'\in\Gamma(\mathsf E)$.  The linear
connection is 
$\nabla^{\rm bas}\colon\Gamma(\mathsf E)\times\mx(M)\to\mx(M)$,
\[\nabla^{\rm bas}_eX=[\rho(e),X]+\rho(\nabla_{X}e).\]
The \emph{basic curvature} $\omega_\Delta^{\rm
  bas}\in\Omega^3(\mathsf E,T^* M)$ is defined by
\begin{align}
  \omega_\Delta^{\rm bas}(e_1,e_2,\cdot)X=& -\nabla_X\lb e_1,e_2\rb+\lb
  \nabla_Xe_1,e_2\rb+\lb e_1,\nabla_Xe_2\rb \label{def_R_nabla_bas}\\
&+\nabla_{\nabla^{\rm
      bas}_{e_2}X}e_1-\nabla_{\nabla^{\rm
      bas}_{e_1}X}e_2-\beta\inv\langle \nabla_{\nabla^{\rm bas}_\cdot
    X}e_1,e_2\rangle\in\Gamma(\mathsf E)\nonumber
\end{align}
for all $e_1,e_2\in\Gamma(\mathsf E)$ and $X\in\mx(M)$. 
\end{example}
\medskip

A
homological vector field $\mathcal Q$ on a positively graded manifold
$\mathcal M$ is a derivation of degree $1$ of
$C^\infty(\mathcal M)$ such that
$\mathcal Q^2=\frac{1}{2}[\mathcal Q,\mathcal Q]$ vanishes
\cite{Roytenberg99}. If the graded manifold is a $[2]$-manifold, then the
pair $(\mathcal M,\mathcal Q)$ is a \emph{Lie 2-algebroid} \cite{ShZh17}.  

Consider the split $[2]$-manifold $Q[-1]\oplus B^*[-2]$ corresponding
to the underlying decomposed metric double vector bundle
$Q\times_MB\times_MQ^*$. The linear Courant algebroid structure
defines as follows a homological vector field $\mathcal Q$ on
$C^\infty(Q[-1|\oplus B^*[-2])$ (see \cite{Jotz17b}):
\begin{equation}\label{Q1}
\mathcal Q(f)=\rho_Q^*\dr f\in \Gamma(Q^*)
\end{equation}
for $f\in C^\infty(M)$, 
\begin{equation}\label{Q2}
\mathcal Q(\tau)=\dr_Q\tau+\partial_B\tau \in \Omega^2(Q)\oplus\Gamma(B)
\end{equation}
for $\tau\in\Gamma(Q^*)$ 
and 
\begin{equation}\label{Q3}
\mathcal Q(b)=\dr_\nabla b-\langle \omega,b\rangle\in \Omega^1(Q,B)\oplus\Omega^3(Q).
\end{equation}
for $b\in\Gamma(B)$.  
The tuple
$(\partial_B^*\colon B^*\to Q,
\rho_Q,\lb\cdot\,,\cdot\rb,\nabla,\omega)$,
is a \textbf{split Lie 2-algebroid} \cite{ShZh17, Jotz17b}.
Note that conversely any homological vector field on
$\mathcal M=Q[-1]\oplus B^*[-2]$ defines as in \eqref{Q1}, \eqref{Q2}
and \eqref{Q3} a
split Lie $2$-algebroid

 The
category of Lie 2-algebroid is equivalent via the correspondence
described above to the category of VB-Courant algebroids
\cite{Li-Bland12,Jotz17b}. Note that a morphism
$\mu\colon (\mathcal M_1,\mathcal Q_1)\to (\mathcal M_2,\mathcal Q_2)$
of Lie 2-algebroids is a morphism
$\mu\colon \mathcal M_1\to \mathcal M_2$ of the underlying
$[2]$-manifolds, such that
\begin{equation}\label{morphism_of_Q}
\mu^\star\circ\mathcal Q_2=\mathcal Q_1\circ\mu^\star\colon
C^\infty(\mathcal M_2)\to C^\infty(\mathcal M_1).
\end{equation}
We refer to \S 3.5 of \cite{Jotz17b} for the characterisation of a
morphism of split Lie 2-algebroids in terms of its components
$(\partial_B^*\colon B^*\to Q,
\rho_Q,\lb\cdot\,,\cdot\rb,\nabla,\omega)$.

\section{LA-Courant algebroids vs Poisson Lie 2-algebroids}\label{sec:LA-Cour}
In this section, we prove that a split Poisson Lie 2-algebroid is
equivalent to the \emph{matched pair} of a split Lie 2-algebroid with a
self-dual 2-representation.

Take a double vector bundle $(\mathbb E, B,Q,M)$
with core $Q^*$, with a VB-Lie algebroid structure on
$(\mathbb E\to Q, B\to M)$ and a VB-Courant algebroid structure on
$(\mathbb E\to B, Q\to M)$. In this section we show that the double
vector bundle is an \emph{LA-Courant algebroid} \cite{Li-Bland12} if
and only if the VB-algebroid is metric and the self-dual
2-representation defined by any Lagrangian decomposition of
$\mathbb E$ and the VB-algebroid side forms a \emph{matched pair} with
the split Lie 2-algebroid describing the Courant algebroid side. 

We begin with the following definition.
\begin{definition}\label{matched_pairs}
  Let $(B\to M, \rho_B, [\cdot\,,\cdot])$ be a Lie algebroid and
  let $(Q\to M, \rho_Q)$ be an anchored vector bundle.  Assume that $B$ acts
  on $\partial_Q\colon Q^*\to Q$ up to homotopy via
  a self-dual 2-representation $(\nabla,\nabla^*, R)$, and let
  $(\partial_B\colon Q^*\to B,
  \lb\cdot\,,\cdot\rb,\nabla, \omega)$\footnote{For the sake of simplicity, we write
    in this definition $\nabla$ for two different connections,
    unless it is not clear from the indexes which connection is
    meant.} be a split Lie 2-algebroid.  Then we say that the
  2-representation and the split Lie 2-algebroid form a matched
  pair if
  \begin{enumerate}
\item[(M1)] $\partial_Q(\Delta_q\tau)=\nabla_{\partial_B\tau}q+\lb
  q, \partial_Q\tau\rb+\partial_B^*\langle \tau, \nabla_\cdot q\rangle$,
\item[(M2)] $\partial_B(\nabla_b^*\tau)=[b,\partial_B\tau]+\nabla_{\partial_Q\tau}b$,
\item[(M3)]
  $\partial_BR(b_1,b_2)q=-\nabla_q[b_1,b_2]+[\nabla_qb_1,b_2]+[b_1,\nabla_qb_2]+\nabla_{\nabla_{b_2}q}b_1-\nabla_{\nabla_{b_1}q}b_2$,
\item[(M4)] $\partial_Q\langle\ip{q_2}\ip{q_1}\omega,b\rangle=-\nabla_b\lb q_1,q_2\rb+\lb q_1,
  \nabla_{b}q_2\rb+\lb \nabla_bq_1,
  q_2\rb+\nabla_{\nabla_{q_2}b}q_1-\nabla_{\nabla_{q_1}b}q_2+\partial_B^*\langle
  R(\cdot, b)q_1, q_2\rangle$.
\item[(M5)] $\dr_{\nabla^Q}\omega=\dr_{\nabla^B} R\in \Omega^2(B,\wedge^3Q^*)\simeq\Omega^3(Q,\wedge^2B^*)$, where 
$\omega$ is seen as an element of $\Omega^1(B,\wedge^3Q^*)$ and $R$ is
understood as an element of $\Omega^2(Q,\wedge^2B^*)$.
\end{enumerate}
\end{definition}

\begin{remark}\label{remark_simplifications}
\begin{enumerate}
\item (M5) is \begin{equation*}
\begin{split}
  &\nabla_{b_2}^*\langle\ip{q_2}\ip{q_1}\omega,b_1\rangle-\nabla_{b_1}^*\langle\ip{q_2}\ip{q_1}\omega,b_2\rangle+\langle\ip{q_2}\ip{q_1}\omega,[b_1,b_2]\rangle\\
  &+\langle\ip{q_2}\ip{\nabla_{b_1}q_1}\omega,b_2\rangle+\langle\ip{\nabla_{b_1}q_2}\ip{q_1}\omega,b_2\rangle-\langle\ip{q_2}\ip{\nabla_{b_2}q_1}\omega,b_1\rangle-\langle\ip{\nabla_{b_2}q_2}\ip{q_1}\omega,b_1\rangle\\
  &+\Delta_{q_1}R(b_1,b_2)q_2-\Delta_{q_2}R(b_1,b_2)q_1-R(b_1,b_2)\lb q_1,q_2\rb\\
  &-R(\nabla_{q_1}b_1,b_2)q_2-R(b_1,\nabla_{q_1}b_2)q_2+R(\nabla_{q_2}b_1,b_2)q_1+R(b_1,\nabla_{q_2}b_2)q_1\\
  =\,&\langle (R(b_1,\nabla_\cdot b_2)+R(\nabla_\cdot b_1, b_2))q_1,q_2\rangle -\rho_Q^*\dr\langle R(b_1,b_2)q_1, q_2\rangle
\end{split} 
\end{equation*}
for all $q_1,q_2\in \Gamma(Q)$ and $b_1,b_2\in\Gamma(B)$.
\item The equality $\rho_Q\circ \partial_Q=\rho_B\circ\partial_B$ follows from (M1)
if $Q$ has positive rank, and from (M2) if $B$ has positive rank. If both $Q$ and $B$ have rank zero, then
$\rho_Q\circ \partial_Q=\rho_B\circ\partial_B$ is trivially satisfied.
\item The equation 
\begin{equation}\label{mixed_anchors}
[\rho_Q(q), \rho_B(b)]=\rho_B(\nabla_qb)-\rho_Q(\nabla_bq)
\end{equation} follows 
from (M3) if $B$ has positive rank, and from (M4) if $Q$ has positive rank.
If both $Q$ and $B$ have rank zero, then it is trivially satisfied.
\item If $\rho_Q\circ \partial_Q=\rho_B\circ\partial_B$, then (M1) is equivalent to
\begin{equation}\label{almost_C}
\left(\Delta_{\partial_Q\tau_1}\tau_2-\nabla^*_{\partial_B\tau_2}\tau_1\right)
+\left(\Delta_{\partial_Q\tau_2}\tau_1-\nabla^*_{\partial_B\tau_1}\tau_2\right)
=\rho_Q^*\dr\langle \tau_1, \partial_Q\tau_2\rangle
\end{equation}
for all $\tau_1,\tau_2\in\Gamma(Q^*)$.
\item Assuming
  $[\rho_Q(q), \rho_B(b)]=\rho_B(\nabla_qb)-\rho_Q(\nabla_bq)$ for all
  $b\in\Gamma(B)$ and $q\in\Gamma(Q)$,  (M4) is equivalent
  to
\begin{equation}\label{(LC10)}
\langle\ip{\partial_Q\tau}\ip{q}\omega,b\rangle-R(b,\partial_B\tau)q=\Delta_q\nabla^*_b\tau-\nabla^*_b\Delta_q\tau
+\Delta_{\nabla_bq}\tau-\nabla^*_{\nabla_qb}\tau-\langle\nabla_{\nabla_\cdot  b}q, \tau\rangle
\end{equation}
for all $b\in\Gamma(B)$, $q\in\Gamma(Q)$ and $\tau\in\Gamma(Q^*)$.
\end{enumerate}
\end{remark}

\subsection{Poisson Lie 2-algebroids via matched pairs.}
We begin this subsection with the definition of a Poisson Lie
2-algebroid.
\begin{definition}\label{def_Poisson_lie_2}
  Let $\mathcal M$ be a Poisson $[2]$-manifold with algebra of
  functions $C^\infty(\mathcal M)$ and degree $-2$ Poisson bracket
  $\{\cdot\,,\cdot\}$. Assume that $\mathcal M$ has in addition a Lie
  2-algebroid structure, i.e.~that it is endowed with a homological
  vector field $\mathcal Q$. Then
  $(\mathcal M, \mathcal Q, \{\cdot\,,\cdot\})$ is a \textbf{Poisson
    Lie 2-algebroid} if the homological vector field preserves the
  Poisson structure, i.e.~ if
  \begin{equation}\label{Q_preserves_bracket}
\mathcal Q\{\xi_1,\xi_2\}=\{\mathcal Q(\xi_1), \xi_2\}+(-1)^{\deg\xi_1}\{\xi_1,\mathcal Q(\xi_2)\}
\end{equation} for all
$\xi_1,\xi_2\in\mathcal A$.

A morphism of Poisson Lie 2-algebroids is a morphism of the underlying
$[2]$-manifold that is a morphism of Poisson $[2]$-manifolds
\emph{and} a morphism of Lie 2-algebroids.
\end{definition}

The main theorem of this section shows that matched pairs as in
Definition \ref{matched_pairs} are equivalent to split Poisson Lie
2-algebroids.
\begin{theorem}\label{Poisson_Lie_2_char}
  Let $\mathcal M=Q[-1]\oplus B^*[-2]$ be a split $[2]$-manifold
  endowed with a homological vector field $\mathcal Q$ and a Poisson
  bracket $\{\cdot\,,\cdot\}$ of degree $-2$. Let
  $(\partial_B\colon Q^*\to B,\lb\cdot\,,\cdot\rb,\nabla,\omega)$ be
  the components of $\mathcal Q$, and let
  $(\partial_Q\colon Q^*\to Q, \nabla^*, \nabla, R)$ be the self-dual
  $2$-representation of $B$ that is equivalent to the Poisson bracket.

  Then $(\mathcal M, \mathcal Q, \{\cdot\,,\cdot\})$ is a Poisson Lie
  2-algebroid if and only if the self dual 2-representation and the
  split Lie 2-algebroid form a matched pair.
\end{theorem}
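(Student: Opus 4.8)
The plan is to unwind the condition \eqref{Q_preserves_bracket} by testing it on all homogeneous pairs of generators of $C^\infty(\mathcal M)=C^\infty(M)\oplus\Gamma(Q^*)\oplus\Gamma(B^*\oplus\wedge^2Q^*)$, using the explicit formulas \eqref{2-poisson} for the Poisson bracket and \eqref{Q1}--\eqref{Q3} for the homological vector field. Since $\mathcal Q$ is a derivation and $\{\cdot,\cdot\}$ satisfies the graded Leibniz rule \eqref{graded_leibniz}, it suffices to check \eqref{Q_preserves_bracket} on pairs $(\xi_1,\xi_2)$ where each $\xi_i$ is a generator: a function $f\in C^\infty(M)$, a degree-one section $\tau\in\Gamma(Q^*)$, or a degree-two section $b\in\Gamma(B^*)$ (the $\wedge^2Q^*$-part of degree two is then handled automatically by the Leibniz rules for both operations). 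This produces a finite list of identities, organised by the total degree of the pair. First I would dispose of the low-degree pairs $(f_1,f_2)$, $(f,\tau)$, $(\tau_1,\tau_2)$, $(f,b)$: for these the brackets are either zero or land in degree $\le 1$, and the resulting equations are either trivial or encode $\rho_Q\circ\partial_Q=\rho_B\circ\partial_B$ together with \eqref{mixed_anchors} and the compatibility \eqref{dorfman_on_exact}-type identities already available from the split Lie 2-algebroid axioms; these are exactly the ``automatic'' consequences recorded in Remark \ref{remark_simplifications}(2)--(3).

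Next I would treat the genuinely informative pairs. The pair $(\tau,b)$ with $\tau\in\Gamma(Q^*)$ of degree one and $b\in\Gamma(B^*)$ of degree two: here $\{\tau,b\}=-\nabla_b^*\tau$ lives in degree one, and expanding both sides of \eqref{Q_preserves_bracket} using $\mathcal Q(\tau)=\dr_Q\tau+\partial_B\tau$ and $\mathcal Q(b)=\dr_\nabla b-\langle\omega,b\rangle$ splits, according to the target degree, into the equation (M2) (the $\Gamma(B)$-component, after pairing against sections of $B$) and into (M1) rewritten as \eqref{almost_C} or its dual form (the $\Omega^1(Q)$-component, pairing against sections of $Q$ via $\partial_Q$). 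The pair $(\tau_1,b)$ can be cross-checked similarly, or one identifies (M1) directly from the $(\tau,\tau')$-versus-$b$ bookkeeping using that $\{\tau_1,\tau_2\}=\langle\partial_Q\tau_1,\tau_2\rangle$ is a function. The pair $(b_1,b_2)$ of two degree-two generators: $\{b_1,b_2\}=[b_1,b_2]-R(b_1,b_2)$ has a degree-two part $[b_1,b_2]\in\Gamma(B)$ and a degree-two part $-R(b_1,b_2)\in\Gamma(\wedge^2Q^*)$, and applying $\mathcal Q$ to it and to $\{\mathcal Q b_1,b_2\}+\{b_1,\mathcal Q b_2\}$ and sorting by target degree yields (M3) (the $\Gamma(B)$-valued component, from the $\dr_\nabla b$ parts interacting with $[\cdot,\cdot]$ and with $R$ through $\{b,\tau\}=\nabla_b^*\tau$), then (M4) (the $\Gamma(Q)$- or $\Omega^1$-valued component, where the $\langle\omega,b\rangle$ terms of $\mathcal Q(b)$ enter), and finally (M5) (the top, $\Omega^3(Q)\otimes$ or $\wedge^2B^*\otimes\wedge^3Q^*$-type component comparing $\dr_{\nabla^Q}\omega$ with $\dr_{\nabla^B}R$). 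Throughout, $\mathcal Q^2=0$ and the 2-representation/split-Lie-2-algebroid axioms are used to simplify and to absorb curvature terms.

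For the converse, I would run the same computation backwards: assuming (M1)--(M5) together with the structural identities they imply (Remark \ref{remark_simplifications}(2)--(3)), the degree-by-degree verification of \eqref{Q_preserves_bracket} on all pairs of generators holds by exactly the matching just described, and then the graded Leibniz rules for $\mathcal Q$ and for $\{\cdot,\cdot\}$ propagate it to arbitrary elements of $C^\infty(\mathcal M)$. One subtlety to flag: \eqref{Q_preserves_bracket} is an identity between sections, so one must check $C^\infty(M)$-linearity (tensoriality) of each side in the appropriate slots before reducing to generators over a frame, but this is routine given the Leibniz identities already recorded for $\Delta$, $\nabla$, $\lb\cdot,\cdot\rb$ and the anchors.

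The main obstacle I expect is purely organisational rather than conceptual: the pair $(b_1,b_2)$ produces a long expression whose decomposition into the $\Gamma(B)$-, $\Gamma(Q)$- and top-degree components must be done carefully, and matching the top component against (M5) requires recognising the Koszul differentials $\dr_{\nabla^B}R$ and $\dr_{\nabla^Q}\omega$ inside a sum of raw terms — this is precisely the computation whose expanded form is recorded in Remark \ref{remark_simplifications}(1). Keeping track of signs coming from the $(-1)^{\deg\xi_1}$ in \eqref{Q_preserves_bracket} and from the graded skew-symmetry of $\{\cdot,\cdot\}$, and correctly using that a degree-two function has two components (in $\Gamma(B^*)$ and in $\Gamma(\wedge^2Q^*)$) each of which must be tracked separately, is where essentially all the work lies; the identification of the resulting equations with (M1)--(M5) is then immediate.
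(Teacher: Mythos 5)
Your overall strategy is exactly the paper's: test \eqref{Q_preserves_bracket} on pairs of generators of $C^\infty(\mathcal M)$, using \eqref{Q1}--\eqref{Q3} and \eqref{2-poisson}, sort each resulting identity by the degree of its homogeneous components, and identify the outcome with (M1)--(M5) (plus the anchor identities of Remark \ref{remark_simplifications}). However, your bookkeeping of which pair produces which equation is wrong in several places, and as stated some of the announced steps would fail. First, the pair $(\tau_1,\tau_2)$ is not disposable: $\{\tau_1,\tau_2\}=\langle\partial_Q\tau_1,\tau_2\rangle$ is a function, and applying $\mathcal Q$ versus $\{\mathcal Q\tau_1,\tau_2\}+\{\mathcal Q\tau_2,\tau_1\}$ yields precisely \eqref{almost_C}, which together with $\rho_B\circ\partial_B=\rho_Q\circ\partial_Q$ \emph{is} (M1); listing it among the ``trivial or automatic'' low-degree pairs would leave (M1) underived. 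Second, the mixed pair $(b,\tau)$ (with $b$ a degree-two generator, i.e.\ a section of $B$, not of $B^*$) produces, after splitting into its $\Gamma(B)$- and $\Omega^2(Q)$-components, exactly (M2) and \eqref{Omega2Q-part}; the latter becomes (M4) once \eqref{mixed_anchors} is available from the $(b,f)$ pair. It does not produce (M1), as you claim. Third, the pair $(b_1,b_2)$ cannot produce three equations: $\{b_1,b_2\}=[b_1,b_2]-R(b_1,b_2)$ has degree $2$, so $\mathcal Q\{b_1,b_2\}$ and the right-hand side live in the degree-$3$ part $\Omega^1(Q,B)\oplus\Omega^3(Q)$, which has only two homogeneous components; these give (M3) and (M5) respectively, and there is no $\Gamma(Q)$-valued component carrying (M4).

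These are not failures of the method -- carried out honestly, the generator-by-generator computation corrects the assignments automatically and reproduces the paper's proof -- but as a plan your decomposition misplaces two of the five matched-pair equations and discards the pair that actually carries (M1), so the argument as written does not close. The remaining points you raise (reduction to generators via the Leibniz rules for $\mathcal Q$ and $\{\cdot\,,\cdot\}$, the converse direction by running the identities backwards using Remark \ref{remark_simplifications}(2)--(3), and the recognition of $\dr_{\nabla^B}R$ and $\dr_{\nabla^Q}\omega$ in the top-degree component of the $(b_1,b_2)$ pair) are sound and match the paper.
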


\begin{proof}
  The idea of this proof is very simple, but requires rather long
  computations. We will leave some of the detailed verifications to
  the reader. We check \eqref{Q_preserves_bracket} in coordinates, by
  using the formulae found in \eqref{Q1}, \eqref{Q2}, \eqref{Q3} and 
  \eqref{2-poisson} for $\mathcal Q$ and $\{\cdot\,,\cdot\}$,
  respectively.

  First we have
  $\mathcal Q(f)=\rho_Q^*\dr
  f\in\Gamma(Q^*)$ and $\{f,g\}=0$ for $f,g\in C^\infty(M)$. This
  yields
$\{\mathcal Q(f),g\}+\{f,\mathcal Q(g)\}=\left\{\rho_Q^*\dr f, g\right\}
+\left\{f, \rho_Q^*\dr g\right\}=0=\mathcal Q\{f,g\}
$
by the graded skew-symmetry and $\{\tau,f\}=0$ for
$\tau\in\Gamma(Q^*)$.  
Then we have for $\tau\in\Gamma(Q^*)$:
\begin{equation*}
\begin{split}
 \{\mathcal Q(\tau),f\}-\{\tau,\mathcal
 Q(f)\}&=\left\{\dr_Q\tau+\partial_B\tau,f\right\}-\{\tau,\rho_Q^*\dr f\}\\
  &=0+\rho_B(\partial_B\tau)(f)-\rho_Q(\partial_Q\tau)(f).
\end{split}
\end{equation*}
But we also have $\mathcal Q\{\tau,f\}=\mathcal Q(0)=0$. Hence,
$\{\mathcal Q(\tau),f\}-\{\tau,\mathcal Q(f)\}=\mathcal Q\{\tau,f\}$ is equivalent to
$\rho_B(\partial_B\tau)(f)=\rho_Q(\partial_Q\tau)(f)$.

In a similar manner, we have $\mathcal
Q\{b,f\}=\rho_Q^*\dr(\rho_B(b)(f))$ for $b\in\Gamma(B)$ and $\{\mathcal Q(b),f\}+\{b,\mathcal Q(f)\}$ is 
\begin{equation*}
\begin{split}
  &\left\{\dr_\nabla b-\langle \omega,b\rangle,
    f\right\}+\{b,\rho_Q^*\dr f\}=\left\{\dr_\nabla b,
    f\right\}+\nabla^*_{b}(\rho_Q^*\dr
  f)\\
  =&\left\{\nabla_\cdot b,
    f\right\}+\nabla^*_{b}(\rho_Q^*\dr
  f)=\rho_B(\nabla_\cdot b)(f)+\nabla^*_{b}(\rho_Q^*\dr
  f)\in\Omega^1(Q).
\end{split}
\end{equation*}
Hence, $\mathcal Q\{b,f\}=\{\mathcal Q(b),f\}+\{b,\mathcal Q(f)\}$ if and only if 
\[\rho_Q(q)\rho_B(b)(f)=\rho_B(\nabla_{q}b)(f)+\rho_B(b)\rho_Q(q)(f)-\rho_Q(\nabla_{b}q)(f)\]
for all $q\in\Gamma(Q)$. Since $f$ was arbitrary, this is \eqref{mixed_anchors}:
$[\rho_Q(q),\rho_B(b)]=\rho_B(\nabla_{q}b)-\rho_Q(\nabla_{b}q)$.
Then we have $\mathcal Q\{b,\tau\}=\mathcal
Q(\nabla_{b}^*\tau)=\partial_B(\nabla^*_{b}\tau)+\dr_Q(\nabla^*_{b}\tau)\in\Gamma(B)\oplus\Omega^2(Q)$. The Poisson bracket $\{\mathcal Q(b),\tau\}$ is
$\{\dr_\nabla b-\langle\omega,b\rangle,\tau\}$. A simple computation
shows that $\{\eta,\tau\}=(-1)^{k+1}\ip{\partial_Q\tau}\eta$ for
$\eta\in\Omega^k(Q)$. Hence,
$\{\langle\omega,b\rangle,\tau\}=(-1)^4\ip{\partial_Q\tau}\langle\omega,b\rangle=\ip{\partial_Q\tau}\langle\omega,b\rangle$.
The bracket $\{\dr_\nabla b,\tau\}$ equals $\nabla_{\partial_Q\tau}b+\psi_{b,\tau}$,
with $\psi_{b,\tau}\in\Omega^2(Q)$ the form defined by
\[\psi_{b,\tau}(q_1,q_2)=\langle\nabla^*_{\nabla_{q_1}b}\tau,q_2\rangle-\langle\nabla^*_{\nabla_{q_2}b}\tau,q_1\rangle
\]
for $q_1,q_2\in\Gamma(Q)$.
The Poisson bracket $\{b, \mathcal
Q(\tau)\}=\{b, \dr_Q\tau+\partial_B\tau\}=\{b,
\dr_Q\tau\}+[b,\partial_B\tau]-R(b,\partial_B\tau)$ simplifies to
$\nabla_b^*(\dr_Q\tau)+[b,\partial_B\tau]-R(\partial_B\tau,b)$ because
$\{b,\eta\}=\nabla_b\eta\in\Omega^k(Q)$ for all $\eta\in\Omega^k(Q)$.
By comparing the $\Gamma(B)$ and the $\Omega^2(Q)$-terms, we find that
\[\mathcal Q\{b,\tau\}=\{\mathcal Q(b),\tau\}+\{b, \mathcal Q(\tau)\}
\]
if and only if 
$\partial_B\nabla_{b}^*\tau=\nabla_{\partial_Q\tau}b+[b,\partial_B\tau],
$
which is (M2)
and 
\begin{equation}\label{Omega2Q-part}
\dr_Q(\nabla_b^*\tau)=-\ip{\partial_Q\tau}\langle\omega,b\rangle+\psi_{b,\tau}+\nabla_b^*(\dr_Q\tau)+R(\partial_B\tau,b).
\end{equation}
On $q_1,q_2\in\Gamma(Q)$, the $2$-form 
$\dr_Q(\nabla_b^*\tau)+\ip{\partial_Q\tau}\langle\omega,b\rangle-\psi_{b,\tau}-\nabla_b^*(\dr_Q\tau)-R(\partial_B\tau,b)$
is 
\begin{equation*}
\begin{split}
&\left([\rho_Q(q_1),\rho_B(b)]-\rho_B(\nabla_{q_1}b)+\rho_Q(\nabla_bq_1)\right)\langle
\tau,q_2\rangle\\
+&\,\left([\rho_B(b),\rho_Q(q_2)]+\rho_B(\nabla_{q_2}b)-\rho_Q(\nabla_bq_2)\right)\langle
\tau,q_1\rangle\\
+&\,\left\langle\tau, \partial_Q^*\langle\ip{q_2}\ip{q_1}\omega,b\rangle+\nabla_b\lb
  q_1,
  q_2\rb+\nabla_{\nabla_{q_1}b}q_2-\nabla_{\nabla_{q_2}b}q_1\right\rangle\\
&-\left\langle\tau, \lb\nabla_bq_1,q_2\rb+\lb
  q_1,\nabla_bq_2\rb+\partial_B^*\langle R(\cdot,b)q_1,q_2\rangle\right\rangle.
\end{split}
\end{equation*}
Hence if \eqref{mixed_anchors} holds, then \eqref{Omega2Q-part} is (M4).
 Next we study the equation
$\mathcal Q\{\tau_1,\tau_2\}=\{\mathcal
Q(\tau_1),\tau_2\}-\{\tau_1,\mathcal Q(\tau_2)\}$ for $\tau_1,\tau_2\in\Gamma(Q^*)$.  The
left hand side is
$\rho_Q^*\dr\langle\tau_1,\partial_Q\tau_2\rangle$. The right-hand
side is $\{\mathcal
Q(\tau_1),\tau_2\}+\{\mathcal Q(\tau_2),\tau_1\}$.
The equality 
\begin{equation*}
\begin{split}
\{\mathcal
Q(\tau_1),\tau_2\}&=\{\dr_Q\tau_1+\partial_B\tau_1,\tau_2\}=-\ip{\partial_Q\tau_2}\dr_Q\tau_1+\nabla^*_{\partial_B\tau_1}\tau_2\\
&=\rho_Q^*\dr\langle\tau_1,\partial_Q\tau_2\rangle-\Delta_{\partial_Q\tau_2}\tau_1+\nabla^*_{\partial_B\tau_1}\tau_2
\end{split}
\end{equation*}
shows hence that $\mathcal Q\{\tau_1,\tau_2\}=\{\mathcal
Q(\tau_1),\tau_2\}+\{\mathcal Q(\tau_2),\tau_1\}$ if and only if 
\eqref{almost_C}. Recall from Remark
\ref{remark_simplifications} that together with
$\rho_B\circ\partial_B=\rho_Q\circ\partial_Q$, this is equivalent to
(M1).

Finally, we choose $b_1,b_2\in\Gamma(B)$ and we study the equation
$\mathcal Q\{b_1,b_2\}=\{\mathcal Q(b_1),b_2\}+\{b_1,\mathcal
Q(b_2)\}=\{\mathcal Q(b_1),b_2\}-\{\mathcal
Q(b_2), b_1\}$. The left-hand side is 
\[\mathcal Q([b_1,b_2]-R(b_1,b_2))=\underset{\in\Omega^1(Q,B)}{\underbrace{\dr_\nabla [b_1,b_2]}}-\underset{\in\Omega^3(Q)}{\underbrace{\langle\omega,
[b_1,b_2]\rangle-\dr_Q(R(b_1,b_2))}}+\underset{\in\Omega^1(Q,B)}{\underbrace{\partial_BR(b_1,b_2)}}.
\]
Note that in the expression $\dr_Q(R(b_1,b_2))$, the object
$R(b_1,b_2)$ is understood as an element of $\Omega^2(Q)$, and in the
expression $\partial_BR(b_1,b_2)$, it is understood as a morphism
$Q\to Q^*$.
The Poisson bracket $\{\mathcal Q(b_1),b_2\}$ is 
\[\{\dr_\nabla
b_1-\langle\omega,b_1\rangle,b_2\}=\underset{\in\Omega^1(Q,B)}{\underbrace{\nabla_{\nabla_{b_2}\cdot}b_1+[\nabla_\cdot
b_1,b_2]}}-\underset{\in\Omega^3(Q)}{\underbrace{R(\nabla_\cdot b_1,b_2)+\nabla_{b_2}\langle\omega,b_1\rangle}}
\]
The projection to $\Omega^1(Q,B)$ of $\mathcal Q\{b_1,b_2\}=\{\mathcal Q(b_1),b_2\}-\{\mathcal
Q(b_2), b_1\}$ is
\[\dr_\nabla [b_1,b_2]+\partial_BR(b_1,b_2)=\nabla_{\nabla_{b_2}\cdot}b_1+[\nabla_\cdot
b_1,b_2]-\nabla_{\nabla_{b_1}\cdot}b_2-[\nabla_\cdot
b_2,b_1],
\]
which is (M3). The projection to $\Omega^3(Q)$ of $\mathcal Q\{b_1,b_2\}=\{\mathcal Q(b_1),b_2\}-\{\mathcal
Q(b_2), b_1\}$ is
\[-\langle\omega,[b_1,b_2]\rangle-\dr_Q(R(b_1,b_2))=-R(\nabla_\cdot
b_1,b_2)+\nabla_{b_2}\langle\omega,b_1\rangle
+R(\nabla_\cdot b_2,b_1)-\nabla_{b_1}\langle\omega,b_2\rangle,
\]
that is,
\[(\dr_\nabla\omega)(b_1,b_2)=\dr_Q(R(b_1,b_2))-R(\nabla_\cdot
b_1,b_2)
+R(\nabla_\cdot b_2,b_1).
\]
The right-hand side of this equation is easily calculated to be the
pairing of 
$(\dr_{\nabla^B} R)\in\Omega^3(Q,\wedge^2 B^*)$ with $(b_1,b_2)$. Hence, the projection to $\Omega^3(Q)$ of $\mathcal Q\{b_1,b_2\}=\{\mathcal Q(b_1),b_2\}-\{\mathcal
Q(b_2), b_1\}$ is (M5).
\end{proof}

\subsection{LA-Courant algebroids and equivalence of categories}
Li-Bland's definition of an LA-Courant algebroid \cite{Li-Bland12} is
quite technical and requires the consideration of triple vector
bundles \cite{Mackenzie05b}.

\subsubsection{The LA-Courant algebroid condition}

A \textbf{Dirac structure} with support in a Courant algebroid $\mathsf E\to M$ 
is a subbundle $D\to S$ over a sub-manifold $S$ of $M$, such that
$D(s)$ is maximal isotropic in $\mathsf E(s)$ for all $s\in S$ and 
\[ e_1\an{S}\in\Gamma_S(D), e_2\an{S}\in\Gamma_S(D) \quad \Rightarrow\quad \lb e_1, e_2\rb\an{S}\in\Gamma_S(D)
\]
for all $e_1,e_2\in\Gamma(\mathsf E)$.

Later we will need the following lemma in Section \ref{sec:dirac}. We
leave the proof to the reader.
\begin{lemma}\label{useful_for_dirac_w_support}
  Let $\mathsf E\to M$ be a Courant algebroid and $D\to S$ a
  subbundle with $S$ a sub-manifold of $M$. Assume that $D\to S$ is
  spanned by the restrictions to $S$ of a family $\mathcal S\subseteq
  \Gamma(\mathsf E)$ of sections of $\mathsf E$. Then $D$ is a Dirac
  structure with support $S$ if and only if
\begin{enumerate}
\item $\rho_{\mathsf E}(e)(s)\in T_sS$ for all $e\in\mathcal S$ and $s\in S$,
\item $D_s$ is Lagrangian in $\mathbb E_s$ for all $s\in S$ and
\item $\lb e_1,e_2\rb\an{S}\in\Gamma_S(D)$ for all $e_1,e_2\in\mathcal
  S$.
\end{enumerate}
\end{lemma}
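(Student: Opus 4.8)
The plan is to prove the two implications separately; in both directions the only genuinely substantive point is the interplay between the anchor condition (1) and bracket closedness, everything else being local bookkeeping. I would work locally throughout, since ``$\lb e_1,e_2\rb\an{S}\in\Gamma_S(D)$'' is a sheaf condition on $S$, using freely the Hadamard-type fact that a section of $\mathsf E$ vanishing on $S$ can be written near any point of $S$ as $\sum_k h_k\tilde e_k$ with $h_k\in C^\infty(M)$ vanishing on $S$ and $\tilde e_k\in\Gamma(\mathsf E)$, together with the fact that, since $D$ is spanned by $\{e\an{S}\mid e\in\mathcal S\}$, any $e$ with $e\an{S}\in\Gamma_S(D)$ can locally be written $\sum_i f_ie_i+e^0$ with $e_i\in\mathcal S$, $f_i\in C^\infty(M)$, $e^0\an{S}=0$.

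For ``Dirac $\Rightarrow$ (1),(2),(3)'': (2) is exactly the maximal isotropy hypothesis, and (3) is the closedness axiom applied to $e_1,e_2\in\mathcal S$, whose restrictions lie in $D$ by hypothesis. For (1), I may assume $\mathsf E$ has positive rank, the rank-zero case being vacuous. Fix $s_0\in S$, $e\in\mathcal S$, and choose $e'\in\Gamma(\mathsf E)$ with $e'(s_0)\notin D_{s_0}$; this is possible because a maximal isotropic subspace of the nondegenerate space $\mathsf E_{s_0}$ is proper. For every $g\in C^\infty(M)$ vanishing on $S$, both $e\an{S}$ and $(ge')\an{S}=0$ lie in $\Gamma_S(D)$, so by (CA5) the section $\lb e,ge'\rb\an{S}=(\rho(e)(g))\an{S}\cdot e'\an{S}$ lies in $\Gamma_S(D)$; evaluating at $s_0$ and using $e'(s_0)\notin D_{s_0}$ forces $(\rho(e)(g))(s_0)=0$. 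Letting $g$ range over functions vanishing on $S$ gives $\rho(e)(s_0)\in T_{s_0}S$.

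For the converse, I would first upgrade (1) to ``$\rho(e)(s)\in T_sS$ for every $e$ with $e\an{S}\in\Gamma_S(D)$ and every $s\in S$'': writing $e=\sum f_ie_i+e^0$ as above, $\rho(e)(s)=\sum f_i(s)\rho(e_i)(s)$ on $S$, and each $\rho(e_i)(s)\in T_sS$ by (1). Combining this with (2), I obtain the key observation that for $h\in C^\infty(M)$ vanishing on $S$ one has $(\mathcal Dh)(s)\in D_s$ for all $s\in S$: indeed $\langle(\mathcal Dh)(s),e'(s)\rangle=(\rho(e')(h))(s)=\langle(\dr h)(s),\rho(e')(s)\rangle=0$ for every $e'$ with $e'\an{S}\in\Gamma_S(D)$, so $(\mathcal Dh)(s)\in D_s^\perp=D_s$. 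In particular, if $e'\an{S},e''\an{S}\in\Gamma_S(D)$ then $\langle e',e''\rangle$ vanishes on $S$ by isotropy, hence $\mathcal D\langle e',e''\rangle\an{S}\in\Gamma_S(D)$. Now take $e_1,e_2$ with $e_i\an{S}\in\Gamma_S(D)$. By (CA3), $\lb e_1,e_2\rb\an{S}\equiv-\lb e_2,e_1\rb\an{S}$ modulo $\Gamma_S(D)$, so it suffices to treat $\lb e_2,e_1\rb\an{S}$; expanding $e_1=\sum f_ie_i+e_1^0$ with $e_1^0=\sum_k h_k\tilde e_k$, using (CA5), and discarding the harmless terms $(\rho(e_2)(f_i))\an{S}e_i\an{S}\in\Gamma_S(D)$ and $\lb e_2,h_k\tilde e_k\rb\an{S}=(\rho(e_2)(h_k))\an{S}\tilde e_k\an{S}=0$ (the latter by the upgraded (1), since $h_k$ vanishes on $S$), this reduces modulo $\Gamma_S(D)$ to the brackets $\lb e_2,e_i\rb\an{S}$ with $e_i\in\mathcal S$. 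Flipping once more by (CA3) and running the same expansion on the remaining free section $e_2$, these reduce in turn to $\lb e_i,e_j\rb\an{S}$, $e_i,e_j\in\mathcal S$, which lie in $\Gamma_S(D)$ by (3). Hence $\lb e_1,e_2\rb\an{S}\in\Gamma_S(D)$, and $D$ is a Dirac structure with support $S$.

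The main obstacle I anticipate is purely organisational: arranging the last reduction so that (CA3) is used to flip the bracket precisely when the not-yet-reduced section occupies the second slot (so that (CA5) applies with no $\mathcal D$-term), while keeping track of which restrictions land in $D$, which functions vanish on $S$, and which pairings vanish by isotropy. The one conceptually nontrivial step is the extraction of (1) from the closedness axiom in the first direction, which is also the reason (1) must be listed as a separate hypothesis instead of following from (2) and (3) alone.
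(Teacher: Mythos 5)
The paper states this lemma without proof (``We leave the proof to the reader''), so there is no argument of the author's to compare against; your proposal is correct and is the natural argument. Both directions check out: extracting (1) from the closedness axiom via (CA5) applied to $\lb e, ge'\rb$ with $g$ vanishing on $S$ and $e'(s_0)\notin D_{s_0}$ is sound, and in the converse the two essential points — that $\rho(e)\an{S}$ is tangent to $S$ for every $e$ with $e\an{S}\in\Gamma_S(D)$, and that $(\mathcal D h)\an{S}\in\Gamma_S(D)$ for $h$ vanishing on $S$ because $D_s=D_s^\perp$ — correctly absorb all the $\mathcal D$-terms and $(\rho(\cdot)h_k)\tilde e_k$-terms arising from the (CA3)/(CA5) reduction of $\lb e_1,e_2\rb\an{S}$ to brackets of elements of $\mathcal S$, which condition (3) handles.
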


Consider a Lie algebroid $(q_A\colon A\to M, \rho_A,
[\cdot\,,\cdot])$. 
In \cite{Li-Bland12} Li-Bland defines a relation
$\Pi_A\subseteq TA\times TA$ 
by
\begin{equation*}
\begin{split} 
&T_mb\rho_A(a)(m)+_A\left.\frac{d}{dt}\right\an{t=0}b(m)+t([b,a]+c)(m)\\
&\hspace*{3cm}\sim_{\Pi_A}T_ma\rho_A(b)(m)+_A\left.\frac{d}{dt}\right\an{t=0}a(m)+tc(m)
\end{split}
\end{equation*}
for all $a,b,c\in\Gamma(A)$. (Note that in \cite{Li-Bland12}, the
relation is defined in a different manner. Checking that this
alternative definition is correct is rather long. Details can be
obtained in the appendix of \cite{Jotz15}).
\medskip

Now consider  a double vector bundle $(\mathbb E, B,Q,M)$ 
endowed with a VB-Lie algebroid structure $(\mathbf
b, [\cdot\,,\cdot])$ on $\mathbb E\to Q$ and a linear 
metric $\langle\cdot\,,\cdot\rangle$ on $\mathbb E\to B$ 
(hence, $\mathbb E$ has core $Q^*$). Let $\rho_B\colon B\to TM$ be the anchor 
of the induced Lie algebroid structure on $B$.

The relation $\Pi_{\mathbb E}$ defined as above by the Lie algebroid
structure on $\mathbb E$ over $Q$ is then a relation $\Pi_{\mathbb E}$ of
the triple vector bundles \cite{Li-Bland12}
\begin{equation*}
\begin{xy}
  \xymatrix{T\mathbb E\ar[rr]\ar[dd]\ar[rd]&&TQ\ar[dd]\ar[rd]&\\
&\mathbb E\ar[dd]\ar[rr]&&Q \ar[dd]\\
TB\ar[rd]\ar[rr]&&TM \ar[rd]&\\
 &   B\ar[rr]&&M }
\end{xy}
\qquad 
\text{ and }
\qquad 
\begin{xy}
  \xymatrix{T\mathbb E\ar[rr]\ar[dd]\ar[rd]&&\mathbb E\ar[dd]\ar[rd]&\\
&TQ\ar[dd]\ar[rr]&&Q \ar[dd]\\
TB\ar[rd]\ar[rr]&&B \ar[rd]&\\
 &   TM\ar[rr]&&M }
\end{xy}
\end{equation*}
Li-Bland's definition \cite{Li-Bland12} is the following.
\begin{definition}
Let $(\mathbb E,Q,B,M)$ be a double vector bundle with
a VB-Courant algebroid structure (over $B$) and a VB-algebroid structure
$ (\mathbb E\to Q,B\to M)$.
Then $(\mathbb E, B,Q,M)$ is an LA-Courant
algebroid if $\Pi_{\mathbb E}$ is a Dirac structure with support of
the Courant algebroid $\overline{T\mathbb E}\times T\mathbb E$.
\end{definition}

We have the following theorem.

\begin{theorem}\label{LA-Courant}
  Let $(\mathbb E,Q,B,M)$ be a double vector bundle with a VB-Courant
  algebroid structure on $(\mathbb E\to B, Q\to M)$ and a VB-Lie
  algebroid structure on $(\mathbb E\to Q, B\to M)$. Then in
  particular, $\mathbb E$ is a metric double vector bundle with the
  linear metric underlying the linear Courant algebroid structure on
  $\mathbb E\to B$.  Choose a Lagrangian decomposition $\Sigma\colon
  B\times_MQ\to \mathbb E$ of $\mathbb E$.  Then $(\mathbb E,Q,B,M)$
  is an LA-Courant algebroid if and only if
\begin{enumerate}
\item the linear Lie algebroid structure on $\mathbb E\to Q$ is
  compatible in the sense of \S\ref{metricVBA} with the
  linear metric, and
\item the self-dual 2-representation and the split Lie 2-algebroid
  obtained from the Lagrangian splitting form a matched pair as in
  Definition \ref{matched_pairs}.
\end{enumerate}
\end{theorem}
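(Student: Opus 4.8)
The plan is to bridge the two definitions of an LA-Courant algebroid --- Li-Bland's condition that $\Pi_{\mathbb E}$ be a Dirac structure with support in $\overline{T\mathbb E}\times T\mathbb E$, and our matched-pair condition --- by factoring through the intermediate equivalence with Poisson Lie $2$-algebroids already set up in Theorem \ref{Poisson_Lie_2_char}. So the real content is: \emph{$(\mathbb E,Q,B,M)$ is an LA-Courant algebroid if and only if the metric VB-algebroid structure on $\mathbb E\to Q$ and the VB-Courant algebroid structure on $\mathbb E\to B$ induce, via the algebraisation functor $\mathcal A$ of Theorem \ref{main_crucial}, a Poisson structure of degree $-2$ and a homological vector field on the $[2]$-manifold $\mathcal M=Q[-1]\oplus B^*[-2]$ that together make $\mathcal M$ a Poisson Lie $2$-algebroid.} Once this is established, Theorem \ref{Poisson_Lie_2_char} translates the Poisson Lie $2$-algebroid condition into the matched-pair equations (M1)--(M5), and the compatibility of the VB-algebroid with the metric is exactly the self-duality of the $2$-representation, which is condition (1).

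First I would record that $\mathbb E$ is automatically a metric double vector bundle, since the linear Courant bracket on $\mathbb E\to B$ comes with its linear metric; thus $\mathcal A(\mathbb E)=Q[-1]\oplus B^*[-2]$ is well defined, and a Lagrangian decomposition $\Sigma$ of $\mathbb E$ gives a splitting of $\mathcal M$. Under $\mathcal A$, the VB-Courant algebroid structure on $\mathbb E\to B$ corresponds by \cite{Li-Bland12,Jotz17b} to a homological vector field $\mathcal Q$ whose components in the splitting $\Sigma$ are the split Lie $2$-algebroid $(\partial_B,\lb\cdot\,,\cdot\rb,\nabla,\omega)$; and a VB-algebroid structure on $\mathbb E\to Q$ corresponds by \cite{Jotz18b} to a bracket of degree $-2$ on $C^\infty(\mathcal M)$, which is Poisson (i.e.\ satisfies graded Jacobi) exactly when the pair $(\mathbb E\duer A,\mathbb E\duer B)$ over $C^*$ forms a Lie bialgebroid --- equivalently, by \cite{Jotz18b}, when the VB-algebroid is metric, i.e.\ when the $2$-representation from $\Sigma$ is self-dual. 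This gives the content of condition (1). The remaining, and main, step is to identify Li-Bland's Dirac-with-support condition on $\Pi_{\mathbb E}$ with the statement that $\mathcal Q$ is a Poisson vector field for $\{\cdot\,,\cdot\}$, i.e.\ with \eqref{Q_preserves_bracket}. Here I would invoke precisely Li-Bland's equivalence \cite{Li-Bland12} between LA-Courant algebroids and Poisson Lie $2$-algebroids: the relation $\Pi_{\mathbb E}$ in the triple vector bundle $\overline{T\mathbb E}\times T\mathbb E$ is the geometric incarnation, under the degree-$2$ graded-manifold picture, of the Poisson bivector's invariance under $\mathcal Q$; being a Dirac structure with support encodes on the one hand isotropy (which is the graded skew-symmetry and the degree count, automatic once $\{\cdot\,,\cdot\}$ has degree $-2$ and $\mathcal Q$ has degree $1$) and on the other hand involutivity, which unwinds to \eqref{Q_preserves_bracket}. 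I would cite this rather than re-derive it, noting (as the introduction does) that a direct unwinding of the triple-vector-bundle relation is carried out in \cite{Jotz15} and is extremely long.

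With both translations in hand, the proof closes by a chain of equivalences: $(\mathbb E,Q,B,M)$ is an LA-Courant algebroid $\iff$ (by \cite{Li-Bland12}) $\mathcal A(\mathbb E)$, with $\mathcal Q$ and $\{\cdot\,,\cdot\}$ as above, is a Poisson Lie $2$-algebroid $\iff$ (the bracket is Poisson $\iff$ the $2$-representation is self-dual, i.e.\ condition (1)) \emph{and} (by Theorem \ref{Poisson_Lie_2_char}, $\mathcal Q$ preserves $\{\cdot\,,\cdot\}$ $\iff$ the matched-pair equations hold) $\iff$ (1) and (2). The one point requiring a little care is that Theorem \ref{Poisson_Lie_2_char} presupposes that $\{\cdot\,,\cdot\}$ is already a genuine degree $-2$ Poisson bracket, so the argument must present "the $2$-representation is self-dual'' both as the condition making $\{\cdot\,,\cdot\}$ Poisson \emph{and} as condition (1); these coincide by \cite{Jotz18b}, so no circularity arises, but I would spell this out.

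The main obstacle is the second translation: making rigorous the claim that Li-Bland's Dirac-with-support condition on $\Pi_{\mathbb E}$ is \emph{equivalent} to $\mathcal Q$ being a Poisson vector field. Since we have chosen to use Li-Bland's equivalence rather than re-prove it, the obstacle is really one of correctly quoting and aligning conventions: one must check that the Courant algebroid $\overline{T\mathbb E}\times T\mathbb E$, the sign conventions in the pairing $\nsp{\cdot}{\cdot}$ (see \S\ref{matched_pair_2_rep_sec}), and Li-Bland's description of the Poisson structure of degree $-2$ all match ours, so that ``$\Pi_{\mathbb E}$ Dirac with support'' on our side reads off as \eqref{Q_preserves_bracket} rather than some twisted variant. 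Given that these conventions are fixed consistently in \cite{Jotz18b,Jotz17b} and recalled in the prerequisites, I expect this to go through, but it is where the care is needed.
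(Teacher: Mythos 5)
Your route coincides with the strategy the paper itself announces rather than with the proof it actually points to: the paper gives no in-text proof of Theorem \ref{LA-Courant}, but defers the direct verification that $\Pi_{\mathbb E}$ is a Dirac structure with support (unwound in a Lagrangian decomposition) to the appendix of \cite{Jotz15}, while its introduction says the preferred argument is exactly yours --- Li-Bland's equivalence of LA-Courant algebroids with Poisson Lie $2$-algebroids \cite{Li-Bland12}, composed with the dictionaries of \cite{Jotz18b,Jotz17b} and with Theorem \ref{Poisson_Lie_2_char}. So the plan is sound, but be clear about where the weight sits: everything hinges on quoting Li-Bland's equivalence in a form that actually composes with the functors used here, i.e.\ that his degree $-2$ Poisson structure is the one of \eqref{2-poisson}, that his compatibility is \eqref{Q_preserves_bracket}, and that the Dirac-with-support condition forces the VB-algebroid $\mathbb E\to Q$ to be metric rather than presupposing it (condition (1) is part of the conclusion, not of the hypotheses). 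You flag this alignment of conventions but do not carry it out; it is precisely this translation that the long computation in \cite{Jotz15} performs directly, so your gain in brevity is bought by outsourcing the substantive step. Two further points. The aside that the degree $-2$ bracket is Poisson ``exactly when the pair $(\mathbb E\duer A,\mathbb E\duer B)$ over $C^*$ forms a Lie bialgebroid'' is wrong and out of place: that is the double Lie algebroid condition of \S\ref{matched_pair_2_rep_sec}, whereas here the sides are $B$ and $Q$ and $\mathbb E\to B$ carries a Courant, not a Lie algebroid, structure; the correct statement, which you then give, is that \eqref{2-poisson} is a degree $-2$ Poisson bracket if and only if the VB-algebroid is metric, i.e.\ the $2$-representation is self-dual \cite{Jotz18b}. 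Finally, since both the LA-Courant condition and the Poisson Lie $2$-algebroid condition are splitting-independent while (2) is phrased in a chosen Lagrangian splitting, it is worth one sentence observing that your chain of equivalences automatically yields the matched pair in every Lagrangian splitting once it holds in one.
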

The proof of this theorem is very long and technical (see the appendix
of \cite{Jotz15}), showing that Li-Bland's definition of an LA-Courant
algebroid is hard to handle.  Hence our result provides a new
definition of LA-Courant algebroids, that is much simpler to express
and probably also easier to use.

Further, we now explain how this theorem shows that LA-Courant
algebroids are equivalent to Poisson Lie 2-algebroids. This
has already been found by Li-Bland in \cite{Li-Bland12}. First,
morphisms of LA-Courant algebroids are morphisms of metric double
vector bundles that preserve the Courant algebroid structure and the
Lie algebroid structure \cite{Li-Bland12}.  Hence, the category of
LA-Courant algebroids is a full subcategory of the intersection of the
category of metric VB-algebroids and the category of VB-Courant
algebroids.

On the other hand, Definition \ref{def_Poisson_lie_2} shows that the
category of Poisson Lie 2-algebroids is a full subcategory of the
intersection of the categories of Poisson $[2]$-manifolds and of Lie
$2$-algebroids.

This, Theorem \ref{Poisson_Lie_2_char} and Theorem \ref{LA-Courant}
show that the equivalences of the categories of metric VB-algebroids
and of Poisson $[2]$-manifolds and of the categories of VB-Courant
algebroids and Lie $2$-algebroids restrict to an equivalence of the
category of LA-Courant algebroids with the category of Poisson Lie
$2$-algebroids.

\subsection{Examples of LA-Courant algebroids and Poisson Lie 2-algebroids}
Next we discuss some classes of Examples of LA-Courant algebroids, and
the corresponding Poisson Lie $2$-algebroids.
\subsubsection{The tangent double of a Courant algebroid}\label{TCourant_LACourant}
Let $\mathsf E\to M$ be a Courant algebroid and choose a metric
connection $\nabla\colon\mx(M)\times\Gamma(\mathsf E)\to\Gamma(\mathsf
E)$.  We have seen in Examples \ref{metric_connections} and \ref{self-dual-metric-conn} that the triple
$(\nabla,\nabla,R_\nabla)$ is then the self dual $TM$-representation up to
homotopy describing $(T\mathsf E\to \mathsf E, TM\to M)$ after the
choice of the Lagrangian decomposition $\Sigma^\nabla\colon\mathsf
E\times_M M\times_M \mathsf E\to
T\mathsf E$.  We have also seen in Example \ref{tangent_courant_double} that
the split Lie 2-algebroid encoding the Courant
algebroid side $(T\mathsf E\to TM, \mathsf E\to TM)$ is
$(\rho_{\mathsf E}\colon\mathsf E\to TM, \lb\cdot\,,\cdot\rb_{\Delta^{\rm bas}},\nabla^{\rm bas},
\omega^{\rm bas}_\Delta)$. 

A straightforward computation resembling the one
in \cite[Section 3.2]{GrJoMaMe18} for the tangent double of a Lie
algebroid shows that this $2$-representation and this split Lie
2-algebroid are matched, and so that $T\mathsf E$ is
an LA-Courant algebroid (see also \cite{Li-Bland12}).

The Poisson structure on the $[2]$-manifold corresponding to $T\mathsf
E$ is, via the equivalence of $[2]$-manifolds with metric double
vector bundles, just the Poisson structure that is dual to the Lie
algebroid $T\mathsf E\to \mathsf E$. Hence, it is symplectic (see \cite{Jotz18b},
in particular \S 4.5.1).

Hence, the class of LA-Courant algebroids that is equivalent to the
\emph{symplectic} Lie $2$-algebroids is just the class of tangent
prolongations of Courant algebroids.

\subsubsection{The standard Courant algebroid over a Lie algebroid}\label{PontLA_LACourant}
Let $A$ be a Lie algebroid. Then $TA\oplus T^*A$ is a double vector
bundle with sides $A$ and $TM\oplus A^*$ and with core $A\oplus
T^*M$. It has a linear Courant
algebroid structure on $TA\oplus_AT^*A\to A$ (see \cite{Jotz17b})
and a metric VB-algebroid structure
$(TA\oplus_AT^*A\to TM\oplus A^*, A\to M)$ (see \cite{Jotz18b}).

Set $\partial_A=\pr_A\colon A\oplus
T^*M\to A$, consider a skew-symmetric dull bracket
$\lb\cdot\,,\cdot\rb$ on $\Gamma(TM\oplus A^*)$, with $TM\oplus A^*$
anchored by $\pr_{TM}$, and let $\Delta\colon \Gamma(TM\oplus
E^*)\times\Gamma(A\oplus T^*M)\to\Gamma(A\oplus T^*M)$ be the dual
Dorfman connection. This Dorfman connection is equivalent to a
Lagrangian splitting of the metric double vector bundle 
$TA\oplus T^*A$ \cite{Jotz18a,Jotz18b}. It also defines as follows
a split Lie 2-algebroid
structure on the vector bundles $(TM\oplus A^*,\pr_{TM})$ and $A^*$ \cite{Jotz17b}.

Let $\nabla\colon\Gamma(TM\oplus A^*)\times\Gamma(A)\to\Gamma(A)$ be
the ordinary linear connection defined by
$\nabla=\pr_A\circ\Delta\circ\iota_A$.  The vector bundle map $l=\pr_A^*\colon
A^*\to TM\oplus A^*$ is just the canonical inclusion.
Define $\omega\in\Omega^3(TM\oplus A^*, A^*)$ by
$\omega(\nu_1,\nu_2,\nu_3)=\Jac_{\lb\cdot\,,\cdot\rb}(\nu_1,\nu_2,\nu_3)$. 

The objects $l$, $\lb\cdot\,,\cdot\rb$, $\nabla^*$, $\omega$ define a
split Lie 2-algebroid; the standard split Lie 2-algebroid defined by
the dull bracket (or equivalently by the dual Dorfman connection).

We give in \cite{Jotz18a,Jotz18b} the self-dual $2$-representation
$((\rho,\rho^*)\colon A\oplus T^*M\to TM\oplus A^*, \nabla^{\rm
  bas},\nabla^{\rm bas}, R_\Delta^{\rm bas})$
of $A$ that is defined by the VB-algebroid
$(TA\oplus T^*A\to TM\oplus A^*, A\to M)$ and any such Dorfman
connection: The connections
$\nabla^{\rm bas}\colon \Gamma(A)\times\Gamma(A\oplus T^*M)
\to\Gamma(A\oplus T^*M)$
and
$\nabla^{\rm bas}\colon \Gamma(A)\times\Gamma(TM\oplus A^*)
\to\Gamma(TM\oplus A^*)$ are
\begin{equation*}
\nabla^{\rm bas}_a(X,\alpha)=(\rho,\rho^*)(\Omega_{(X,\alpha)}a)+\ldr{a}(X,\alpha)
\quad \text{ and } \quad \nabla^{\rm
  bas}_a(b,\theta)=\Omega_{(\rho,\rho^*)(b,\theta)}a+\ldr{a}(b,\theta), 
\end{equation*}
where $\Omega\colon \Gamma(TM\oplus A^*)\times\Gamma(A)\to\Gamma(A\oplus T^*M)$ is defined by
\[\Omega_{(X,\alpha)}a=\Delta_{(X,\alpha)}(a,0)-(0,\dr\langle\alpha, a\rangle)
\] and for $a\in\Gamma(A)$, the derivations $\ldr{a}$ over $\rho(a)$
are defined by:
\[\ldr{a}\colon \Gamma(A\oplus T^*M)\to\Gamma(A\oplus T^*M),\quad \ldr{a}(b,\theta)=([a,b], \ldr{\rho(a)}\theta)\]
and 
\[\ldr{a}\colon \Gamma(TM\oplus A^*)\to\Gamma(TM\oplus A^*), \quad \ldr{a}(X,\alpha)=([\rho(a),X], \ldr{a}\alpha).\]
The basic curvature
$R_\Delta^{\rm bas}\colon \Gamma(A)\times\Gamma(A)\times\Gamma(TM\oplus A^*)\to\Gamma(A\oplus T^*M)$
is given by 
\begin{align*}
R_\Delta^{\rm bas}(a,b)(X,\xi)=&-\Omega_{(X,\xi)}[a,b] +\ldr{a}\left(\Omega_{(X,\xi)}b\right)-\ldr{b}\left(\Omega_{(X,\xi)}a\right)\\
&\qquad                                                     + \Omega_{\nabla^{\rm bas}_b(X,\xi)}a-\Omega_{\nabla^{\rm bas}_a(X,\xi)}b.
\end{align*}

A straightforward computation, that also resembles much the one in
\cite[Section 3.2]{GrJoMaMe18} for the tangent double of a Lie
algebroid, shows that the Dorfman $2$-representation and the self-dual
$2$-representation form a matched pair.  Hence, $TA\oplus_AT^*A$ is an
LA-Courant algebroid.

\subsubsection{The LA-Courant algebroid defined by a double Lie algebroid}\label{Poi_lie_2_def_by_matched_ruth}
 More generally, let
\begin{equation*}
\begin{xy}
  \xymatrix{
    D\ar[r]^{\pi_B}\ar[d]_{\pi_A}& B\ar[d]^{q_B}\\
    A\ar[r]_{q_A}&M }
\end{xy}
\end{equation*}
(with core $C$) be a double Lie algebroid. Then the pair $(D,D\duer B)$ of vector bundles over $B$ is a Lie
bialgebroid, with $D\duer B$ endowed with the trivial Lie algebroid
structure.  We get a linear Courant algebroid $D\oplus_B (D\duer B)$
over $B$ with side $A\oplus C^*$
\begin{equation*}
\begin{xy}
  \xymatrix{
   D\oplus_B (D\duer B)\ar[r]\ar[d]&B\ar[d]\\
   A\oplus C^* \ar[r]&M }
\end{xy}
\end{equation*}
and core $C\oplus A^*$.  The Courant algebroid structure
is linear, see \cite{Jotz17b}, \S 4.4.2. Recall also from there that a
linear decomposition $\Sigma\colon A\times_M B\times_MC\to D$ defines
a Lagrangian decomposition $\widetilde\Sigma$ of the metric double vector bundle
$D\oplus_B(D\duer B)$.
Further, the linear decomposition $\Sigma$ of $D$ yields a
matched pair of $2$-representations as in \S\ref{matched_pair_2_rep_sec}.

In the Lagrangian decomposition, the linear Courant algebroid
structure is equivalent to the split Lie 2-algebroid $(\partial_B\circ \pr_C\colon C\oplus A^*\to B, \Delta,\nabla,R)$
defined by
\begin{equation}\label{dorf_2_rep_1}
\begin{split}
\Delta\colon&\Gamma(A\oplus C^*)\times\Gamma(C\oplus A^*)\to\Gamma(C\oplus A^*)\\
&\Delta_{(a,\gamma)}(c,\alpha)=(\nabla_ac,\ldr{a}\alpha+\langle\nabla^*_\cdot\gamma,c\rangle),
\end{split}
\end{equation}
\begin{equation}\label{dorf_2_rep_2}
\begin{split}
  \nabla\colon&\Gamma(A\oplus C^*)\times\Gamma(B)\to\Gamma(B), \qquad \nabla_{(a,\gamma)}b=\nabla_ab
\end{split}
\end{equation}
with $A\oplus C^*$ anchored by $\rho_A$,
and 
$\omega\in \Omega^3(A\oplus C^*,B^*)$ defined by 
\begin{equation}\label{dorf_2_rep_3}
\ip{(a_2,\gamma_2)}\ip{(a_1,\gamma_1)}\omega=\left(R(a_1,a_2), \langle\gamma_2,R(a_1,\cdot)\rangle
+\langle\gamma_1,R(\cdot,a_2)\rangle\right)
\end{equation}
as a section of $\operatorname{Hom}(B, C\oplus A^*)$.

The direct sum $D\oplus_BD\duer B$ over $B$ 
has also a VB-algebroid structure $(D\oplus_BD\duer B\to A\oplus
C^*, B\to M)$ with core $C\oplus A^*$. 
The linear decomposition $\tilde\Sigma\colon
B\times_M(A\oplus C^*)\times_M (C\oplus A^*)\to D\oplus_B(D\duer B)$ defines the
$2$-representation of $B$
\begin{equation}\label{double_2_rep}
(\partial_A\oplus\partial_A^*\colon C\oplus A^*\to A\oplus C^*, \nabla^A\oplus{\nabla^C}^*,\nabla^C\oplus{\nabla^A}^*,
R\oplus(-R^*)),
\end{equation}
see \cite{Jotz18b}, \S 4.5.2.

A straightforward computation shows that the matched pair conditions for
the $2$-representations describing the sides of $D$ imply that the
$2$-representation \eqref{double_2_rep} and the split Lie 2-algebroid
\eqref{dorf_2_rep_1}--\eqref{dorf_2_rep_3}
form a matched
pair. Hence, $(D\oplus_B(D\duer B),A\oplus C^*,B,M)$ has a
natural LA-Courant algebroid structure.  In the same manner,
$(D\oplus_A(D\duer A),B\oplus C^*,A,M)$ has a natural LA-Courant
algebroid structure.  Hence, we get the following theorem.
\begin{theorem}
  Consider a matched pair of $2$-representations with the usual
  notation.  Then the split $[2]$-manifold $(A\oplus C^*)[-1]\oplus
  B^*[-2]$ endowed with the semi-direct Lie $2$-algebroid structure in
  \eqref{dorf_2_rep_1}--\eqref{dorf_2_rep_3} and the Poisson bracket defined by
  \eqref{double_2_rep}, is a split
  Poisson Lie $2$-algebroid.

By symmetry, the split $[2]$-manifold $(B\oplus C^*)[-1]\oplus
  A^*[-2]$ also inherits a split Poisson Lie $2$-algebroid structure.
\end{theorem}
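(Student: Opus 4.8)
The plan is to deduce the statement from Theorem~\ref{Poisson_Lie_2_char}. The $2$-representation \eqref{double_2_rep} is self-dual: this is exactly the assertion that $(D\oplus_B(D\duer B)\to A\oplus C^*,\ B\to M)$ is a metric VB-algebroid, proved in \cite{Jotz18b}, \S 4.5.2. Hence Theorem~\ref{Poisson_Lie_2_char} applies and tells us that the split $[2]$-manifold $(A\oplus C^*)[-1]\oplus B^*[-2]$ with the semi-direct Lie $2$-algebroid structure \eqref{dorf_2_rep_1}--\eqref{dorf_2_rep_3} and the degree $-2$ Poisson bracket defined by \eqref{double_2_rep} is a split Poisson Lie $2$-algebroid \emph{if and only if} this self-dual $2$-representation and this split Lie $2$-algebroid form a matched pair in the sense of Definition~\ref{matched_pairs}, i.e.\ satisfy (M1)--(M5). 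That this matched-pair condition holds is precisely the assertion made just before the statement; so the remaining task is to show how (M1)--(M5) are forced by the seven equations (m1)--(m7) of the matched pair of $2$-representations describing the two sides of $D$.

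To carry this out I would write a general section of $Q=A\oplus C^*$ as a pair $(a,\gamma)$ with $a\in\Gamma(A)$, $\gamma\in\Gamma(C^*)$, and a general section of $Q^*=C\oplus A^*$ as $(c,\alpha)$, substitute the explicit formulas \eqref{dorf_2_rep_1}--\eqref{dorf_2_rep_3} for the Dorfman connection $\Delta$ (equivalently the dull bracket $\lb\cdot\,,\cdot\rb$), for the $Q$-connection on $B$ and for $\omega$, together with the data $\partial_Q=\partial_A\oplus\partial_A^*$, the pair of connections $\nabla^A\oplus{\nabla^C}^*$ and $\nabla^C\oplus{\nabla^A}^*$, and $R\oplus(-R^*)$ from \eqref{double_2_rep}, into each of (M1)--(M5), and then split the resulting identity into its components in the summands $A$, $C$, $C^*$, $A^*$. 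Each component reduces to one of (m1)--(m7), or to a formal consequence of them together with the Leibniz rule and the anchor compatibility \eqref{mixed_anchors}. Concretely, (M2) unwinds to (m3) and (M3) to (m6); the $A$-component of (M1) is (m2) while its $C^*$-component is built from (m1) and (m3); the $A$-component of (M4) is (m5), its remaining components following from (m4); and (M5), once $\omega$ and $R$ are expressed through the curvatures $R_A$ and $R_B$, is precisely the cocycle identity (m7), $\dr_{\nabla^A}R_B=\dr_{\nabla^B}R_A$.

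There is also a purely conceptual route that bypasses these computations. The metric double vector bundle $D\oplus_B(D\duer B)$ together with the Lagrangian decomposition $\widetilde\Sigma$ corresponds, under Theorem~\ref{main_crucial} and the correspondences of \S\ref{metricVBA} and \S\ref{split_lie_2_rep}, exactly to the split $[2]$-manifold in the statement with its homological vector field \eqref{dorf_2_rep_1}--\eqref{dorf_2_rep_3} and its degree $-2$ Poisson structure \eqref{double_2_rep}. Since $D\oplus_B(D\duer B)$ is an LA-Courant algebroid (established just above), Theorem~\ref{LA-Courant} gives the matched-pair condition directly, and Theorem~\ref{Poisson_Lie_2_char} then yields the first claim. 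The second claim follows identically from the LA-Courant algebroid $D\oplus_A(D\duer A)$, or simply by appealing to the symmetry of the axioms (m1)--(m7) under exchanging $A$ and $B$ (equivalently, under transposing the double Lie algebroid $D$).

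The only real difficulty is bookkeeping: in the direct approach one must carry the direct-sum connections $\nabla^A\oplus{\nabla^C}^*$ and $\nabla^C\oplus{\nabla^A}^*$, the $3$-form $\omega$ assembled from $R_A$ as in \eqref{dorf_2_rep_3}, and the curvature $R\oplus(-R^*)$ through the longer identities (M4) and (M5) without a sign slip. I would organise the verification by doing (M2) and (M3) first, since they are essentially verbatim (m3) and (m6), then (M1), and finally (M4) and (M5), treating the latter two one exterior degree at a time.
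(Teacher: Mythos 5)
Your proposal is correct and follows essentially the paper's own route: the paper likewise takes the self-dual $2$-representation \eqref{double_2_rep} from \cite{Jotz18b} \S 4.5.2, asserts (as a ``straightforward computation'' resembling the tangent-double case of \cite{GrJoMaMe18}) that (m1)--(m7) force (M1)--(M5) for the data \eqref{dorf_2_rep_1}--\eqref{dorf_2_rep_3}, and then concludes via Theorem~\ref{Poisson_Lie_2_char} (and Theorem~\ref{LA-Courant} for the LA-Courant formulation), with the second claim by the $A\leftrightarrow B$ symmetry. Only note that your ``conceptual route'' is not an independent shortcut: the LA-Courant structure on $D\oplus_B(D\duer B)$ is itself established in the paper by exactly the matched-pair verification you describe, so that route presupposes the same computation.
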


In the case of the double Lie algebroid $TA$, for $A\to M$ a Lie
algebroid, the two LA-Courant algebroids obtained in this manner are
$TA\oplus_A T^*A$ described in Section \ref{PontLA_LACourant}, and the
tangent prolongation as in Section \ref{TCourant_LACourant} of the
Courant algebroid $A\oplus A^*\to M$;
$\lb (a_1,\alpha_1), (a_2,\alpha_2)\rb=([a_1,a_2], \ldr{a_1}\alpha_2
-\ip{a_2}\dr\alpha_1)$ for $a_1,a_2\in\Gamma(A)$ and
$\alpha_1,\alpha_2\in\Gamma(A^*)$.

\section{The core of an LA-Courant algebroid}\label{core_LA_CA}
We prove in this section that the core of an LA-Courant algebroid
inherits a natural structure of degenerate Courant algebroid.  We
discuss some examples and we deduce a new way of describing the
equivalence between Courant algebroids and symplectic Lie
2-algebroids.
\begin{theorem}\label{core_courant}
  Let $(\mathbb E, B, Q,M)$ be an LA-Courant algebroid and choose a
  Lagrangian splitting. Then the core $Q^*$ inherits the structure of
  a degenerate Courant algebroid over $M$, with the anchor
  $\rho_{Q^*}=\rho_Q\partial_Q$, the map
  $\mathcal D=\rho_Q^*\dr\colon C^\infty(M)\to\Gamma(Q^*)$, the
  pairing defined by
  $\langle \tau_1, \tau_2\rangle_{Q^*}=\langle
  \tau_1, \partial_Q\tau_2\rangle$
  and the bracket defined by
  \begin{equation*}
    \lb\tau_1,
    \tau_2\rb_{Q^*}=\Delta_{\partial_Q\tau_1}\tau_2-\nabla^*_{\partial_B\tau_2}\tau_1
    \end{equation*}
  for all $\tau_1,\tau_2\in\Gamma(Q^*)$.  This structure does not
  depend on the choice of the Lagrangian splitting, and the map
  $\partial_B\colon Q^*\to B$ is compatible with the brackets and the
  anchors: $\rho_B\partial_B=\rho_{Q^*}$ and
\begin{equation}\label{partial_B_morphism}
\partial_B\lb\tau_1,\tau_2\rb_{Q^*}=[\partial_B\tau_1,\partial_B\tau_2]
\end{equation}
for all $\tau_1,\tau_2\in\Gamma(Q^*)$.
\end{theorem}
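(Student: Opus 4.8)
The plan is to verify that the proposed data on $Q^*$ satisfies the five axioms (CA1)--(CA5) of a degenerate Courant algebroid, using the matched-pair equations (M1)--(M5) from Definition \ref{matched_pairs}, and to record independently that $\mathcal D$ is well defined via $\langle \mathcal D f, \tau\rangle_{Q^*} = \langle \rho_Q^*\dr f, \partial_Q\tau\rangle = \rho_Q(\partial_Q\tau)(f) = \rho_{Q^*}(\tau)(f)$. First I would establish (CA5), the Leibniz rule: expanding $\lb \tau_1, f\tau_2\rb_{Q^*} = \Delta_{\partial_Q\tau_1}(f\tau_2) - \nabla^*_{\partial_B(f\tau_2)}\tau_1$ and using the two Leibniz identities for a Dorfman connection (namely $\Delta_q(f\tau) = f\Delta_q\tau + \rho_Q(q)(f)\tau$ and $\nabla^*_{fb}\tau = f\nabla^*_b\tau + \dots$), together with the Leibniz rule $\partial_B(f\tau) = f\partial_B\tau + \dots$ for the bundle map $\partial_B$; the anchor terms collapse because $\rho_{Q^*} = \rho_Q\partial_Q$ acts as a single derivation on $f$, leaving $\lb\tau_1,f\tau_2\rb_{Q^*} = f\lb\tau_1,\tau_2\rb_{Q^*} + \rho_{Q^*}(\tau_1)(f)\tau_2$. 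Then (CA2), the metric-invariance axiom $\rho_{Q^*}(\tau_1)\langle\tau_2,\tau_3\rangle_{Q^*} = \langle\lb\tau_1,\tau_2\rb_{Q^*},\tau_3\rangle_{Q^*} + \langle\tau_2,\lb\tau_1,\tau_3\rb_{Q^*}\rangle_{Q^*}$, should follow from the defining property \eqref{dualdd} of the Dorfman connection dual to the dull bracket $\lb\cdot\,,\cdot\rb$ on $\Gamma(Q)$, the fact that $(\nabla,\nabla^*,R)$ is self-dual (so $\langle\nabla^*_b\tau_2,q\rangle + \langle\tau_2,\nabla_b q\rangle = \rho_B(b)\langle\tau_2,q\rangle$ and $\partial_Q^* = \partial_Q$), and (M1) in its symmetrised form \eqref{almost_C}, which exactly governs the symmetric part of the bracket.

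Next I would check (CA3): $\lb\tau_1,\tau_2\rb_{Q^*} + \lb\tau_2,\tau_1\rb_{Q^*} = \mathcal D\langle\tau_1,\tau_2\rangle_{Q^*}$. This is precisely equation \eqref{almost_C} from Remark \ref{remark_simplifications}, which is equivalent to (M1) once $\rho_Q\partial_Q = \rho_B\partial_B$ is known (and the latter follows from (M1) or (M2)); the left side of \eqref{almost_C} is $\lb\tau_1,\tau_2\rb_{Q^*} + \lb\tau_2,\tau_1\rb_{Q^*}$ and the right side is $\rho_Q^*\dr\langle\tau_1,\partial_Q\tau_2\rangle = \mathcal D\langle\tau_1,\tau_2\rangle_{Q^*}$. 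Axiom (CA4), $\rho_{Q^*}\lb\tau_1,\tau_2\rb_{Q^*} = [\rho_{Q^*}(\tau_1),\rho_{Q^*}(\tau_2)]$, I would derive by applying $\rho_Q\partial_Q$ to the bracket, using \eqref{D1} (i.e. $\partial_B\circ\Delta_q = \nabla_q\circ\partial_B$, equivalently (ii) of the VB-Courant data) and (M2) to push $\partial_Q$ past $\Delta$ and $\nabla^*$, then invoking the compatibility of the dull bracket with $\rho_Q$, compatibility of $\nabla$ with $\rho_B$, the mixed-anchor relation \eqref{mixed_anchors}, and the Lie algebroid axiom for $B$.

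The real obstacle, as usual for Courant-bracket Jacobiators, is (CA1), the Jacobi identity for $\lb\cdot\,,\cdot\rb_{Q^*}$ in Leibniz (Loday) form. Here I would expand $\lb\tau_1,\lb\tau_2,\tau_3\rb_{Q^*}\rb_{Q^*}$ and its two cyclic companions, collecting terms into: curvature-of-$\Delta$ terms (controlled by \eqref{Jac_Dorfman} and \eqref{omega_dorfman_curv}, hence by $\omega$ via (iii)--(iv)); curvature-of-$\nabla^*$ terms (controlled by $R$ via the 2-representation); and mixed $\Delta$--$\nabla^*$ terms (controlled by (M4) in the form \eqref{(LC10)} from Remark \ref{remark_simplifications}). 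The hypothesis $\partial_Q^* = \partial_Q$ and self-duality of $R$ are needed to match the $\rho_Q^*\dr(\dots)$ correction terms, and the curvature relation (iv), $R_\nabla(q_1,q_2)b = \partial_B\langle\ip{q_2}\ip{q_1}\omega,b\rangle$, is what makes the $\omega$- and $R$-contributions cancel; I expect the bookkeeping here — not any single hard idea — to be the bulk of the work, and I would organise it by first proving (CA1) pairs against an arbitrary $\tau_4$ via the anchor, reducing it to a scalar identity.

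Finally, independence of the Lagrangian splitting: two such splittings differ by $\phi \in \Gamma(Q^*\wedge Q^*\otimes B^*)$, which changes $(\Delta,\nabla^*,\partial_B,\partial_Q,R)$ by explicit $\phi$-dependent terms recorded in \cite{Jotz17b,Jotz18b}; I would substitute these into $\rho_{Q^*}$, $\langle\cdot\,,\cdot\rangle_{Q^*}$ and $\lb\cdot\,,\cdot\rb_{Q^*}$ and check the $\phi$-terms cancel — in fact $\partial_Q$, $\rho_Q$ and the pairing $\langle\cdot\,,\partial_Q\cdot\rangle$ are splitting-independent by construction, and the bracket's change is the obstruction to re-checking. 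For \eqref{partial_B_morphism} I would apply $\partial_B$ to $\lb\tau_1,\tau_2\rb_{Q^*}$, use \eqref{D1} again to get $\partial_B\Delta_{\partial_Q\tau_1}\tau_2 = \nabla_{\partial_Q\tau_1}\partial_B\tau_2$, use (M2) to get $\partial_B\nabla^*_{\partial_B\tau_2}\tau_1 = [\partial_B\tau_2,\partial_B\tau_1] + \nabla_{\partial_Q\tau_1}\partial_B\tau_2$, and subtract; the $\nabla$-terms cancel, leaving $[\partial_B\tau_1,\partial_B\tau_2]$, while $\rho_B\partial_B = \rho_{Q^*}$ is the anchor half of the same computation.
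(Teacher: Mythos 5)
Your plan follows the same route as the paper: (CA5) is immediate because $\partial_B$ and the lower slot of $\nabla^*$ are tensorial (the ``$+\dots$'' corrections you allow for $\partial_B(f\tau)$ and $\nabla^*_{fb}\tau_1$ are in fact zero, which is exactly why this axiom needs no work); (CA3) is precisely \eqref{almost_C}; \eqref{partial_B_morphism} and $\rho_B\partial_B=\rho_{Q^*}$ come from \eqref{D1} and (M2) exactly as you say, and (CA4) then follows; (CA2) uses (M1) together with $\partial_Q=\partial_Q^*$, $\nabla\circ\partial_Q=\partial_Q\circ\nabla^*$ and \eqref{dualdd} (the paper packages this as the identity $\partial_Q\lb\tau_1,\tau_2\rb_{Q^*}=\lb\partial_Q\tau_1,\partial_Q\tau_2\rb+\partial_B^*\langle\tau_2,\nabla_\cdot\partial_Q\tau_1\rangle$, so you need (M1) itself, not only its symmetrisation); and splitting-independence works as you propose, since under a change of Lagrangian splitting by $\phi$ the terms $\Delta_{\partial_Q\tau_1}\tau_2$ and $\nabla^*_{\partial_B\tau_2}\tau_1$ both shift by $\phi(\partial_B\tau_2,\partial_Q\tau_1)$ and cancel (note that $\partial_Q$ and $\partial_B$ themselves do not change with the splitting, being core data).

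The one step that would fail as stated is your organisation of (CA1): you propose to reduce the Jacobi identity to a scalar identity by pairing against an arbitrary $\tau_4$. That reduction is only legitimate when the pairing is nondegenerate. Here $\langle\tau,\tau'\rangle_{Q^*}=\langle\tau,\partial_Q\tau'\rangle$ is degenerate in general, so knowing $\langle J,\tau_4\rangle_{Q^*}=0$ for all $\tau_4\in\Gamma(Q^*)$ only says that the Jacobiator $J\in\Gamma(Q^*)$ annihilates the image of $\partial_Q$; it does not give $J=0$ except in the symplectic case where $\partial_Q$ is an isomorphism. You must show that $J$ vanishes as a section of $Q^*$, and the correct way to reduce to scalars is to evaluate against arbitrary sections of $Q$ via the canonical nondegenerate duality $Q^*\times_M Q\to\R$ -- this is what the paper does for the residual term at the end of its computation, using \eqref{dualdd}, \eqref{D1} and \eqref{rho_delta}, after first cancelling the curvature contributions with \eqref{omega_dorfman_curv}, the identity $R_\nabla=\partial_Q\circ R$ from the $2$-representation, \eqref{(LC10)}, \eqref{almost_C} and (M2). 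With that correction your list of ingredients for (CA1) is the right one, and the rest of the argument goes through as in the paper.
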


\begin{proof}
  Theorem \ref{LA-Courant} states that the $2$-representation is
  self-dual and that the $2$-representation and the split Lie
  2-algebroid defined by a Lagrangian splitting form a matched pair.
  Hence, by \S\ref{metricVBA}, the pairing
  $\langle \cdot\,, \cdot\rangle_{Q^*}$ is symmetric.  The map
  $\rho_Q^*\dr\colon C^\infty(M)\to\Gamma(Q^*)$ satisfies
  $\langle \tau, \rho_Q^*\dr
  f\rangle_{Q^*}=\langle \partial_Q\tau, \rho_Q^*\dr
  f\rangle=\langle \rho_Q\partial_Q\tau, \dr
  f\rangle=(\rho_Q\circ\partial_Q)(\tau)(f)$
  for all $\tau\in\Gamma(Q^*)$ and $f\in C^\infty(M)$.  We check
  (CA1)--(CA5) in the definition of a degenerate Courant algebroid
  (see Page \pageref{CA_def}). Condition (CA5) is immediate by
  definition of the bracket. Condition (CA3) is exactly
  \eqref{almost_C}.  Note that (M2) and \eqref{D1} imply
\begin{align*}
  \partial_B\lb \tau_1,
  \tau_2\rb_{Q^*}&=\partial_B(\Delta_{\partial_Q\tau_1}\tau_2-\nabla^*_{\partial_B\tau_2}\tau_1) \nonumber\\
  &=\nabla_{\partial_Q\tau_1}\partial_B\tau_2-[\partial_B\tau_2,\partial_B\tau_1]
  -\nabla_{\partial_Q\tau_1}\partial_B\tau_2=[\partial_B\tau_1,\partial_B\tau_2].
\end{align*}
This and $\rho_Q\circ\partial_Q=\rho_B\circ\partial_B$ (see Remark
\ref{remark_simplifications}) imply the last claim of the theorem.
In the same manner (M1) and
$\nabla\circ\partial_Q=\partial_Q\circ\nabla^*$ (by Definition
of a $2$-representation) imply the equation
\begin{align}\label{partial_Q_preserves}
\partial_Q\lb \tau_1,
\tau_2\rb_{Q^*}&=\lb\partial_Q\tau_1,\partial_Q\tau_2\rb_\Delta+\partial_B^*\langle
\tau_2, \nabla_\cdot\partial_Q\tau_1\rangle.
\end{align}
The compatibility of the bracket with the anchor (CA4) follows then
immediately from \eqref{partial_Q_preserves} with \eqref{rho_delta},
or from \eqref{partial_B_morphism} with
$\rho_Q\circ\partial_Q=\rho_B\circ\partial_B$.  Next we check (CA2)
using \eqref{partial_Q_preserves} and $\nabla\circ\partial_Q=\partial_Q\circ\nabla^*$. We have
\begin{align*}
  &\rho_Q\partial_Q(\tau_1)\langle\tau_2,
  \tau_3\rangle_{Q^*}-\langle\lb \tau_1, \tau_2\rb_{Q^*},
  \tau_3\rangle_{Q^*}-\langle\tau_2, \lb \tau_1,
  \tau_3\rb_{Q^*}\rangle_{Q^*}\\
  =\,&\langle \tau_2,
  \lb \partial_Q\tau_1, \partial_Q\tau_3\rb_\Delta\rangle
  +\langle\nabla^*_{\partial_B\tau_2}
  \tau_1, \partial_Q\tau_3\rangle -
  \langle\tau_2, \partial_Q\lb \tau_1,
  \tau_3\rb_{Q^*}\rangle=0
\end{align*}
since $\langle\nabla^*_{\partial_B\tau_2}\tau_1, \partial_Q\tau_3\rangle =\langle\partial_Q\nabla^*_{\partial_B\tau_2}\tau_1, \tau_3\rangle=\langle\nabla^*_{\partial_B\tau_2}(\partial_Q\tau_1), \tau_3\rangle$.
Finally we check the Jacobi identity (CA1).  Using
\eqref{partial_B_morphism} and \eqref{partial_Q_preserves}, we have
for $\tau_1,\tau_2,\tau_3\in\Gamma(Q^*)$:
\begin{equation*}
\begin{split}
  &\lb \lb \tau_1,\tau_2\rb_{Q^*},\tau_3\rb_{Q^*}+\lb \tau_2,\lb
  \tau_1,\tau_3\rb_{Q^*}\rb_{Q^*}-\lb \tau_1,\lb \tau_2,\tau_3\rb_{Q^*}\rb_{Q^*}\\
  &=\Delta_{\lb\partial_Q\tau_1,\partial_Q\tau_2\rb+\partial_B^*\langle
    \tau_2,
    \nabla_\cdot\partial_Q\tau_1\rangle}\tau_3-\nabla^*_{\partial_B\tau_3}(
  \Delta_{\partial_Q\tau_1}\tau_2-\nabla^*_{\partial_B\tau_2}\tau_1)\\
  &\quad +\Delta_{\partial_Q\tau_2}
  (\Delta_{\partial_Q\tau_1}\tau_3-\nabla^*_{\partial_B\tau_3}\tau_1)
  -\nabla^*_{[\partial_B\tau_1,\partial_B\tau_3]}\tau_2\\
  &\quad-\Delta_{\partial_Q\tau_1}(\Delta_{\partial_Q\tau_2}\tau_3-\nabla^*_{\partial_B\tau_3}\tau_2)+\nabla^*_{[\partial_B\tau_2,\partial_B\tau_3]}\tau_1\\
  &=R_\nabla(\partial_B\tau_3,\partial_B\tau_2)\tau_1+\nabla^*_{\partial_B\tau_2}\nabla^*_{\partial_B\tau_3}\tau_1-R_\Delta(\partial_Q\tau_1,\partial_Q\tau_2)\tau_3+\Delta_{\partial_B^*\langle
    \tau_2, \nabla_\cdot\partial_Q\tau_1\rangle}\tau_3\\
  &\quad -\nabla^*_{\partial_B\tau_3}
  \Delta_{\partial_Q\tau_1}\tau_2+\Delta_{\partial_Q\tau_1}\nabla^*_{\partial_B\tau_3}\tau_2-\Delta_{\partial_Q\tau_2}\nabla^*_{\partial_B\tau_3}\tau_1-\nabla^*_{[\partial_B\tau_1,\partial_B\tau_3]}\tau_2.
\end{split}
\end{equation*} 
Using the equalities 
$R_\Delta(\partial_Q\tau_1, \partial_Q\tau_2)\tau_3=\langle\ip{\partial_Q\tau_2
}\ip{\partial_Q\tau_1}\omega,\partial_B\tau_3\rangle$ by
\eqref{omega_dorfman_curv}, $R_\nabla(\partial_B\tau_3,\partial_B\tau_2)\tau_1=R(\partial_B\tau_3,\partial_B\tau_2)\partial_Q\tau_1$
by the definition of a $2$-representation, and \eqref{(LC10)},
this is
\begin{equation*}
\begin{split}
  &-\Delta_{\nabla_{\partial_B\tau_3}\partial_Q\tau_1}\tau_2+\nabla^*_{\nabla_{\partial_Q\tau_1}\partial_B\tau_3}\tau_2+\langle\nabla_{\nabla_\cdot \partial_B\tau_3}\partial_Q\tau_1,
  \tau_2\rangle \\
  &\quad
  +\nabla^*_{\partial_B\tau_2}\nabla^*_{\partial_B\tau_3}\tau_1+\Delta_{\partial_B^*\langle
    \tau_2,
    \nabla_\cdot\partial_Q\tau_1\rangle}\tau_3-\nabla^*_{[\partial_B\tau_1,\partial_B\tau_3]}\tau_2-\Delta_{\partial_Q\tau_2}\nabla^*_{\partial_B\tau_3}\tau_1.
\end{split}
\end{equation*}
By \eqref{almost_C}, we can replace 
\begin{equation*}
\begin{split}
  &-\Delta_{\nabla_{\partial_B\tau_3}\partial_Q\tau_1}\tau_2+\nabla^*_{\partial_B\tau_2}\nabla^*_{\partial_B\tau_3}\tau_1-\Delta_{\partial_Q\tau_2}\nabla^*_{\partial_B\tau_3}\tau_1\\
  =&-\Delta_{\partial_Q(\nabla^*_{\partial_B\tau_3}\tau_1)}\tau_2+\nabla^*_{\partial_B\tau_2}(\nabla^*_{\partial_B\tau_3}\tau_1)-\Delta_{\partial_Q\tau_2}(\nabla^*_{\partial_B\tau_3}\tau_1)
\end{split}
\end{equation*}
by 
\[-\nabla^*_{\partial_B(\nabla^*_{\partial_B\tau_3}\tau_1)}\tau_2-\rho_Q^*\dr
\langle \tau_2, \partial_Q\nabla^*_{\partial_B\tau_3}\tau_1\rangle
\]
and we get
\begin{equation*}
\begin{split}
  &\nabla^*_{\nabla_{\partial_Q\tau_1}\partial_B\tau_3
    -[\partial_B\tau_1,\partial_B\tau_3]-\partial_B(\nabla_{\partial_B\tau_3}\tau_1)}\tau_2+\langle\nabla_{\nabla_\cdot \partial_B\tau_3}\partial_Q\tau_1,
  \tau_2\rangle \\
  &\quad +\Delta_{\partial_B^*\langle \tau_2,
    \nabla_\cdot\partial_Q\tau_1\rangle}\tau_3 -\rho_Q^*\dr \langle
  \tau_2, \partial_Q\nabla^*_{\partial_B\tau_3}\tau_1\rangle
\end{split}
\end{equation*}
Since
$\nabla_{\partial_Q\tau_1}\partial_B\tau_3
-[\partial_B\tau_1,\partial_B\tau_3]-\partial_B(\nabla_{\partial_B\tau_3}\tau_1)=0$ by (M2),
we finally get
\begin{equation}\label{remainder}
\begin{split}
&\langle\nabla_{\nabla_\cdot \partial_B\tau_3}\partial_Q\tau_1, \tau_2\rangle
+\Delta_{\partial_B^*\langle
\tau_2,
\nabla_\cdot\partial_Q\tau_1\rangle}\tau_3
-\rho_Q^*\dr
\langle \tau_2, \partial_Q\nabla_{\partial_B\tau_3}\tau_1\rangle.
\end{split}
\end{equation}
We write $\beta:=\langle\nabla_\cdot \partial_Q\tau_1,
\tau_2\rangle\in\Gamma(B^*)$. Since
$\rho_Q\circ\partial_Q=\rho_B\circ\partial_B$ and
$\nabla\circ\partial_Q=\partial_Q\circ\nabla$, we find
$\beta=\langle\partial_Q\nabla^*_\cdot \tau_1,
\tau_2\rangle=\langle\nabla^*_\cdot \tau_1,
\partial_Q\tau_2\rangle\in\Gamma(B^*)$.  To see that
\eqref{remainder}, which is a section of $Q^*$, vanishes, we evaluate it on an
arbitrary $q\in\Gamma(Q)$. We use \eqref{D1} and the definition of a
$2$-representation and we get
\begin{equation*}
\begin{split}
  &\langle\nabla_{\nabla_q \partial_B\tau_3}\partial_Q\tau_1,
  \tau_2\rangle +\langle\Delta_{\partial_B^*\beta}\tau_3, q\rangle
  -\rho_Q(q) \langle
  \tau_2, \partial_Q\nabla^*_{\partial_B\tau_3}\tau_1\rangle\\
  =&\langle\nabla_{\partial_B\Delta_q\tau_3}\partial_Q\tau_1,
  \tau_2\rangle +\langle\Delta_{\partial_B^*\beta}\tau_3, q\rangle
  -\rho_Q(q) \langle
  \tau_2, \partial_Q\nabla^*_{\partial_B\tau_3}\tau_1\rangle\\
  =&\langle\beta, \partial_B\Delta_q\tau_3\rangle
  +\langle\Delta_{\partial_B^*\beta}\tau_3, q\rangle -\rho_Q(q)
  \langle\beta,\partial_B\tau_3\rangle =-\langle\lb
  q, \partial_B^*\beta\rb_\Delta,\tau_3\rangle+\langle\Delta_{\partial_B^*\beta}\tau_3,
  q\rangle.
\end{split}
\end{equation*}
Since the Dorfman connection $\Delta$ is dual to the skew-symmetric
dull bracket $\lb\cdot\,,\cdot\rb_\Delta$, this is $\rho_Q(\partial_B^*\beta)\langle q,\tau_3\rangle$.
Because $\rho_Q\circ\partial_B^*=0$ by \eqref{rho_delta}, we can
conclude.

We finally prove that the degenerate Courant algebroid structure does
not depend on the choice of the Lagrangian splitting.  Clearly the
pairing and anchor are independent of the splitting, so we only need
to check that the bracket remains the same if we choose a different
Lagrangian splitting. Assume that
$\Sigma_1,\Sigma_2\colon B\times_MQ\times_MQ^*\to\mathbb E$ are two
Lagrangian decompositions.  Then there is
$\phi\in\Gamma(B^*\otimes Q^*\wedge Q^*)$ such that for all
$(b_m,q_m,\tau_m)\in B\times_MQ\times_MQ^*$,
$\Sigma_1(b_m,q_m,\tau_m)=\Sigma_2(b_m,q_m,\tau_m)+_B(0^{\mathbb
  E}_{b_m}+_Q\phi(b_m,q_m))$.
Then by Remark 2.12 of \cite{GrJoMaMe18}, we have 
$\nabla^2_b\tau=\nabla^1_b\tau+\phi(b,\partial_Q\tau)$ for all
$b\in\Gamma(B)$ and $\tau\in\Gamma(Q^*)$. By Proposition 4.7 in \cite{Jotz17b},
we have 
$\Delta^2_q\tau=\Delta^1_q\tau+\phi(\partial_B\tau,q)$
for all $q\in\Gamma(Q)$ and $\tau\in\Gamma(Q^*)$.
 Then
$\Delta^2_{\partial_Q\tau_1}\tau_2-\nabla^2_{\partial_B\tau_2}\tau_1=
\Delta^1_{\partial_Q\tau_1}\tau_2+\phi(\partial_B\tau_2,\partial_Q\tau_1)
-\nabla^1_{\partial_B\tau_2}\tau_1-\phi(\partial_B\tau_2,\partial_Q\tau_1)
=\Delta^1_{\partial_Q\tau_1}\tau_2-\nabla^1_{\partial_B\tau_2}\tau_1$.
\end{proof}

\begin{example}[Tangent Courant algebroid]
  Consider the example described in \S\ref{TCourant_LACourant}.  The
  degenerate Courant algebroid structure on the core $\mathsf E$ of
  $T\mathsf E$ is just the initial Courant algebroid structure on
  $\mathsf E$ since
  $\Delta_{e_1}e_2=\lb e_1, e_2\rb +\nabla_{\rho(e_2)}e_1$ by
  definition and so
\[\Delta_{e_1}e_2-\nabla_{\rho(e_2)}e_1
=\lb e_1, e_2\rb.
\]
\end{example}

We have hence proved that the Courant algebroid associated to a
symplectic Lie 2-algebroid can be defined directly from any of the
splittings of the Lie 2-algebroid, and so does not need to be obtained
as a derived bracket.

\begin{theorem}\label{culminate}
  Let $\mathcal M$ be a symplectic Lie 2-algebroid over a base
  manifold $M$. Then the corresponding Courant algebroid is defined as
  follows. Choose any splitting
  $\mathcal M\simeq Q[-1]\oplus T^*M[-2]$ of the underlying symplectic
  $[2]$-manifold. Then $Q\simeq Q^*$ via $\partial_Q$ and $Q^*$
  inherits a nondegenerate pairing given by
  $\langle\tau_1,\partial_Q\tau_2\rangle$ for
  $\tau_1,\tau_2\in\Gamma(Q^*)$.  The morphism
  $\partial_{TM}\colon Q^*\to TM$ of the split Lie 2-algebroid
  structure on $Q[-1]\oplus T^*M[-2]$ defines an anchor on $Q^*$. We
  have further a bracket $\lb\cdot\,,\cdot\rb_{Q^*}$ defined on
  $\Gamma(Q^*)$ by
  $\lb \tau_1,
  \tau_2\rb_{Q^*}=\Delta_{\partial_Q\tau_1}\tau_2-\{\partial_{TM}\tau_2,\tau_1\}$
  and that does not depend on the choice of the splitting. This
  anchor, pairing and bracket define a Courant algebroid structure on
  $Q^*$.  
\end{theorem}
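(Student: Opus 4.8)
The plan is to deduce Theorem~\ref{culminate} from Theorem~\ref{core_courant} by spelling out what the symplectic hypothesis means. A symplectic Lie $2$-algebroid is in particular a Poisson Lie $2$-algebroid whose degree $-2$ Poisson bracket $\{\cdot\,,\cdot\}$ is nondegenerate, so by Theorem~\ref{Poisson_Lie_2_char} and Theorem~\ref{LA-Courant} a choice of splitting $\mathcal M\simeq Q[-1]\oplus B^*[-2]$ corresponds to a Lagrangian decomposition of an LA-Courant algebroid $(\mathbb E,Q,B,M)$; here the self-dual $2$-representation $(\partial_Q\colon Q^*\to Q,\nabla^*,\nabla,R)$ is the one equivalent to $\{\cdot\,,\cdot\}$ via \eqref{2-poisson}, and $(\partial_B\colon Q^*\to B,\lb\cdot\,,\cdot\rb,\nabla,\omega)$ is the split Lie $2$-algebroid of $\mathcal Q$. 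I would then observe that nondegeneracy of $\{\cdot\,,\cdot\}$ forces $\partial_Q$ to be an isomorphism and $\rho_B\colon B\to TM$ to be an isomorphism; in other words $B=TM$ with its canonical Lie algebroid structure, and $\mathbb E$ is the tangent prolongation $T\mathsf E$ of a Courant algebroid $\mathsf E$ (this is the content of \S\ref{TCourant_LACourant}, see also \cite{Roytenberg02}). Under the identifications $Q\simeq\mathsf E\simeq\mathsf E^*\simeq Q^*$ one has $\partial_Q=\Id$, $\partial_B=\partial_{TM}=\rho_{\mathsf E}$, $\nabla^*=\nabla$ is the metric connection defining the splitting, and $\Delta$ is its basic Dorfman connection $\Delta^{\rm bas}$.

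Next I would apply Theorem~\ref{core_courant} verbatim to $\mathbb E$: its core $Q^*$ carries a degenerate Courant algebroid with anchor $\rho_{Q^*}=\rho_Q\partial_Q$, map $\mathcal D=\rho_Q^*\dr$, pairing $\langle\tau_1,\tau_2\rangle_{Q^*}=\langle\tau_1,\partial_Q\tau_2\rangle$ and bracket $\lb\tau_1,\tau_2\rb_{Q^*}=\Delta_{\partial_Q\tau_1}\tau_2-\nabla^*_{\partial_B\tau_2}\tau_1$, and by that theorem this structure does not depend on the Lagrangian decomposition, hence not on the splitting of $\mathcal M$. Since $\partial_Q$ is an isomorphism, the symmetric pairing $\langle\tau_1,\partial_Q\tau_2\rangle$ on $Q^*$ is nondegenerate, so this degenerate Courant algebroid is in fact a Courant algebroid (in the nondegenerate case (CA4) and (CA5) are automatic). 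The ``Tangent Courant algebroid'' example following Theorem~\ref{core_courant} then identifies this Courant algebroid with $\mathsf E$ equipped with its original bracket, so that it is indeed the Courant algebroid corresponding to $\mathcal M$ under Roytenberg's equivalence.

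It then remains to put the formulae in the intrinsic shape of the statement. Since $\rho_B=\Id_{TM}$, the anchor is $\rho_{Q^*}=\rho_Q\partial_Q=\rho_B\partial_B=\partial_{TM}$. By \eqref{2-poisson}, $\{b,\tau\}=\nabla^*_b\tau$ for $b\in\Gamma(TM)$ and $\tau\in\Gamma(Q^*)$, so $\nabla^*_{\partial_B\tau_2}\tau_1=\{\partial_{TM}\tau_2,\tau_1\}$ and the bracket becomes $\lb\tau_1,\tau_2\rb_{Q^*}=\Delta_{\partial_Q\tau_1}\tau_2-\{\partial_{TM}\tau_2,\tau_1\}$, which is the expression in the theorem; its independence of the splitting has already been noted. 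This finishes the argument, modulo the verifications imported from Theorem~\ref{core_courant} and its example.

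The step I expect to cost the most care is the first one: checking that nondegeneracy of the degree $-2$ Poisson bracket amounts exactly to $\partial_Q$ being invertible and $B=TM$ (with the standard anchor and bracket), and tracking the chain of identifications $Q\simeq\mathsf E\simeq\mathsf E^*\simeq Q^*$ so that $\partial_Q$ becomes the identity and $\Delta$ becomes $\Delta^{\rm bas}$; once this is in place the remainder is a bookkeeping exercise on top of Theorem~\ref{core_courant} and the tangent-Courant example.
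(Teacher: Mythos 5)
Your argument is correct and is essentially the paper's own: Theorem~\ref{culminate} is recorded there as a direct consequence of Theorem~\ref{core_courant}, the identification in \S\ref{TCourant_LACourant} of symplectic Lie $2$-algebroids with tangent prolongations of Courant algebroids, and the tangent-Courant example, with splitting-independence imported from Theorem~\ref{core_courant}. Your rewriting of $\nabla^*_{\partial_B\tau_2}\tau_1$ as $\{\partial_{TM}\tau_2,\tau_1\}$ via \eqref{2-poisson} is exactly the intended translation.
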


Note that the Courant algebroid structure is transported to $Q$ by
$\Beta=\partial_Q\colon Q^*\to Q$ for our result to be consistent with
the construction in
\S\ref{TCourant_LACourant}.

\begin{example}[Core of the standard Courant algebroid over a Lie algebroid]
  Consider now the example discussed in \S\ref{PontLA_LACourant}; namely
  the standard LA-Courant algebroid
\begin{equation*}
\begin{xy}
\xymatrix{
TA\oplus_AT^*A \ar[d]\ar[r]& TM\oplus A^*\ar[d]\\
 A\ar[r]& M}
\end{xy}
\end{equation*}
over a Lie algebroid $A$.  The degenerate Courant algebroid structure
on the core $A\oplus T^*M$ of $TA\oplus T^*A$ is here given by
$\rho_{A\oplus T^*M}(a,\theta)=\rho_A(a)$, \[\langle (a_1,\theta_1),
(a_2, \theta_2)\rangle_{A\oplus T^*M}=\langle (a_1,\theta_1),
(\rho_A,\rho_A^*)(a_2, \theta_2)\rangle\] and the bracket defined
by \[\lb (a_1,\theta_1), (a_2, \theta_2)\rb_{A\oplus
  T^*M}=([a_1,a_2],\ldr{\rho_A(a_1)}\theta_2-\ip{\rho_A(a_2)}\dr\theta_1)\]
for all $a,a_1,a_2\in\Gamma(A)$ and
$\theta,\theta_1,\theta_2\in\Omega^1(M)$. To see this, use Lemma 5.16
in \cite{Jotz18a} or the next example; this degenerate Courant
algebroid plays a crucial role in the infinitesimal description of
Dirac groupoids \cite{Jotz19}, i.e.~in the definition of \emph{Dirac
  bialgebroids}.

\end{example}
\begin{example}[LA-Courant algebroid associated to a double Lie algebroid]
  More generally, the LA-Courant algebroids (and the
  corresponding Poisson Lie $2$-algebroids) considered in
  \S\ref{Poi_lie_2_def_by_matched_ruth}
and Theorem \ref{core_courant} yield the following application.

A matched pair of $2$-representations as in
\S\ref{matched_pair_2_rep_sec} defines two degenerate Courant
algebroids.  The first one is $C\oplus A^*\to M$ with the anchor
$\rho_{C\oplus A^*}\colon C\oplus A^*\to TM$ defined by
$\rho_{C\oplus
  A^*}=\rho_A\circ\pr_A\circ(\partial_A\oplus\partial_A^*)=\rho_B\circ\partial_B\circ\pr_C$.
The pairing is defined by
\[\langle (c_1,\alpha_1), (c_2,\alpha_2)\rangle_{C\oplus A^*}=\langle
\alpha_1, \partial_Ac_2\rangle+\langle \alpha_2, \partial_Ac_1\rangle
\]
for all $\alpha_1,\alpha_2\in\Gamma(A^*)$ and $c_1,c_2\in\Gamma(C)$, 
and the bracket by
\begin{equation*}
\begin{split}
  \lb (c_1,\alpha_1), (c_2,\alpha_2)\rb_{C\oplus A^*}
  &=\Delta_{(\partial_Ac_1,\partial_A^*\alpha_1)}(c_2,\alpha_2)-\nabla_{\partial_Bc_2}(c_1,\alpha_1)\\
  &=(\nabla_{\partial_Ac_1}c_2-\nabla_{\partial_Bc_2}c_1,\ldr{\partial_Ac_1}\alpha_2+\langle\nabla^*_\cdot\partial_A^*\alpha_1,c_2\rangle-\nabla^*_{\partial_Bc_2}\alpha_1)\\
  &=([c_1,c_2],\ldr{\partial_Ac_1}\alpha_2-\ip{\partial_Ac_2}\dr_A\alpha_1).
\end{split}
\end{equation*}
Note that the restriction to $\Gamma(C)$ of the Courant bracket is the
Lie algebroid bracket induced on $C$ by the matched pair, see
\S\ref{matched_pair_2_rep_sec}.

The second degenerate Courant algebroid is 
$C\oplus B^*\to M$ with the anchor $\rho_{C\oplus
  B^*}\colon C\oplus B^*\to TM$ defined by
$\rho_B\circ\pr_B\circ(\partial_B\oplus\partial_B^*)=\rho_B\circ\partial_B\circ\pr_C$,
the pairing defined by
\[\langle (c_1,\beta_1), (c_2,\beta_2)\rangle_{C\oplus B^*}=\langle
\beta_1, \partial_Bc_2\rangle+\langle \beta_2, \partial_Bc_1\rangle
\]
for all $\beta_1,\beta_2\in\Gamma(B^*)$ and $c_1,c_2\in\Gamma(C)$, 
and the bracket
\begin{equation*}
\begin{split}
  \lb (c_1,\beta_1), (c_2,\beta_2)\rb_{C\oplus B^*}
  &=(\nabla_{\partial_Bc_1}c_2-\nabla_{\partial_Ac_2}c_1,\ldr{\partial_Bc_1}\beta_2-\ip{\partial_Bc_2}\dr_B\beta_1).
\end{split}
\end{equation*}
Here again, by (m1), the restriction to $\Gamma(C)$ of the Courant bracket is the
Lie algebroid bracket induced on $C$ by the matched pair, as in
\S\ref{matched_pair_2_rep_sec}.

\end{example}

\section{VB-Dirac structures, LA-Dirac structures and pseudo Dirac
  structures}\label{sec:dirac}

In this section, we study isotropic subalgebroids of metric
VB-algebroids and Dirac structures in VB- and LA-Courant
algebroids. While we paid attention in the preceding sections to
bridge $[2]$-geometric objects to geometric structures on metric
double vector bundles, we are here more interested in classifications
of VB-Dirac structures via the simple geometric descriptions that we
found before for VB-Courant algebroids and LA-Courant algebroids.

\subsection{VB-Dirac structures}
Let $(\mathbb E, B, Q,M)$ be a VB-Courant algebroid with core $Q^*$
and anchor $\Theta\colon\mathbb E\to TB$.  Let $D$ be a double vector
subbundle structure over $B'\subseteq B$ and $U\subseteq Q$ and with
core $K$.  Choose a linear splitting $\Sigma\colon
B\times_M Q\to \mathbb E$ that is adapted\footnote{Since $D$ and
  $\mathbb E$ are both double vector bundles, there exist two
  decompositions $\mathbb I_D\colon B'\times_MU\times_MK \to D$ and
  $\mathbb I\colon  B\times_M Q\times_M Q^*\to\mathbb E$. Let
  $\iota\colon D\to \mathbb E$ be the double vector bundle inclusion,
  over $\iota_U\colon U\to Q$ and $\iota_{B'}\colon B'\to B$, and with
  core morphism $\iota_K\colon K\to Q^*$.  Then  the map
  $\mathbb I\inv\circ\iota\circ\mathbb I_D \colon
  B'\times_MU\times_MK\to B\times_M Q\times_M Q^*$ defines a morphism
  $\phi\in\Gamma({B'}^*\otimes U^*\otimes Q^*)$ by
  $(\mathbb I\inv\circ\iota\circ\mathbb I_D) (b_m,u_m,k_m)=(\iota_B(b_m),\iota_U(u_m),\iota_K(k_m)+\phi(b_m,u_m))$. Using
  local basis sections of $B$ and $Q$ adapted to $B'$ and $U$ and a
  partition of unity on $M$, extend $\phi$ to $\hat\phi\in
  \Gamma(B^*\otimes Q\otimes Q^*)$. Then define a new decomposition
  $\tilde{\mathbb I}\inv\colon \mathbb E \to B\times_M Q\times_M Q^*$ by
  $\tilde{\mathbb I}\inv(e)= \mathbb
  I\inv(e)+_B(b_m,0^Q_m,-\hat\phi(b_m,q_m))=\mathbb
  I\inv(e)+_Q(0^B_m,q_m,-\hat\phi(b_m,q_m))$ for $e\in\mathbb E$ with
  $\pi_B(e)=b_m$ and $\pi_Q(e)=q_m$. Then $(\tilde{\mathbb
    I}^{-1}\circ\iota\circ\mathbb
  I_D)(b_m,u_m,k_m)=(\iota_B(b_m),\iota_U(u_m),\iota_K(k_m))$ for all
  $(b_m,u_m,k_m)\in B'\times_M U\times_MK$.  The corresponding linear
  splitting $\tilde\Sigma\colon B\times _M Q\to \mathbb E$, $\tilde
  \Sigma(b_m,q_m)=\tilde{\mathbb I}(b_m,q_m,0^{Q^*}_m)$ sends
  $(\iota_{B'}(b_m),\iota_U(u_m))$ to $\iota(\mathbb
  I_D(b_m,u_m,0^K_m)\in \iota(D)$.} to $D$, i.e. such that
$\Sigma(B'\times_MU)\subseteq D$.  Then $D$ is spanned as a vector
bundle over $B'$ by the sections $\sigma_Q(u)\an{B'}$ for all
$u\in\Gamma(U)$ and $\tau^\dagger\an{B'}$ for all $\tau\in\Gamma(K)$.

We get immediately the following proposition.
\begin{proposition}\label{prop1}
  In the situation described above, the double subbundle $D\subseteq
  \mathbb E$ over $B'$ is isotropic if and only if $K\subseteq
  U^\circ$ and $\Lambda$ as in \eqref{lambda} sends $U\otimes U$
  to ${B'}^\circ$.
\end{proposition}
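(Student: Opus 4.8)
The plan is to exploit the explicit spanning set for $D$ over $B'$ that was just described, namely the linear sections $\sigma_Q(u)\an{B'}$ for $u\in\Gamma(U)$ together with the core sections $\tau^\dagger\an{B'}$ for $\tau\in\Gamma(K)$, and to test the isotropy condition $\langle\cdot\,,\cdot\rangle\an{B'}\equiv 0$ on all pairs drawn from this spanning set. Since the pairing on $\mathbb E$ is bilinear over $B$, restricting to $B'$ and checking on a spanning family is equivalent to checking isotropy of the whole subbundle $D\to B'$; this is the first observation I would record. There are three types of pairs to examine: core–core, linear–core, and linear–linear.

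First I would treat the core–core pairings. By axiom (1) in the definition of a metric double vector bundle, $\langle\tau_1^\dagger,\tau_2^\dagger\rangle=0$ for all $\tau_1,\tau_2\in\Gamma(Q^*)$, hence in particular for $\tau_1,\tau_2\in\Gamma(K)$; so these pairs impose no condition and are always isotropic. Next, the linear–core pairings: by axiom (2), $\langle\sigma_Q(u),\tau^\dagger\rangle=q_B^*\langle u,\tau\rangle$ for $u\in\Gamma(U)$ (viewed as a section of $Q$) and $\tau\in\Gamma(K)$ (viewed as a section of $Q^*$). This function on $B$ vanishes on $B'$ — indeed vanishes identically, since $q_B^*$ of a function is zero iff the function is zero — precisely when $\langle u,\tau\rangle=0$ for all $u\in\Gamma(U)$ and $\tau\in\Gamma(K)$, i.e.\ when $K\subseteq U^\circ$. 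Finally, the linear–linear pairings: by \eqref{lambda}, $\langle\sigma_Q(u_1),\sigma_Q(u_2)\rangle=\ell_{\Lambda(u_1,u_2)}$ for $u_1,u_2\in\Gamma(U)$, and this linear function on $B$ restricts to zero on $B'$ exactly when $\Lambda(u_1,u_2)\in{B'}^\circ$ for all $u_1,u_2$, i.e.\ when $\Lambda$ maps $U\otimes U$ into ${B'}^\circ$. Collecting the three cases: $D\to B'$ is isotropic if and only if both $K\subseteq U^\circ$ and $\Lambda(U\otimes U)\subseteq {B'}^\circ$, which is the claim.

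The only genuinely delicate point — and the one I would be careful to state cleanly rather than the routine case-checking — is the reduction at the start: that testing $\langle\cdot\,,\cdot\rangle$ on the chosen \emph{global} spanning sections $\sigma_Q(u)\an{B'}$, $\tau^\dagger\an{B'}$ (which span $D$ as a $C^\infty(B')$-module, using that $\Sigma$ is adapted so $\sigma_Q(u)\an{B'}$ really is a section of $D$) captures isotropy of $D_b\subseteq\mathbb E_b$ for every $b\in B'$. This is immediate from $C^\infty$-bilinearity of the linear metric over $B$ — a product of sections that pair to zero still pairs to zero — together with the fact that these sections span the fibres; I would spell this out in one sentence. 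I would also note that the statement is manifestly independent of the choice of adapted splitting $\Sigma$, since both resulting conditions ($K\subseteq U^\circ$ and $\Lambda(U\otimes U)\subseteq{B'}^\circ$) are phrased purely in terms of $D$, $U$, $B'$, $K$, and the intrinsic metric, while isotropy of $D$ is itself splitting-independent; the change-of-splitting form $\phi\in\Gamma(Q^*\wedge Q^*\otimes B^*)$ adjusts $\Lambda$ by a term that continues to land in ${B'}^\circ$ under the hypothesis $K\subseteq U^\circ$, so the two conditions are stable under changing $\Sigma$. This makes the proposition well-posed regardless of which adapted linear splitting one picks.
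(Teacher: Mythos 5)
Your proposal is correct and is exactly the argument the paper has in mind when it says the proposition follows immediately: evaluate the linear metric on the spanning family $\sigma_Q(u)\an{B'}$, $\tau^\dagger\an{B'}$ and read off the three cases from the axioms of the pairing and \eqref{lambda}, the core--core pairings vanishing automatically, the linear--core ones giving $K\subseteq U^\circ$, and the linear--linear ones giving $\Lambda(U\otimes U)\subseteq {B'}^\circ$. The extra remarks on fibrewise spanning and on independence of the adapted splitting are fine but not needed, since the statement is made for the chosen splitting.
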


\begin{proposition}\label{prop2}
  In the situation described above, $D$ is maximal isotropic if and
  only if $U=K^\circ$ and $\Lambda$  sends
  $U\otimes U$ to ${B'}^\circ$.
\end{proposition}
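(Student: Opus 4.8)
The plan is to deduce this from Proposition~\ref{prop1} by a fiberwise rank count. First I would record that, since $\mathbb E\to B$ is a metric double vector bundle, the linear pairing restricts on each fiber $\mathbb E_b$ (with $b\in B$ over $m\in M$) to a \emph{nondegenerate} symmetric bilinear form on a vector space of dimension $\rk_m(Q)+\rk_m(Q^*)=2\rk_m(Q)$: the summand $Q_m$ enters through the values $\sigma_Q(u)(b)$ and the core $Q^*_m$ through the values $\tau^\dagger(b)$, exactly as in the proof of Proposition~\ref{prop1}. Consequently a linear subspace of $\mathbb E_b$ is maximal isotropic precisely when it is isotropic and has dimension $\rk_m(Q)$; hence the double subbundle $D\subseteq\mathbb E$ over $B'$ is maximal isotropic in $\mathbb E\to B$ if and only if it is isotropic and $\rk(D\to B')=\rk(Q)$.

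Next I would compute this rank. Being a double vector bundle over $B'$ and $U$ with core $K$, $D\to B'$ has rank $\rk(U)+\rk(K)$. By Proposition~\ref{prop1}, if $D$ is isotropic then $K\subseteq U^\circ$, so $\rk(K)\le\rk(U^\circ)=\rk(Q)-\rk(U)$, with equality if and only if $K=U^\circ$; equivalently, identifying $(Q^*)^*$ with $Q$, if and only if $U=K^\circ$. Thus among isotropic $D$ the equality $\rk(U)+\rk(K)=\rk(Q)$ holds precisely when $U=K^\circ$.

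Combining the two steps, $D$ is maximal isotropic if and only if $D$ is isotropic and $U=K^\circ$. Invoking Proposition~\ref{prop1} once more, ``$D$ isotropic'' means ``$K\subseteq U^\circ$ and $\Lambda$ sends $U\otimes U$ into ${B'}^\circ$''; but $U=K^\circ$ already entails $K=U^\circ$, and in particular $K\subseteq U^\circ$, so the annihilator condition becomes automatic. Hence $D$ is maximal isotropic if and only if $U=K^\circ$ and $\Lambda(U\otimes U)\subseteq{B'}^\circ$, which is the assertion. I do not expect a genuine obstacle here; the only point requiring (minor) care is the first one, namely that the linear metric on $\mathbb E\to B$ is fiberwise nondegenerate with $2\rk(Q)$-dimensional fibers, and this is immediate from the definition of a metric double vector bundle together with the adapted-splitting description of $\mathbb E$ used in proving Proposition~\ref{prop1}.
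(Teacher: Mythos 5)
Your argument is correct and is essentially the argument the paper leaves implicit (the propositions are stated as immediate consequences of the adapted-splitting description): isotropy via Proposition \ref{prop1} together with a fiberwise rank count, where maximality of an isotropic $D_b$ is equivalent to $\dim D_b=\rk Q$, and $\rk(D\to B')=\rk U+\rk K$ forces $K=U^\circ$, i.e.\ $U=K^\circ$. The one point worth making explicit is why ``maximal isotropic $\Leftrightarrow$ isotropic of dimension $\rk Q$'': nondegeneracy alone does not give this, but here the core $Q^*_m\subseteq\mathbb E_b$ is itself isotropic of half the dimension, so the linear metric on each fiber of $\mathbb E\to B$ has split signature and its maximal isotropic subspaces are exactly those of dimension $\rk Q$.
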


Now we can prove that if $D$ is maximal isotropic, then there exists a \emph{Lagrangian} 
splitting of $\mathbb E$ that is adapted to $D$. 
\begin{corollary}
  Let $(\mathbb E, B, Q, M)$ be a metric double vector bundle and
  $D\subseteq \mathbb E$ a maximal isotropic double subbundle.  Then
  there exists a Lagrangian splitting that is adapted to $D$.
\end{corollary}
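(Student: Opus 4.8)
The plan is to start from an arbitrary linear splitting $\Sigma$ of $\mathbb E$ that is adapted to $D$ (such a splitting exists by the footnoted argument attached to the discussion of VB-Dirac structures above), and then to correct it to a Lagrangian one \emph{without destroying adaptedness}. Recall from \eqref{new_lift} that a general linear decomposition of a metric double vector bundle is made Lagrangian by replacing $\sigma_Q(q)$ with $\sigma_Q'(q)=\sigma_Q(q)-\frac12\widetilde{\Lambda(q,\cdot)^*}$, where $\Lambda\in\Gamma(S^2Q^*\otimes B^*)$ is defined by \eqref{lambda}. So the only thing to check is that this particular correction term stays inside $D$ when evaluated on $U$; equivalently, that the change of splitting restricted to $U$ takes values in the core $K$ of $D$.

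First I would use that $D$ is maximal isotropic: by Proposition \ref{prop2} we have $U=K^\circ$ and $\Lambda$ sends $U\otimes U$ into ${B'}^\circ$. The correction is built from $\Lambda(q,\cdot)^*\colon B'\to Q^*$ (viewed via the metric/dualisation), and the statement $\Lambda(U\otimes U)\subseteq {B'}^\circ$ says precisely that for $u\in\Gamma(U)$ the map $\Lambda(u,\cdot)\colon Q\to B^*$ kills $U$ — wait, rather it says $\Lambda(u,u')$ is a section of ${B'}^\circ$ for $u,u'\in\Gamma(U)$, i.e.\ $\Lambda(u,\cdot)^*$ has image annihilating... the precise bookkeeping is that $\widetilde{\Lambda(u,\cdot)^*}$ is a core-linear section $\widetilde{\tau}$ with $\tau\in\Gamma(Q^*)$ satisfying $\langle\tau, \cdot\rangle$ vanishing on $B'$ whenever we pair against $U$; combined with $U=K^\circ$ this forces $\tau$ to lie in $U^\circ{}^\circ=\ldots$ — in any case one checks that $\widetilde{\Lambda(u,\cdot)^*}\an{B'}$ is a section of the core $K$ of $D$, because $\Lambda(U\otimes U)\subseteq {B'}^\circ$ is exactly the condition that makes the corresponding core-linear section tangent to $D$. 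Hence $\sigma_Q'(u)\an{B'}=\sigma_Q(u)\an{B'}-\frac12\widetilde{\Lambda(u,\cdot)^*}\an{B'}$ is a sum of two sections of $D$ over $B'$, so $\sigma_Q'(U)\subseteq D$, i.e.\ $\Sigma'$ is still adapted to $D$.

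Since $\Sigma'$ is Lagrangian (this is the content of the sentence following \eqref{new_lift}) and adapted to $D$, the corollary follows. The main obstacle — really the only nonroutine point — is the index-chasing that identifies the correction term $\widetilde{\Lambda(u,\cdot)^*}\an{B'}$ as a genuine section of the core $K\subseteq Q^*$ of $D$; this is where the \emph{maximality} of the isotropic subbundle ($U=K^\circ$, not merely $K\subseteq U^\circ$ as in Proposition \ref{prop1}) is essential, because for a non-maximal isotropic $D$ the corrected lift need not land back in $D$. Everything else is the already-established recipe for Lagrangianising a linear splitting.
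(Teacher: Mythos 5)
Your argument is correct and is essentially the paper's own proof: take an adapted splitting, apply the Lagrangianisation \eqref{new_lift}, and observe that the correction $\widetilde{\tfrac12\Lambda(u,\cdot)^*}\an{B'}$ is core-linear with values in $U^\circ$ (since $\Lambda(u,u')\an{B'}=0$ by isotropy), which by maximality is exactly the core of $D$, so adaptedness is preserved. The hesitant dualisation bookkeeping in the middle resolves exactly as you finally state it, so there is no gap.
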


\begin{proof}
  As before, let $U\subseteq Q$ and $B'\subseteq B$ be the sides of
  $D$. Then by Proposition \ref{prop2} the core of $D$ is the vector
  bundle $U^\circ\subseteq Q^*$. Choose a linear splitting
  $\Sigma\colon Q\times_M B\to \mathbb E$ that is adapted to $D$. Then
  $D$ is spanned as a vector bundle over $B'$ by the sections
  $\sigma_Q(u)\an{B'}$ for all $u\in\Gamma(U)$ and
  $\tau^\dagger\an{B'}$ for all $\tau\in\Gamma(U^\circ)$.  As in
  \eqref{new_lift}, transform $\Sigma$ into a new Lagrangian linear
  splitting $\Sigma'$. We need to show that
  $\sigma_Q'(u)\an{B'}-\sigma_Q(u)\an{B'}$ is equivalent to a section
  of ${B'}^*\otimes U^\circ$ for all $u\in\Gamma(U)$. But
  $\sigma_Q'(u)-\sigma_Q(u)=\widetilde{\frac{1}{2}\Lambda(u,\cdot)}$
  by construction and, since $D$ is isotropic, we have
  $\Lambda(u,u')\an{B'}=0$ for all $u,u'\in\Gamma(U)$.
\end{proof}

\begin{remark}\label{invariant_part}
  Consider a Courant algebroid $\mathsf E\to M$ and its tangent double
  $T\mathsf E$.  Recall from Example \ref{metric_connections} that
  Lagrangian splittings of $T\mathsf E$ are equivalent to metric
  connections $\mx(M)\times\Gamma(\mathsf E)\to \Gamma(\mathsf E)$.
  Let $\nabla$ be such a metric connection, that is adapted to a
  maximally isotropic double subbundle $D$ over the sides $TM$ and
  $U\subseteq \mathsf E$.  Define $[\nabla]\colon
  \mx(M)\times\Gamma(U)\to\Gamma(\mathsf E/U^\perp)$ by
  $[\nabla]_Xu=\overline{\nabla_Xu}\in\Gamma(\mathsf E/U^\perp)$.  A
  second metric connection $\nabla'\colon \mx(M)\times\Gamma(\mathsf
  E)\to \Gamma(\mathsf E)$ is adapted to $D$ if and only if
  $\nabla_Xu-\nabla'_Xu\in\Gamma(U^\perp)$ for all $X\in\mx(M)$ and
  for all $u\in\Gamma(U)$. Hence, if and only if $[\nabla]=[\nabla']$.
  We call $[\nabla]$ the \textbf{invariant part of the metric
    connection adapted to $D$}.
\end{remark}

The existence of Lagrangian splittings of $\mathbb E$ adapted to
maximal isotropic double subbundles $D$ will now be used to study the
involutivity of $D$.

Note that in a very early version of this work, we studied VB-Courant
algebroids via general (not necessarily Lagrangian) linear
splittings. We found some more general objects than split Lie
2-algebroids; involving also $\Lambda\in S^2(Q,B^*)$ defined
in \eqref{lambda}.  The study of the involutivity of general (not
necessarily isotropic) double subbundles $D$ of $\mathbb E$ is
therefore also possible in this more general framework, and yields
very similar results.

\begin{proposition}\label{char_VB_dir}
  Let $(\mathbb E, B, Q, M)$ be a VB-Courant algebroid and $D\subseteq
  \mathbb E$ a maximal isotropic double subbundle.  Choose a
  Lagrangian splitting of $\mathbb E$ that is adapted to $D$ and
  consider the corresponding split Lie 2-algebroid, denoted as
  usual.  Then $D$ is a Dirac structure in $\mathbb E$ with support
  $B'$ if and only if
\begin{enumerate}
\item $\partial_B(U^\circ)\subseteq B'$,
\item $\nabla_ub\in\Gamma(B')$ for all $u\in\Gamma(U)$ and $b\in\Gamma(B')$,
\item $\lb u_1, u_2\rb\in\Gamma(U)$ for all $u_1,u_2\in\Gamma(U)$,
\item $\ip{u_2}\ip{u_1}\omega$ restricts to a section of  $\Gamma(\operatorname{Hom}(B',U^\circ))$ for all
  $u_1,u_2\in\Gamma(U)$.
\end{enumerate}
\end{proposition}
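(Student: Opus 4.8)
The plan is to apply Lemma~\ref{useful_for_dirac_w_support} to the Courant algebroid $\mathbb E\to B$ and the submanifold $B'\subseteq B$ of its base. By the remark preceding Proposition~\ref{prop1}, and using that the core of $D$ is $U^\circ$ by Proposition~\ref{prop2}, the bundle $D\to B'$ is spanned by the restrictions to $B'$ of the family $\mathcal S:=\{\sigma_Q(u)\mid u\in\Gamma(U)\}\cup\{\tau^\dagger\mid\tau\in\Gamma(U^\circ)\}\subseteq\Gamma(\mathbb E)$. Since $D$ is maximal isotropic, $D_{b'}$ is Lagrangian in $\mathbb E_{b'}$ for every $b'\in B'$, so condition~(2) of Lemma~\ref{useful_for_dirac_w_support} holds automatically. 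Therefore $D$ is a Dirac structure with support $B'$ if and only if (i) $\Theta(e)(b')\in T_{b'}B'$ for all $e\in\mathcal S$ and $b'\in B'$, and (ii) $\lb e_1,e_2\rb\an{B'}\in\Gamma_{B'}(D)$ for all $e_1,e_2\in\mathcal S$ (by the metric axioms and the Lagrangian choice, the pairings of any two members of $\mathcal S$ vanish identically, so the Dorfman bracket's lack of skew-symmetry produces no extra conditions, and it suffices to treat the ordered pairs $(\sigma_Q(u),\tau^\dagger)$, $(\sigma_Q(u_1),\sigma_Q(u_2))$ and $(\tau_1^\dagger,\tau_2^\dagger)$). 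I will show that (i) is equivalent to (1) together with (2), and that (ii) is equivalent to (3) together with (4).

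For (i) I would use the explicit linear anchor: $\Theta(\sigma_Q(u))=\widehat{\nabla_u}\in\mx^l(B)$ and $\Theta(\tau^\dagger)=(\partial_B\tau)^\uparrow\in\mx^c(B)$. A core vector field $c^\uparrow$ on $B$ is tangent to the linear subbundle $B'$ along $B'$ exactly when $c\in\Gamma(B')$; taking $c=\partial_B\tau$ for $\tau\in\Gamma(U^\circ)$, this is precisely $\partial_B(U^\circ)\subseteq B'$, i.e.\ (1). A linear vector field $\widehat\delta$ associated with a derivation $\delta$ of $\Gamma(B)$ over $\rho_Q(u)\in\mx(M)$ is tangent to $B'$ along $B'$ if and only if $\delta(\Gamma(B'))\subseteq\Gamma(B')$; since $B'$ lies over the whole of $M$ there is no further constraint on $\rho_Q(u)$, and taking $\delta=\nabla_u$ gives exactly (2).

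For (ii) I would read off, in the adapted Lagrangian decomposition, the three generating brackets $\lb\tau_1^\dagger,\tau_2^\dagger\rb=0$, $\lb\sigma_Q(u),\tau^\dagger\rb=(\Delta_u\tau)^\dagger$ and $\lb\sigma_Q(u_1),\sigma_Q(u_2)\rb=(\lb u_1,u_2\rb,\,-\ip{u_2}\ip{u_1}\omega)$, where the last is the linear section of $\mathbb E\to B$ with side $\lb u_1,u_2\rb\in\Gamma(Q)$ and core-linear part $-\ip{u_2}\ip{u_1}\omega\in\Gamma(\operatorname{Hom}(B,Q^*))$. The first bracket lies in $D$ trivially. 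A core section $\eta^\dagger$ restricts to a section of $D$ over $B'$ if and only if $\eta\in\Gamma(U^\circ)$, so the second bracket demands $\Delta_u\tau\in\Gamma(U^\circ)$ for all $u\in\Gamma(U)$, $\tau\in\Gamma(U^\circ)$; pairing with $u'\in\Gamma(U)$ and using the defining identity $\rho_Q(u)\langle u',\tau\rangle=\langle\lb u,u'\rb,\tau\rangle+\langle u',\Delta_u\tau\rangle$ (whose left-hand side vanishes) shows this is equivalent to $\langle\lb u,u'\rb,\tau\rangle=0$ for all such $u',\tau$, i.e.\ to (3). Finally, a linear section of $\mathbb E\to B$ with side $q$ and core-linear part $\psi$ restricts to a section of $D$ over $B'$ if and only if $q\in\Gamma(U)$ and $\psi\an{B'}\in\Gamma(\operatorname{Hom}(B',U^\circ))$; for the third bracket this reads $\lb u_1,u_2\rb\in\Gamma(U)$ (again (3)) together with $\ip{u_2}\ip{u_1}\omega\an{B'}\in\Gamma(\operatorname{Hom}(B',U^\circ))$, which is (4). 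Combining, (ii) is equivalent to (3) and (4), and together with the previous paragraph this proves the claim.

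The step I expect to be the main obstacle is (ii) for the linear--linear bracket: one has to split the single requirement ``$\lb\sigma_Q(u_1),\sigma_Q(u_2)\rb\an{B'}\in\Gamma_{B'}(D)$'' into its side component and its core-linear component, keeping track of the double vector bundle structure of $D$ (side $U$, core $U^\circ$), and to recognise the core-linear component as $-\ip{u_2}\ip{u_1}\omega$ so that (4) falls out. The tangency criteria for linear and core vector fields used in (i), and the passage from $\Delta_u\tau\in\Gamma(U^\circ)$ to (3) via the dual dull bracket, are routine but involve the usual bookkeeping with linear and core sections and with annihilators.
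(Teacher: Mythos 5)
Your proposal is correct and follows essentially the same route as the paper: apply Lemma \ref{useful_for_dirac_w_support} to the generating family $\sigma_Q(u)$, $\tau^\dagger$ ($u\in\Gamma(U)$, $\tau\in\Gamma(U^\circ)$) in the adapted Lagrangian splitting, read off the anchors and brackets via \eqref{ruth_D_to_B}, and match the tangency conditions to (1)--(2) and the bracket-closure conditions to (3)--(4), with the same duality argument converting $\Delta_u\tau\in\Gamma(U^\circ)$ into condition (3). The only cosmetic difference is that you re-derive (3) from the side component of the linear--linear bracket, whereas the paper obtains it once from the mixed bracket and then only extracts (4) from the core-linear part; these are logically equivalent.
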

A Dirac double subbundle $D$ of a VB-Courant algebroid $\mathbb E$ as
in the proposition is called a \textbf{VB-Dirac structure}.

\begin{proof}
  This is easy to prove using Lemma \ref{useful_for_dirac_w_support}
  on sections $\sigma_Q(u)$ and $\tau^\dagger$, for $u\in\Gamma(U)$
  and $\tau\in\Gamma(U^\circ)$.  Their
  anchors and Courant brackets can be described by
\begin{equation}\label{ruth_D_to_B}
\begin{split}
\Theta(\sigma_Q(u))&=\widehat{\nabla_u}\in\mx^l(B),\qquad 
\Theta(\tau^\dagger)=(\partial_B\tau)^\uparrow\in\mx^c(B),\\
\lb \sigma_Q(u_1),\sigma_Q(u_2)\rb&=\sigma_Q(\lb u_1, u_2\rb)-\widetilde{\ip{u_2}\ip{u_1}\omega},\\
\lb \sigma_Q(u),\tau^\dagger\rb&=(\Delta_u\tau)^\dagger,\qquad \lb \tau_1^\dagger, \tau_2^\dagger\rb=0
\end{split}
\end{equation}
for all $u,u_1,u_2\in\Gamma(U)$ and
$\tau,\tau_1,\tau_2\in\Gamma(U^\circ)$.  The vector field
$\widehat{\nabla_u}$ is tangent to $B'$ on $B'$ if and only if for all
$\beta\in\Gamma((B')^\circ)$, $\widehat{\nabla_u}(\ell_\beta)=\ell_{\nabla_u^*\beta}$
vanishes on $B'$. That is, if and only if, for all
$\beta\in\Gamma((B')^\circ)$, $\nabla_u^*\beta$
is again a section of $(B')^\circ$. This yields (2).
The vector field $(\partial_B\tau)^\uparrow$ is tangent to $B'$
if and only if $\partial_B\tau\in\Gamma(B')$. This yields (1).
Next, $\lb \sigma_Q(u),\tau^\dagger\rb=(\Delta_u\tau)^\dagger$
is a section of $D$ over $B'$ if and only if $\Delta_u\tau\in\Gamma(U^\circ)$.
Since $\Delta_u\tau\in\Gamma(U^\circ)$ for all $u\in\Gamma(U)$ and
  $\tau\in\Gamma(U^\circ)$ if and only if $\lb u_1,
  u_2\rb\in\Gamma(U)$ for all $u_1,u_2\in\Gamma(U)$, this is
  (3). Further, $\sigma_Q(\lb u_1, u_2\rb)$ takes then values in $D$
  over $B'$, and so $\lb \sigma_Q(u_1),\sigma_Q(u_2)\rb$ takes values in $D$
  over $B'$ if and only if $\ip{u_2}\ip{u_1}\omega$ restricts to a morphism $B'\to
  U^\circ$. This is (4).
\end{proof}

We get the following result for  VB-Dirac structures (with support $B$) in $\mathbb E$.

\begin{corollary}\label{LA_on_U} Let $(\mathbb E, B, Q, M)$ be a VB-Courant algebroid and
  $(D,B,U,M)\subseteq \mathbb E$ a maximal isotropic double subbundle.
  Choose a Lagrangian splitting of $\mathbb E$ that is adapted to $D$
  and consider the corresponding split Lie 2-algebroid, denoted
  as usual.  If $D$ is a Dirac structure in $\mathbb E\to B$, then $U$
  inherits a Lie algebroid structure with bracket $\lb\cdot\,,\cdot\rb
  \an{\Gamma(U)\times\Gamma(U)}$ and anchor $\rho_Q\an{U}$. This Lie
  algebroid structure does not depend on the choice of Lagrangian
  splitting.
\end{corollary}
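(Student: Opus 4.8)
The plan is to reduce everything to Proposition \ref{char_VB_dir} applied with support $B'=B$, together with the structure equations of a decomposed VB-Courant algebroid. First I would record what Proposition \ref{char_VB_dir} yields when $B'=B$: conditions (1) and (2) become vacuous, condition (3) says precisely that the dull bracket $\lb\cdot\,,\cdot\rb$ restricts to a bracket on $\Gamma(U)$, and condition (4) -- since ${B'}^\circ=0$ -- says that $\ip{u_3}\ip{u_2}\ip{u_1}\omega=0$ in $\Gamma(B^*)$ for all $u_1,u_2,u_3\in\Gamma(U)$, i.e.\ $\omega(u_1,u_2,u_3)=0$.

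Next I would check the Lie algebroid axioms for $(U,\rho_Q\an{U},\lb\cdot\,,\cdot\rb\an{\Gamma(U)\times\Gamma(U)})$. The bracket is $\R$-bilinear and skew-symmetric, and by (3) it is closed on $\Gamma(U)$; compatibility with the anchor, $\rho_Q\lb u_1,u_2\rb=[\rho_Q(u_1),\rho_Q(u_2)]$, and the Leibniz identity \eqref{leibniz} are inherited verbatim from the dull bracket on $\Gamma(Q)$. The only nontrivial point is the Jacobi identity, and this is immediate from (iii): for $u_1,u_2,u_3\in\Gamma(U)$ we have $\Jac_{\lb\cdot\,,\cdot\rb}(u_1,u_2,u_3)=\partial_B^*\bigl(\ip{u_3}\ip{u_2}\ip{u_1}\omega\bigr)=0$ by the vanishing of $\omega$ on triples of sections of $U$ established above.

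For independence of the Lagrangian splitting, I would take two Lagrangian splittings $\Sigma^1,\Sigma^2$ of $\mathbb E$ adapted to $D$, with change of splitting $\phi_{12}\in\Gamma(Q^*\wedge Q^*\otimes B^*)$. Since both $\sigma_Q^1(u)$ and $\sigma_Q^2(u)$ take values in $D$ over $B$ for every $u\in\Gamma(U)$, their difference is a core-linear section of $\mathbb E\to B$ valued in the core of $D$, which by Proposition \ref{prop2} is $U^\circ$; hence $\phi_{12}(u_1,u_2,\cdot)\equiv 0$, i.e.\ $\ip{u_2}\ip{u_1}\phi_{12}=0$ in $\Gamma(B^*)$ for all $u_1,u_2\in\Gamma(U)$. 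Under a change of Lagrangian splitting the dull bracket is corrected by the term $\partial_B^*(\ip{u_2}\ip{u_1}\phi_{12})$ (this follows by dualising Proposition~4.7 of \cite{Jotz17b} for the associated Dorfman connections, exactly as used in the proof of Theorem \ref{core_courant}), which therefore vanishes on $\Gamma(U)$; and $\rho_Q$ is manifestly splitting-independent. So the induced Lie algebroid structure on $U$ does not depend on the chosen Lagrangian splitting.

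I do not expect a genuine obstacle here: the substance is already packaged in Proposition \ref{char_VB_dir}. The step demanding the most care is the last one -- identifying the core of $D$ as $U^\circ$, using adaptedness to force the relevant component of $\phi_{12}$ to vanish on $U$, and invoking the correct transformation rule for the dull bracket so that the correction term drops out.
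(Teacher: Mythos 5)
Your proof is correct and follows essentially the same route as the paper: closure of the bracket and the Jacobi identity come from conditions (3) and (4) of Proposition \ref{char_VB_dir} together with $\Jac_{\lb\cdot\,,\cdot\rb}=\partial_B^*\circ\omega$, and splitting-independence follows because adaptedness forces the change-of-splitting tensor $\phi\in\Gamma(Q^*\wedge Q^*\otimes B^*)$ to vanish on $U\otimes U$ (its values on $u\in\Gamma(U)$ being core-linear sections of $D$, hence valued in $U^\circ$), so the correction $\partial_B^*\phi(u,u')$ to the dull bracket drops out, exactly as in the paper's argument via Proposition~4.7 of \cite{Jotz17b}. The extra explicit checks of the Leibniz rule and anchor compatibility are routine and consistent with what the paper leaves implicit.
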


\begin{proof}
  By (3) in Proposition \ref{char_VB_dir}, $\lb\cdot\,,\cdot\rb$
  restricts to a bracket on sections of $U$.  For $u_1,u_2,u_3$,
  $\operatorname{Jac}_{\lb\cdot\,,\cdot\rb}(u_1,u_2,u_3)=\partial_B^*\omega(u_1,u_2,u_3)=0$
  since $\omega(u_1,u_2,u_3)=0\in\Gamma(B^*)$ by (4) in Proposition \ref{char_VB_dir}.
  Hence, $U$ with the bracket
  $\lb\cdot\,,\cdot\rb\an{\Gamma(U)\times\Gamma(U)}$ and the anchor
  $\rho_Q\an{U}$ is a Lie algebroid.

If $\phi\in\Gamma(Q^*\wedge Q^*\otimes B^*)$ is the tensor
defined as in the proof of Theorem \ref{core_courant}
by a change of Lagrangian splitting adapted to $D$, then, by 
Proposition 4.7 in \cite{Jotz17b},
\[\lb u, u'\rb_1=\lb u, u'\rb_2+\partial_B^*\phi(u,u')
\]
for all $u,u'$. But since both splittings $\Sigma^1,\Sigma^2\colon
B\times_M Q\to \mathbb E$ are adapted to $D$, we know that
$\sigma^1_Q(u)$ and $\sigma^2_Q(u)$ have values in $D$, and their
difference $\sigma_Q^1(u)-\sigma_Q^2(u)=\widetilde{\phi(u)}$ is a
core-linear section of $D\to B$. Hence it must takes values in
$U^\circ$, and $\phi(u,u')$ must so vanish for all
$u,u'\in\Gamma(U)$. As a consequence, 
$\lb u, u'\rb_1=\lb u, u'\rb_2$.
\end{proof}

The following two corollaries are now easy to prove.
The first one was already given in \cite{Li-Bland12}.
\begin{corollary}\label{Lie_1_sub}
Let $(\mathcal M,\mathcal Q)$ be a Lie $2$-algebroid, and 
$(\mathbb E\to B,Q\to M)$ the corresponding VB-Courant algebroid.
Then VB-Dirac structures in $\mathbb E$ are equivalent to 
wide Lie $1$-subalgebroids of $(\mathcal M,\mathcal Q)$.
\end{corollary}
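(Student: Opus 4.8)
The plan is to reduce everything to the split picture and read off the equivalence from Proposition~\ref{char_VB_dir} and Corollary~\ref{LA_on_U}. Choose a Lagrangian splitting of $\mathbb E$, which by Section~\ref{split_lie_2_rep} identifies $\mathcal M$ with $Q[-1]\oplus B^*[-2]$ equipped with the homological vector field $\mathcal Q$ of \eqref{Q1}--\eqref{Q3}, having components the split Lie $2$-algebroid $(\partial_B\colon Q^*\to B,\rho_Q,\lb\cdot\,,\cdot\rb,\nabla,\omega)$. I take a \textbf{wide Lie $1$-subalgebroid} of $(\mathcal M,\mathcal Q)$ to be a sub-$[1]$-manifold $\mathcal N\hookrightarrow\mathcal M$ over $\mathrm{id}_M$ --- necessarily of the form $\mathcal N=U[-1]$ for a subbundle $U\subseteq Q$ --- to which $\mathcal Q$ is tangent, so that $\mathcal Q$ restricts to a homological vector field on $\mathcal N$, i.e. $U$ acquires a Lie algebroid structure. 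In the chosen splitting such an $\mathcal N$ is the ``naive'' inclusion $U[-1]\hookrightarrow Q[-1]\hookrightarrow\mathcal M$, with vanishing ideal $\mathcal I$ generated by $\Gamma(U^\circ)\subseteq C^\infty(\mathcal M)^1$ together with $\Gamma(B)\subseteq C^\infty(\mathcal M)^2$. (A general wide sub-$[1]$-manifold of $\mathcal M$ differs from this by the additional datum of a map $\wedge^2U\to B^*$; this gauge freedom is removed by a suitable change of splitting and matches, on the VB-Courant side, the freedom in positioning an isotropic double subbundle over a fixed side $U$ and core $U^\circ$.)

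On the VB-Courant side, by Proposition~\ref{prop2} a maximal isotropic double subbundle $D\subseteq\mathbb E$ with support all of $B$ has a subbundle $U\subseteq Q$ as side and $U^\circ$ as core, and by the corollary preceding Remark~\ref{invariant_part} the Lagrangian splitting may be taken adapted to $D$, so that $D$ is spanned over $B$ by the $\sigma_Q(u)\an{B}$, $u\in\Gamma(U)$, and the $\tau^\dagger\an{B}$, $\tau\in\Gamma(U^\circ)$. Having aligned these normalisations, both a VB-Dirac structure with support $B$ and a wide Lie $1$-subalgebroid are encoded by a single subbundle $U\subseteq Q$, and it remains to match conditions. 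I would compute that $\mathcal Q$ is tangent to $U[-1]$, i.e. $\mathcal Q(\mathcal I)\subseteq\mathcal I$, if and only if $\mathcal Q$ preserves the generators $\Gamma(U^\circ)$ and $\Gamma(B)$ (preservation of generators implying $\mathcal Q(\mathcal I)\subseteq\mathcal I$ since $\mathcal Q$ is a derivation). For $\tau\in\Gamma(U^\circ)$, $\mathcal Q(\tau)=\dr_Q\tau+\partial_B\tau$: the $\Gamma(B)$-component already lies in $\mathcal I$ (reflecting that condition~(1) of Proposition~\ref{char_VB_dir} is vacuous for $B'=B$), while $\dr_Q\tau$ lies in $\mathcal I$ exactly when it vanishes on $\wedge^2U$, i.e. $-\langle\lb u_1,u_2\rb,\tau\rangle=0$ for all $u_i\in\Gamma(U)$; ranging over all such $\tau$ this is $\lb u_1,u_2\rb\in\Gamma(U)$, condition~(3). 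For $b\in\Gamma(B)$, $\mathcal Q(b)=\dr_\nabla b-\langle\omega,b\rangle$: the term $\dr_\nabla b\in\Omega^1(Q,B)$ automatically lies in $\mathcal I$ as it carries a factor in $\Gamma(B)$ (reflecting the vacuity of condition~(2) for $B'=B$), while $\langle\omega,b\rangle\in\Omega^3(Q)$ lies in $\mathcal I$ exactly when $\omega(u_1,u_2,u_3)=0$ for all $u_i\in\Gamma(U)$, condition~(4). Hence $\mathcal Q$ is tangent to $U[-1]$ if and only if $U$ and $D$ satisfy conditions (1)--(4) of Proposition~\ref{char_VB_dir} with $B'=B$, i.e. if and only if $D$ is a VB-Dirac structure; and in that case $\mathcal Q\an{U[-1]}$ is exactly the de Rham differential of the Lie algebroid $(U,\rho_Q\an{U},\lb\cdot\,,\cdot\rb\an{\Gamma(U)\times\Gamma(U)})$ produced by Corollary~\ref{LA_on_U}, so that $U[-1]\hookrightarrow\mathcal M$ is indeed a wide Lie $1$-subalgebroid.

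It then remains to check that the correspondence $D\leftrightarrow(U[-1]\hookrightarrow\mathcal M)$ is independent of the chosen Lagrangian splitting. Two Lagrangian splittings adapted to $D$ differ by some $\phi\in\Gamma(Q^*\wedge Q^*\otimes B^*)$, and the argument in the proof of Corollary~\ref{LA_on_U} gives $\phi(u,u')=0$ for all $u,u'\in\Gamma(U)$; this forces the two ``naive'' inclusions $U[-1]\hookrightarrow\mathcal M$ to have the same vanishing ideal (the correction $\sigma_B^{2}-\sigma_B^{1}$ restricts to $0$ on $\wedge^2U$) and, again by Corollary~\ref{LA_on_U}, the two induced Lie algebroid structures on $U$ to coincide; likewise $D$ is recovered from $U$ independently of the adapted splitting. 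The resulting bijection is compatible with the equivalence between Lie $2$-algebroids and VB-Courant algebroids of Section~\ref{split_lie_2_rep}, which completes the argument.

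The main obstacle is the bookkeeping in the second paragraph: tracking the two-step generation of $\mathcal I$ and the decomposition of $\mathcal Q(\tau)$ and $\mathcal Q(b)$ into their $\Gamma(B)$- and form-components, so that the tangency of $\mathcal Q$ to $U[-1]$ is identified with exactly conditions (3) and (4). Since Proposition~\ref{char_VB_dir} and Corollary~\ref{LA_on_U} are already in hand, this is routine; the only genuinely delicate conceptual point is to formulate ``wide Lie $1$-subalgebroid'' so that its splitting-dependent presentation --- a subbundle $U\subseteq Q$ together with a map $\wedge^2U\to B^*$ --- matches the splitting-dependent position of an isotropic double subbundle, and to verify that the two transform compatibly under change of splitting so that the bijection is canonical.
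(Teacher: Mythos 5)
Your proposal is correct and follows essentially the same route as the paper: reduce to a Lagrangian splitting, use the component formulas \eqref{Q1}--\eqref{Q3} for $\mathcal Q$, and identify the subalgebroid condition with conditions (3) and (4) of Proposition \ref{char_VB_dir}, conditions (1) and (2) being vacuous for $B'=B$. The only difference is cosmetic: you phrase tangency via preservation of the vanishing ideal while the paper checks $\mathcal Q_U\circ\mu^\star=\mu^\star\circ\mathcal Q$ in an adapted local frame, and your explicit discussion of the $\wedge^2U\to B^*$ gauge freedom and splitting-independence spells out what the paper leaves implicit.
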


\begin{proof}
  A wide Lie subalgebroid of $(\mathcal M, \mathcal Q)$ is a wide
  $[1]$-submanifold $U[-1]$ of $\mathcal M$ such that $\mathcal
  Q_U(\mu^\star\xi)=\mu^\star(\mathcal Q(\xi))$, $\xi\in
  C^\infty(\mathcal M)$, defines a Lie algebroid structure $\mathcal
  Q_U$ on $U$. Here, $\mu\colon U[-1]\to \mathcal M$ is the 
  submanifold inclusion.

  In a splitting $Q[-1]\oplus B^*[-2]$ of $\mathcal M$, the
  homological vector field $\mathcal Q$ is given by
  \eqref{Q1}--\eqref{Q3}. Choose an open subset $V$ of $M$ with a
  local frame $(u_1,\ldots,u_r,q_{r+1},\ldots,q_l)$ of $Q$ over $V$
  such that $(u_1,\ldots,u_r)$ is a local frame for $U$ over $V$.  Let
  $(\tau_1,\ldots,\tau_l)$ be the dual smooth frame for $Q^*$ over
  $V$.  Then we have
  $\mathcal Q_U(f)=\mu^\star(\rho_Q^*\dr f)=\rho_Q^*\dr f+ U^\circ$
  for all $f\in C^\infty(M)$. This translates easily to
  $\rho_U=\rho_Q\an{U}$. Then we have
  $\mathcal Q_U(\tau_i+U^\circ)=\mathcal Q_U(\mu^\star\tau_i)=
  \mu^\star(\mathcal Q(\tau_k))=-\sum_{i<j}^r\langle \lb u_i,u_j\rb,
  \tau_k\rangle\bar\tau_i\bar\tau_j$ for $k=1,\ldots,r$.  This shows
  that the bracket on $U$ must be the restriction to $\Gamma(U)$ of
  the dull bracket on $\Gamma(Q)$. Finally
  $0=\mathcal Q_U(\mu^\star b)= \mu^\star(\mathcal
  Q(b))=-\sum_{i<j<k<r}\omega(u_i,u_j,u_k)(b)\bar\tau_i\bar\tau_j\bar\tau_k$
  for all $b\in\Gamma(B)$ shows that $\omega(u_1,u_2,u_3)$ must be zero for all
  $u_1,u_2,u_3\in\Gamma(U)$. This is equivalent to (3) in Proposition
  \ref{char_VB_dir} (with $B'=B$). Note that since $B'=B$, (1) and (2)
  in Proposition \ref{char_VB_dir} are trivially satisfied. Hence we
  can conclude.
\end{proof}

The Lie algebroid structure on $U$ is the base Lie algebroid from the
VB-algebroid $D\to B$ in the following corollary. The proof is
immediate.
\begin{corollary}\label{VB_dir_is_VB_alg}
  A VB-Dirac structure $(D,B,U,M)$ in a VB-Courant algebroid inherits
  a linear Lie algebroid structure: $(D\to B, U\to M)$ is a VB-algebroid.
\end{corollary}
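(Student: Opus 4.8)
The plan is to combine two facts that are already available: the classical fact that a Dirac structure in a Courant algebroid is a Lie algebroid, and the explicit description of the restricted anchor and bracket in a Lagrangian splitting adapted to $D$, namely the formulas \eqref{ruth_D_to_B} from the proof of Proposition \ref{char_VB_dir}. Since $(D,B,U,M)$ is a VB-Dirac structure, it is in particular a maximal isotropic, involutive double subbundle of the Courant algebroid $\mathbb E\to B$ with support all of $B$; thus $\Gamma_B(D)$ is closed under the anchor $\Theta$ and the Courant bracket $\lb\cdot\,,\cdot\rb$ of $\mathbb E\to B$. Because $D$ is isotropic, (CA3) together with $\mathcal D(0)=0$ shows that $\lb\cdot\,,\cdot\rb\an{\Gamma_B(D)}$ is skew-symmetric, and then the remaining Courant axioms (CA1), (CA4), (CA5) specialise to the Leibniz identity, the anchor compatibility and the Jacobi identity; hence $D\to B$ is a Lie algebroid with anchor $\Theta\an{D}$ and bracket $\lb\cdot\,,\cdot\rb\an{\Gamma_B(D)}$.

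It remains to check that this Lie algebroid structure is \emph{linear} with respect to the vector bundle $D\to U$, i.e.\ that $(D\to B, U\to M)$ is a VB-algebroid. For this I would fix a Lagrangian splitting of $\mathbb E$ adapted to $D$ (which exists by the corollary following Proposition \ref{prop2}), so that $D\to B$ is spanned by the linear sections $\sigma_Q(u)$, $u\in\Gamma(U)$, and the core sections $\tau^\dagger$, $\tau\in\Gamma(U^\circ)$. The formulas \eqref{ruth_D_to_B} then say that $\Theta(\sigma_Q(u))=\widehat{\nabla_u}\in\mx^l(B)$ is a linear vector field over $\rho_Q(u)\in\mx(M)$, that $\Theta(\tau^\dagger)=(\partial_B\tau)^\uparrow\in\mx^c(B)$ is a core vector field, that $\lb\sigma_Q(u_1),\sigma_Q(u_2)\rb=\sigma_Q(\lb u_1,u_2\rb)-\widetilde{\ip{u_2}\ip{u_1}\omega}$, that $\lb\sigma_Q(u),\tau^\dagger\rb=(\Delta_u\tau)^\dagger$, and that $\lb\tau_1^\dagger,\tau_2^\dagger\rb=0$. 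By conditions (3) and (4) of Proposition \ref{char_VB_dir} we have $\lb u_1,u_2\rb\in\Gamma(U)$, $\Delta_u\tau\in\Gamma(U^\circ)$ and $\ip{u_2}\ip{u_1}\omega$ restricts to a morphism $B\to U^\circ$; hence the bracket of two linear sections of $D\to B$ is again a linear section, the bracket of a linear and a core section is a core section, and the bracket of two core sections vanishes, while the anchor sends linear sections to linear vector fields and core sections to core vector fields. These are exactly the defining properties of a VB-algebroid, and its base Lie algebroid on $U$ is precisely the one produced by Corollary \ref{LA_on_U}, consistently with the above.

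There is no real obstacle here --- everything needed has already been isolated in the proof of Proposition \ref{char_VB_dir}, which is why the statement is ``immediate''. The only point deserving a word is that $\Theta\an{D}$ genuinely takes values in $TB$ (and not merely in $T\mathbb E$ along $D$), which is ensured by conditions (1) and (2) of Proposition \ref{char_VB_dir}; for a VB-Dirac structure these hold with $B'=B$ and are automatic.
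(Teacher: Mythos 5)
Your argument is correct and is exactly the ``immediate'' argument the paper has in mind: the Dirac condition plus (CA1)--(CA5) and isotropy make $D\to B$ a Lie algebroid, and the formulas \eqref{ruth_D_to_B} together with conditions (3)--(4) of Proposition \ref{char_VB_dir} (with $B'=B$) show that in an adapted Lagrangian splitting the anchor and bracket respect linear and core sections, so $(D\to B, U\to M)$ is a VB-algebroid whose base Lie algebroid on $U$ is that of Corollary \ref{LA_on_U}. No gaps.
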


\subsection{LA-Dirac structures}\label{LA-Dirac}
Assume now that $(\mathbb E\to Q,B\to M)$ is a metric VB-algebroid,
and take a maximal isotropic double subbundle $D$ of $\mathbb E$ over
the sides $U\subseteq Q$ and $B'\subseteq B$. We will study conditions
on the self-dual $2$-representation defined by a Lagrangian splitting
and the linear Lie algebroid structure on $\mathbb E\to Q$, and on $Q$
and on $B'$, for $D$ to be an isotropic subalgebroid of $\mathbb E\to
Q$ over $U$. 

Note the similarity of the following result with Proposition \ref{char_VB_dir}.
\begin{proposition}\label{char_sub_alg}
  Let $(\mathbb E, B, Q, M)$ be a metric VB-algebroid and
  $(D,B',U,M)\subseteq \mathbb E$ a maximal isotropic double
  subbundle.  Choose a Lagrangian splitting of $\mathbb E$ that is
  adapted to $D$ and consider the corresponding self-dual
  $2$-representation, denoted as usual.  Then $D\to U$ is a
  subalgebroid of $\mathbb E\to Q$ if and only if
\begin{enumerate}
\item $\partial_Q(U^\circ)\subseteq U$,
\item $\nabla_bu\in\Gamma(U)$ for all $u\in\Gamma(U)$ and $b\in\Gamma(B')$,
\item $[b_1, b_2]\in\Gamma(B')$ for all $b_1,b_2\in\Gamma(B')$,
\item $R(b_1,b_2)$ restricts to a section of
  $\Gamma(\operatorname{Hom}(U,U^\circ))$ for all
  $b_1,b_2\in\Gamma(B')$.
\end{enumerate}
\end{proposition}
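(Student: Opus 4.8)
The plan is to follow the proof of Proposition \ref{char_VB_dir} almost verbatim, replacing the linear Courant algebroid structure on $\mathbb E\to B$ by the linear Lie algebroid structure on $\mathbb E\to Q$, which in the chosen adapted Lagrangian splitting $\Sigma$ is encoded by the self-dual $2$-representation $(\nabla,\nabla^*,R)$ of $B$ on $\partial_Q\colon Q^*\to Q$. By Proposition \ref{prop2} the core of $D$ is $U^\circ$, and since $\Sigma$ is adapted to $D$ (so that $\Sigma(B'\times_MU)\subseteq D$), the double subbundle $D$ is spanned as a vector bundle over $U$ by the restrictions $\sigma_B(b)\an U$ for $b\in\Gamma(B')$ and $\tau^\dagger\an U$ for $\tau\in\Gamma(U^\circ)$. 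First I would record, in terms of this $2$-representation, the anchor $\mathbb E\to TQ$ and the Lie bracket of $\mathbb E\to Q$ on these generators: $\sigma_B(b)$ is anchored to the linear vector field $\widehat{\nabla_b}\in\mx^l(Q)$, $\tau^\dagger$ to the vertical vector field $(\partial_Q\tau)^\uparrow\in\mx^c(Q)$, while $[\sigma_B(b_1),\sigma_B(b_2)]=\sigma_B([b_1,b_2])-\widetilde{R(b_1,b_2)}$, $[\sigma_B(b),\tau^\dagger]=(\nabla^*_b\tau)^\dagger$ and $[\tau_1^\dagger,\tau_2^\dagger]=0$; here $\widetilde{R(b_1,b_2)}$ is the core-linear section attached to $R(b_1,b_2)\in\Gamma(Q^*\wedge Q^*)$, which makes sense precisely because the $2$-representation is self-dual.

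The second ingredient is the evident Lie-subalgebroid analogue of Lemma \ref{useful_for_dirac_w_support}: a double subbundle $D$ of the Lie algebroid $\mathbb E\to Q$ over the submanifold $U\subseteq Q$ is a subalgebroid if and only if, for some family $\mathcal S\subseteq\Gamma(\mathbb E)$ whose restrictions to $U$ span $D$, the anchor of every $e\in\mathcal S$ is tangent to $U$ and $[e_1,e_2]\an U\in\Gamma_U(D)$ for all $e_1,e_2\in\mathcal S$ --- this is proved by the same Leibniz-rule argument as Lemma \ref{useful_for_dirac_w_support}, the maximal-isotropy clause there being irrelevant. Applying this to $\mathcal S=\{\sigma_B(b)\mid b\in\Gamma(B')\}\cup\{\tau^\dagger\mid\tau\in\Gamma(U^\circ)\}$, I would translate the conditions term by term. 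The field $(\partial_Q\tau)^\uparrow$ is tangent to $U$ iff $\partial_Q\tau\in\Gamma(U)$, which over all $\tau\in\Gamma(U^\circ)$ is (1); the field $\widehat{\nabla_b}$ is tangent to $U$ iff $\nabla^*_b\beta\in\Gamma(U^\circ)$ for all $\beta\in\Gamma(U^\circ)$, equivalently $\nabla_bu\in\Gamma(U)$ for all $u\in\Gamma(U)$, which is (2); this same condition makes $[\sigma_B(b),\tau^\dagger]=(\nabla^*_b\tau)^\dagger$ restrict to a section of $D\to U$, so nothing new arises there, and $[\tau_1^\dagger,\tau_2^\dagger]=0$ is automatic.

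It then remains to analyse $[\sigma_B(b_1),\sigma_B(b_2)]\an U=\sigma_B([b_1,b_2])\an U-\widetilde{R(b_1,b_2)}\an U$. Projecting along $\pi_B\colon\mathbb E\to B$ shows this can lie in $\Gamma_U(D)$ only if $[b_1,b_2](m)\in B'_m$ for every $m$, that is, only if $[b_1,b_2]\in\Gamma(B')$, which is (3); and when (3) holds, $\sigma_B([b_1,b_2])\an U\in\Gamma_U(D)$ because $\Sigma$ is adapted, so the membership of the bracket in $\Gamma_U(D)$ becomes equivalent to $\widetilde{R(b_1,b_2)}\an U\in\Gamma_U(D)$. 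As $\widetilde{R(b_1,b_2)}$ takes at $u_m\in U$ the core value $R(b_1,b_2)(u_m,\cdot)\in Q^*_m$ and the core of $D$ is $U^\circ$, this holds iff $R(b_1,b_2)(u_m,\cdot)\in U^\circ_m$ for all $u_m\in U$, i.e.\ iff $R(b_1,b_2)$ restricts to a section of $\operatorname{Hom}(U,U^\circ)$, which is (4). Collecting the equivalences yields the claim. The main point requiring care --- the only place this proof is not a literal transcription of that of Proposition \ref{char_VB_dir} --- is precisely this last bookkeeping: correctly isolating the linear part (which produces (3)) from the core-linear part (which produces (4)) in the restricted bracket of two linear sections; the maximal-isotropy hypothesis on $D$ enters only through Proposition \ref{prop2} and the existence of an adapted Lagrangian splitting.
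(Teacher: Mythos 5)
Your proposal is correct and is exactly the argument the paper intends: the paper's own ``proof'' simply declares the statement ``very similar to the proof of Proposition \ref{char_VB_dir}'' and leaves it to the reader, and you carry out precisely that adaptation, using the spanning sections $\sigma_B(b)\an{U}$, $\tau^\dagger\an{U}$, the standard $2$-representation formulas for the anchor and bracket of $\mathbb E\to Q$, and a Lie-algebroid analogue of Lemma \ref{useful_for_dirac_w_support}. The translation of the four conditions (including isolating the linear part giving (3) from the core-linear part giving (4), and noting that the mixed bracket reproduces (2)) is accurate.
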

\begin{proof}
  This proof is very similar to the proof of Proposition
  \ref{char_VB_dir}, and left to the reader.
\end{proof}

Now let  $(\mathbb E, Q, B, M)$ be an LA-Courant algebroid.
A VB-Dirac structure $(D,U,B',M)$ in $\mathbb E$ is an \textbf{LA-Dirac structure} 
if $(D\to U, B'\to M)$ is also a subalgebroid of $(\mathbb E\to Q,B\to M)$.
We deduce from Propositions \ref{char_VB_dir} and \ref{char_sub_alg}
a characterisation of LA-Dirac structures.

\begin{proposition}\label{char_LA_D}
  Let $(\mathbb E, B, Q, M)$ be an LA-Courant algebroid and
  $(D,B',U,M)$ a maximal isotropic double
  subbundle of $\mathbb E$.  Choose a Lagrangian splitting of $\mathbb E$ that is
  adapted to $D$ and consider the corresponding matched self-dual
  $2$-representation and split Lie 2-algebroid.  Then $D\to U$ is
  an LA-Dirac structure in $\mathbb E$ if and only if
\begin{enumerate}
\item $\partial_B(U^\circ)\subseteq B'$ and $\partial_Q(U^\circ)\subseteq U$,
\item $\nabla_ub\in\Gamma(B')$ for all $u\in\Gamma(U)$ and $b\in\Gamma(B')$,
\item $\nabla_bu\in\Gamma(U)$ for all $u\in\Gamma(U)$ and $b\in\Gamma(B')$,
\item $\lb u_1, u_2\rb\in\Gamma(U)$ for all $u_1,u_2\in\Gamma(U)$,
\item $[b_1, b_2]\in\Gamma(B')$ for all $b_1,b_2\in\Gamma(B')$,
\item $\ip{u_2}\ip{u_1}\omega$ restricts to a section of  $\Gamma(\operatorname{Hom}(B',U^\circ))$ for all
  $u_1,u_2\in\Gamma(U)$,
\item $R(b_1,b_2)$ restricts to a section of
  $\Gamma(\operatorname{Hom}(U,U^\circ))$ for all
  $b_1,b_2\in\Gamma(B')$.
\end{enumerate}
\end{proposition}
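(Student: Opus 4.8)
The plan is to deduce this directly from Propositions \ref{char_VB_dir} and \ref{char_sub_alg} by taking the conjunction of their conditions. First I would invoke Theorem \ref{LA-Courant}: since $\mathbb E$ is an LA-Courant algebroid, the underlying VB-algebroid $(\mathbb E\to Q,B\to M)$ is metric for the linear metric coming from the VB-Courant structure, and in any Lagrangian decomposition the induced self-dual $2$-representation and split Lie $2$-algebroid form a matched pair. Because $D$ is a maximal isotropic double subbundle of $\mathbb E$, the corollary following Proposition \ref{prop2} yields a Lagrangian splitting $\Sigma\colon B\times_MQ\to\mathbb E$ adapted to $D$; I would fix such a splitting once and for all, so that the split Lie $2$-algebroid $(\partial_B\colon Q^*\to B,\lb\cdot\,,\cdot\rb,\nabla,\omega)$ of the Courant side and the self-dual $2$-representation $(\partial_Q\colon Q^*\to Q,\nabla^*,\nabla,R)$ of the VB-algebroid side are simultaneously available in the same decomposition, with all connections labelled as in Definition \ref{matched_pairs}.

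Next I would use the definition of an LA-Dirac structure recalled just above: the maximal isotropic double subbundle $D$ is LA-Dirac in $\mathbb E$ precisely when it is at once a VB-Dirac structure, i.e.\ a Dirac structure in $\mathbb E\to B$ with support $B'$, and a subalgebroid of the VB-algebroid $\mathbb E\to Q$ over $U$. Applying Proposition \ref{char_VB_dir} in the chosen adapted Lagrangian splitting characterises the first property by the four conditions $\partial_B(U^\circ)\subseteq B'$, $\nabla_ub\in\Gamma(B')$ for $u\in\Gamma(U)$, $b\in\Gamma(B')$, $\lb u_1,u_2\rb\in\Gamma(U)$ for $u_1,u_2\in\Gamma(U)$, and $\ip{u_2}\ip{u_1}\omega$ restricting to a section of $\operatorname{Hom}(B',U^\circ)$. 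Applying Proposition \ref{char_sub_alg} in the same splitting characterises the second property by $\partial_Q(U^\circ)\subseteq U$, $\nabla_bu\in\Gamma(U)$ for $u\in\Gamma(U)$, $b\in\Gamma(B')$, $[b_1,b_2]\in\Gamma(B')$ for $b_1,b_2\in\Gamma(B')$, and $R(b_1,b_2)$ restricting to a section of $\operatorname{Hom}(U,U^\circ)$. Taking the conjunction of the two lists gives exactly the seven conditions of the statement: (1) combines $\partial_B(U^\circ)\subseteq B'$ with $\partial_Q(U^\circ)\subseteq U$, the conditions (2), (4), (6) are the three remaining ones from Proposition \ref{char_VB_dir}, and (3), (5), (7) are the three remaining ones from Proposition \ref{char_sub_alg}.

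Since Propositions \ref{char_VB_dir} and \ref{char_sub_alg} already carry all the work, I expect no serious obstacle in this argument; the single point deserving a line of care is to verify that nothing is double-counted when merging the two lists. Here the connection $\nabla$ occurring in condition (2) is the $\Gamma(Q)$-connection on $B$ coming from the split Lie $2$-algebroid, whereas the one in condition (3) is the $\Gamma(B)$-connection on $Q$ coming from the self-dual $2$-representation, so these are genuinely distinct requirements, consistently with the abuse of notation fixed in Definition \ref{matched_pairs}; likewise $\partial_B$ and $\partial_Q$ in (1) act on $U^\circ\subseteq Q^*$ but land in $B$ and in $Q$ respectively. I would also remark, in analogy with Corollary \ref{LA_on_U}, that the Lie algebroid structures induced on $U$ and on $B'$ are independent of the chosen adapted Lagrangian splitting, although this independence plays no role in the characterisation itself.
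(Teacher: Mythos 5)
Your proposal is correct and is exactly the paper's route: the paper states Proposition \ref{char_LA_D} as the immediate conjunction of Propositions \ref{char_VB_dir} and \ref{char_sub_alg}, applied in a common Lagrangian splitting adapted to $D$, just as you do. Your remarks on the adapted splitting and on the two distinct connections in conditions (2) and (3) are accurate but add nothing beyond the paper's (implicit) argument.
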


Hence, we also have the following result.
\begin{corollary}
  VB-subalgebroids $(D\to U, B\to M)$ of a metric VB-algebroid
  $(\mathbb E\to Q,B\to M)$ are equivalent to wide coisotropic
  $[1]$-submanifolds of the corresponding Poisson $[2]$-manifold.

  LA-Dirac structures $(D\to U, B\to M)$ in an LA-Courant algebroid
  $(\mathbb E\to Q,B\to M)$ are equivalent to wide coisotropic Lie
  subalgebroids of the corresponding Poisson Lie 2-algebroid.
\end{corollary}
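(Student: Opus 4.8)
The plan is to deduce both equivalences from the explicit characterisations in Propositions \ref{char_sub_alg}, \ref{char_VB_dir} and \ref{char_LA_D}, together with the dictionary of Theorem \ref{main_crucial} and \S\ref{metricVBA}. First I would recall, exactly as in the proof of Corollary \ref{Lie_1_sub}, that under the equivalence of Theorem \ref{main_crucial} a wide $[1]$-submanifold $\mathcal U$ of $\mathcal M=Q[-1]\oplus B^*[-2]$ corresponds to a maximal isotropic double subbundle $D\subseteq\mathbb E$ with side $U\subseteq Q$, full side $B$ and core $U^\circ$: in a Lagrangian splitting adapted to $D$ (which exists by the corollary preceding Proposition \ref{char_VB_dir}) the subbundle $D$ is spanned over $B$ by the $\sigma_Q(u)$, $u\in\Gamma(U)$, and the $\tau^\dagger$, $\tau\in\Gamma(U^\circ)$, and the vanishing ideal $\mathcal I\subseteq C^\infty(\mathcal M)$ of $\mathcal U$ is the ideal generated by $\Gamma(U^\circ)$ in degree $1$ and by $\Gamma(B)$ in degree $2$; its homogeneous pieces are $\mathcal I_0=0$, $\mathcal I_1=\Gamma(U^\circ)$ and $\mathcal I_2=\Gamma(B)\oplus\Gamma(U^\circ\wedge Q^*)$.

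For the first equivalence I would show that $\mathcal U$ is coisotropic for the degree $-2$ Poisson bracket, i.e. $\{\mathcal I,\mathcal I\}\subseteq\mathcal I$, if and only if conditions (1), (2) and (4) of Proposition \ref{char_sub_alg} hold with $B'=B$ (condition (3) being vacuous then). Since $\{\cdot\,,\cdot\}$ is a degree $-2$ biderivation, a standard Leibniz argument reduces $\{\mathcal I,\mathcal I\}\subseteq\mathcal I$ to checking the bracket on the generators, so using the split formulae \eqref{2-poisson} one gets: $\{\tau_1,\tau_2\}=\langle\partial_Q\tau_1,\tau_2\rangle$ must lie in $\mathcal I_0=0$, which is $\partial_Q(U^\circ)\subseteq U$; $\{b,\tau\}=\nabla^*_b\tau$ must lie in $\mathcal I_1=\Gamma(U^\circ)$, which dualises to $\nabla_bu\in\Gamma(U)$ for all $u\in\Gamma(U)$; and $\{b_1,b_2\}=[b_1,b_2]-R(b_1,b_2)$ must lie in $\mathcal I_2$, and as $[b_1,b_2]\in\Gamma(B)$ automatically, this is $R(b_1,b_2)(U)\subseteq U^\circ$. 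These are precisely (1), (2), (4) of Proposition \ref{char_sub_alg} with $B'=B$, so that proposition gives: $\mathcal U$ is coisotropic if and only if $D\to U$ is a subalgebroid of $\mathbb E\to Q$, i.e. $(D\to U,B\to M)$ is a VB-subalgebroid of $(\mathbb E\to Q,B\to M)$.

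For the second equivalence, recall that an LA-Dirac structure $(D,U,B,M)$ in $\mathbb E$ is by definition a VB-Dirac structure that is in addition a subalgebroid of the VB-algebroid side. By Corollary \ref{Lie_1_sub} the VB-Dirac condition is equivalent to $\mathcal U$ being a wide Lie $1$-subalgebroid of $(\mathcal M,\mathcal Q)$ (equivalently, to conditions (4) and (6) of Proposition \ref{char_LA_D}), and by the first part of this proof the subalgebroid condition on $D\to U$ is equivalent to $\mathcal U$ being coisotropic for $\{\cdot\,,\cdot\}$ (equivalently, to conditions (1), (3) and (7) of Proposition \ref{char_LA_D}); under the equivalence of LA-Courant algebroids with Poisson Lie $2$-algebroids coming from Theorems \ref{Poisson_Lie_2_char} and \ref{LA-Courant}, both conditions refer to the same underlying $[1]$-submanifold $\mathcal U$ of $\mathcal M$. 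Hence $D$ is an LA-Dirac structure if and only if $\mathcal U$ is simultaneously a wide Lie $1$-subalgebroid and coisotropic, that is, a wide coisotropic Lie subalgebroid of the Poisson Lie $2$-algebroid; the seven conditions of Proposition \ref{char_LA_D} with $B'=B$ are then visibly the union of those produced by Corollary \ref{Lie_1_sub} and by the first part.

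The main obstacle is the bookkeeping in the first equivalence: correctly identifying the vanishing ideal of a wide $[1]$-submanifold and its generators inside the split $[2]$-manifold $Q[-1]\oplus B^*[-2]$, and justifying by the degree/Leibniz argument that coisotropy collapses to exactly the three generator brackets above; once this is set up, matching them against Proposition \ref{char_sub_alg} and assembling the LA-Dirac case is routine.
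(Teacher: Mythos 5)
Your proposal is correct and follows essentially the same route as the paper: the paper also identifies coisotropy of the wide $[1]$-submanifold with the three generator conditions $\partial_Q(U^\circ)\subseteq U$, $\nabla_b^*\tau\in\Gamma(U^\circ)$ and $R(b_1,b_2)\an{U}$ having image in $U^\circ$ via the split formulae \eqref{2-poisson}, concludes with Proposition \ref{char_sub_alg}, and obtains the LA-Dirac statement by combining with Corollary \ref{Lie_1_sub}. Your write-up merely makes explicit the vanishing ideal and the Leibniz reduction to generators, which the paper leaves as ``in a local splitting, we find easily''.
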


\begin{proof}
  Let $U[-1]$ be a $[1]$-submanifold of a Poisson $[2]$-manifold
  $(\mathcal M, \{\cdot\,,\cdot\})$.  Then $U[-1]$ is coisotropic if
  and only if $\mu^\star(\xi)=\mu^\star(\eta)=0$ imply
  $\mu^\star(\{\xi,\eta\})=0$ for all
  $\xi,\eta\in C^\infty(\mathcal M)$, where
  $\mu\colon Q[-1]\to\mathcal M$ is the inclusion.  In a local
  splitting, we find easily that this implies
  $\partial_Q(U^\circ)\subseteq U$, $\nabla_b^*\tau\in\Gamma(U^\circ)$
  for all $b\in\Gamma(B)$ and $\tau\in\Gamma(U^\circ)$, and the
  restriction to $U$ of $R(b_1,b_2)$ has image in $U^\circ$. By
  Proposition \ref{char_sub_alg}, we can conclude.
  The second claim follows with Corollary \ref{Lie_1_sub}.
\end{proof}

As a corollary of Theorem \ref{LA-Courant}, Proposition \ref{char_VB_dir} and
Proposition \ref{char_sub_alg}, we get the following theorem.  
\begin{theorem}
  Let $(\mathbb E, B, Q, M)$ be an LA-Courant algebroid and
  $(D,U,B,M)\subseteq \mathbb E$ a (wide) LA-Dirac structure in $\mathbb E$.

  Then $D$ is a double Lie algebroid with the VB-algebroid structure
  in Corollary \ref{VB_dir_is_VB_alg} and the VB-algebroid structure
  $(D\to U, B\to M)$.
\end{theorem}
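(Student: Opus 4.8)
The plan is to reduce the statement to the characterisation of decomposed double Lie algebroids by matched pairs of $2$-representations recalled in Section~\ref{matched_pair_2_rep_sec}: by \cite{GrJoMaMe18}, once we know that $(D\to U,B\to M)$ and $(D\to B,U\to M)$ are VB-algebroids, it suffices to exhibit \emph{one} linear splitting of the double vector bundle $(D;U,B;M)$ (which has core $C=U^\circ$ by Proposition~\ref{prop2}) for which the two induced $2$-representations — of $U\to M$ acting on $\partial_B\colon U^\circ\to B$ and of $B\to M$ acting on $\partial_Q\colon U^\circ\to U$ — form a matched pair as in Definition~\ref{matched_pair_2_rep}. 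The VB-algebroid part is already available: $(D\to B,U\to M)$ is a VB-algebroid by Corollary~\ref{VB_dir_is_VB_alg}, and $(D\to U,B\to M)$ is a VB-algebroid because the LA-Dirac condition makes $D\to U$ a Lie subalgebroid of the VB-algebroid $\mathbb E\to Q$ compatibly with the double vector bundle structure (Proposition~\ref{char_sub_alg}). So the real content is the matched-pair statement, and I would take as splitting the restriction to $D$ of a Lagrangian splitting $\Sigma$ of $\mathbb E$ adapted to $D$ (which exists by the corollary following Proposition~\ref{prop2}).

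By Theorem~\ref{LA-Courant}, this Lagrangian splitting produces a self-dual $2$-representation $(\nabla,\nabla^*,R)$ of $B$ on $\partial_Q\colon Q^*\to Q$ and a split Lie $2$-algebroid $(\partial_B,\lb\cdot\,,\cdot\rb,\nabla,\omega)$ that form a matched pair in the sense of Definition~\ref{matched_pairs}, i.e.\ (M1)--(M5) hold. Using the restrictions to $B$ of the sections $\sigma_Q(u)$, $\tau^\dagger$ (which span $D\to B$) and the restrictions to $U$ of the sections $\sigma_B(b)$, $\tau^\dagger$ (which span $D\to U$), for $u\in\Gamma(U)$, $b\in\Gamma(B)$ and $\tau\in\Gamma(U^\circ)$, together with \eqref{ruth_D_to_B} on the Courant side and \eqref{2-poisson} on the VB-algebroid side, I would first read off that the $2$-representation of $U$ on $\partial_B\colon U^\circ\to B$ is the restriction of $(\nabla,\Delta,\ip{\cdot}\ip{\cdot}\omega)$ and the $2$-representation of $B$ on $\partial_Q\colon U^\circ\to U$ is the restriction of $(\nabla,\nabla^*,R)$, the base Lie algebroid on $U$ being the one of Corollary~\ref{LA_on_U}. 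That these restrictions are well defined is exactly the list of closure conditions of Proposition~\ref{char_LA_D} for a wide LA-Dirac structure: $\partial_Q(U^\circ)\subseteq U$, $\nabla$ preserves $U$ and $B$, $\nabla^*$ and $\Delta$ preserve $U^\circ$ (the latter because $\lb\Gamma(U),\Gamma(U)\rb\subseteq\Gamma(U)$), $\ip{u_2}\ip{u_1}\omega$ takes values in $\operatorname{Hom}(B,U^\circ)$, $R(b_1,b_2)$ takes values in $\operatorname{Hom}(U,U^\circ)$, and moreover $\omega$ vanishes on $\wedge^3U$.

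Next I would deduce (m1)--(m7) for these restricted $2$-representations from (M1)--(M5). The point is that each matched-pair equation of Definition~\ref{matched_pairs}, evaluated on arguments in $\Gamma(U)$, $\Gamma(U^\circ)$ and $\Gamma(B)$, equals the corresponding equation of Definition~\ref{matched_pair_2_rep} up to correction terms that all vanish, because each such term is, or reduces to, a pairing between a section of $U$ and a section of $U^\circ$, or between a section of $U^\circ$ and $\partial_Q$ of a section of $U^\circ$, possibly post-composed with $\rho_Q^*\dr$, $\partial_B^*$ or $\partial_Q$. Concretely, using $\rho_B\circ\partial_B=\rho_Q\circ\partial_Q$ and the reformulations in Remark~\ref{remark_simplifications}: (m1) is \eqref{almost_C}, whose right-hand side $\rho_Q^*\dr\langle\tau_1,\partial_Q\tau_2\rangle$ vanishes on $\Gamma(U^\circ)$; (m2) is (M1), since $\partial_B^*\langle\tau,\nabla_\cdot q\rangle=0$ on restricted arguments; (m3) is (M2) and (m6) is (M3) verbatim; (m4) is \eqref{(LC10)}, since $\langle\nabla_{\nabla_\cdot b}q,\tau\rangle=0$; (m5) is (M4), since $\partial_B^*\langle R(\cdot,b)q_1,q_2\rangle=0$; and (m7) follows from (M5) because on $\Gamma(U)$-arguments the entire right-hand side of the expanded form of (M5) in Remark~\ref{remark_simplifications} vanishes (it is a pairing of elements of $\operatorname{Hom}(U,U^\circ)$ against $\Gamma(U)$), while the left-hand side reorganises, using that $\omega$ vanishes on $\wedge^3U$, into the equality $\dr_{\nabla^A}R_B=\dr_{\nabla^B}R_A$ for the restricted connections. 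Then the two restricted $2$-representations form a matched pair, so by \cite{GrJoMaMe18} the pair $(D\duer U,D\duer B)$ is a Lie bialgebroid over $(U^\circ)^*$, and therefore $(D;U,B;M)$ is a double Lie algebroid with the two VB-algebroid structures in the statement.

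The hard part will be precisely this last deduction, and within it the identification of (m7) with (M5): one must match, term by term after restriction, the two genuinely different twisted de Rham differentials appearing there — one along the Lie algebroid $U$, twisted by the split-Lie-$2$-algebroid connection $\nabla$ on $B$ and by the Dorfman connection $\Delta$ on $U^\circ$; one along $B$, twisted by the self-dual $2$-representation — and check that every mixed term in which $R$ or $\omega$ is evaluated partly outside $U^\circ$ drops out. Everything else is a bounded, if lengthy, verification that the relevant pairings vanish, and is essentially forced once the two restricted $2$-representations have been correctly identified.
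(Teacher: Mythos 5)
Your proposal is correct and follows essentially the same route as the paper's proof: choose a (Lagrangian) splitting of $\mathbb E$ adapted to $D$, identify the two restricted $2$-representations on the sides of $D$ with core $U^\circ$, and check that (M1)--(M5) together with \eqref{almost_C} and \eqref{(LC10)} restrict to (m1)--(m7) via exactly the vanishing pairings you list, before invoking \cite{GrJoMaMe18}.
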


\begin{proof}
  Let us study the two linear Lie algebroid structures on $D$. Choose
  as before a linear splitting $\Sigma\colon B\times_MQ\to \mathbb E$
  that restricts to a linear splitting
  $\Sigma_D\colon U\times_MB\to D$ of $D$. The LA-Courant algebroid
  structure of $\mathbb E$ is then encoded as in Sections \ref{split_lie_2_rep} and
  \ref{metricVBA}, respectively, by a split Lie 2-algebroid
  $(\partial_B\colon Q^*\to B, \rho_Q\colon Q\to
  TM,\lb\cdot\,,\cdot\rb,\nabla,\omega)$ and by a self-dual
  $2$-representation $(\nabla,\nabla^*,R)$ of the Lie algebroid $B$ on
  $\partial_Q=\partial_Q^*\colon Q^*\to Q$. By Theorem
  \ref{LA-Courant}, the Dorfman $2$-representation and the
  $2$-representation form a matched pair as in Definition
  \ref{matched_pairs}.

  By Proposition \ref{char_VB_dir} and Corollary \ref{LA_on_U}, the
  restriction to $\Gamma(U)$ of the dull bracket on $\Gamma(Q)$ that
  is dual to $\Delta$ defines a Lie algebroid structure on $U$,
  $\omega\an{U\otimes U\otimes Q}$ can be seen as an element of
  $\Omega^2(U,\operatorname{Hom}(B,U^\circ))$ and since
  $\Delta_u\tau\in\Gamma(U^\circ)$ for all $u\in\Gamma(U)$ and
  $\tau\in\Gamma(U^\circ)$, the Dorfman connection $\Delta$ restricts
  to a map
  $\Delta^D\colon\Gamma(U)\times\Gamma(U^\circ)\to\Gamma(U^\circ)$.
  Since $\Delta^D_u(f\tau)=f\Delta^D_u\tau+\rho_Q(u)(f)\tau$ and
  $\Delta^D_{fu}\tau=f\Delta^D_u\tau+\langle u,\tau\rangle\rho_Q^*\dr
  f=f\Delta_u\tau$ for $f\in C^\infty(M)$, we find that this
  restriction is in fact an ordinary connection.  Since
  $\omega(u_1,u_2,u_3)$ vanishes for all $u_1,u_2,u_3\in\Gamma(U)$, it
  is then easy to see that the restrictions to sections of $U$ and
  $U^\circ$ of \eqref{D1}, \eqref{omega_dorfman_curv} and of (iv) and
  (v) in the definition of a split Lie 2-algebroid define an ordinary
  $2$-representation.  By \eqref{ruth_D_to_B}, this $2$-representation
  $(\partial_B\colon U^\circ \to B, \nabla,\Delta^D,\omega\an{U\otimes
    U\otimes Q})$ of the Lie algebroid $U$ on
  $\partial_B\colon U^\circ \to B$ encodes the VB-algebroid structure
  that $D\to B$ inherits from the Courant algebroid $\mathbb E\to B$.

  In a similar manner, we find using Proposition \ref{char_sub_alg}
  that the self-dual $2$-representation $(\partial_Q\colon Q^*\to Q,
  \nabla, \nabla^*, R\in\Omega^2(B,Q^*\wedge Q^*))$ restricts to a
  $2$-representation $(\partial_Q\colon U^\circ \to U, \nabla^U\colon
  \Gamma(B)\times\Gamma(U)\to\Gamma(U), \nabla^{U^\circ}\colon
  \Gamma(B)\times\Gamma(U^\circ)\to\Gamma(U^\circ),
  R\in\Omega^2(B,\operatorname{Hom}(U,U^\circ)))$ of $B$.

  A study of the restrictions to sections of $U$ and $U^\circ$ of the
  equations in Definition \ref{matched_pairs} shows then that (M1)
  restricts to (m2) in \S\ref{matched_pair_2_rep_sec} since
  $\partial_B^*\langle \tau, \nabla^U\cdot u\rangle=0$ for all
  $u\in\Gamma(U)$ and $\tau\in\Gamma(U^\circ)$.  The equations (M2),
  and (M3) immediately yield (m3) and (m6), respectively.  (M4) restricts
  to (m5) since $\langle R(\cdot, b)u_1, u_2\rangle=0$ for all
  $u_1,u_2\in\Gamma(U)$ and $b\in\Gamma(B)$. (M5) restricts to (m7)
  since the right-hand side of (M5) in (1) of Remark
  \ref{remark_simplifications} vanishes.  Finally, \eqref{almost_C}
  restricts to (m1) and \eqref{(LC10)} restricts to (m4) since $\langle
  \nabla_{\nabla_\cdot b}u, \tau\rangle =0$ for all $b\in \Gamma(B)$,
  $u\in\Gamma(U)$ and $\tau\in\Gamma(U^\circ)$.  Thus, the two
  $2$-representations describing the sides of $D$ given the splitting
  $\Sigma_D$ form a matched pair, which implies that $D$ is a double
  Lie algebroid (see \cite{GrJoMaMe18} or
  \S\ref{matched_pair_2_rep_sec} for a quick summary of this paper).
\end{proof}

Note finally that with a different approach as the one adopted in this
paper, we could deduce the main result in \cite{GrJoMaMe18} from our
Theorem \ref{LA-Courant}. Once one has `directly' proved that for each
double Lie algebroid $(D, A, B,M)$ with core $C$, the direct sum over
$B$ of $D$ and $D\duer B$ defines an LA-Courant algebroid
$(D\oplus_B(D\duer B), A\oplus C^*, B,M)$ as in
\S\ref{Poi_lie_2_def_by_matched_ruth}, then one can use the last
theorem to deduce the equations in \S\ref{matched_pair_2_rep_sec} from
the ones in Definition \ref{matched_pairs} and in Remark
\ref{remark_simplifications}: by construction, the double vector
subbundle $D$ of $D\oplus_B(D\duer B)$ is a VB-Dirac structure in
$D\oplus_B(D\duer B)\to B$ and a linear Lie subalgebroid in
$D\oplus_B(D\duer B)\to A\oplus C^*$.  Instead, we have chosen to use
the main theorem in \cite{GrJoMaMe18} to prove that
$(D\oplus_B(D\duer B), A\oplus C^*, B,M)$ is an LA-Courant algebroid,
see \S\ref{Poi_lie_2_def_by_matched_ruth}. By the complexity of
Li-Bland's definition of an LA-Courant algebroid, this is the most
simple approach.

\subsection{Pseudo-Dirac structures}
We explain here the notion of pseudo-Dirac structures that was
introduced in \cite{Li-Bland12,Li-Bland14} and we compare it with our approach to
VB- and LA-Dirac structures in the tangent of a Courant algebroid. Consider a
VB-Courant algebroid $\mathbb E$
with core $Q^*$, and a double vector subbundle
in $\mathbb E$ with core $K$, as in the following diagrams.
\begin{equation*}
\begin{xy}
\xymatrix{\mathbb E\ar[r]\ar[d]& Q \ar[d]\\
B\ar[r]&M
}
\end{xy}\qquad \qquad \begin{xy}
\xymatrix{D\ar[r]\ar[d]& U\ar[d]\\
B\ar[r]&M
}
\end{xy}
\end{equation*}
Consider the restriction $\mathbb E\an{U}$ of $\mathbb E$ to $U$; 
i.e. $\mathbb E\an{U}=\pi_Q\inv(U)$. 
This is a double vector bundle with sides $B$ and $U$ and
with core $Q^*$. The \textbf{total quotient} of $\mathbb E\an{U}$ by $D$ 
is the map $\mathsf q$ from 
\begin{equation*}
\begin{xy}
\xymatrix{\mathbb E\an{U}\ar[r]\ar[d]& Q \ar[d]\\
B\ar[r]&M
}
\end{xy}
\quad \text{
to
}
\quad
\begin{xy}
\xymatrix{Q^*/K\ar[r]\ar[d]& 0^M \ar[d]\\
0^M\ar[r]&M,
}
\end{xy}
\end{equation*}
defined by
\[\mathsf q(e)=\bar\tau \Leftrightarrow e-\tau^\dagger\in D.
\]
After the choice of a linear splitting of $\mathbb E$ that is adapted
to $D$, we know that each element of $\mathbb E\an{U}$ can be written
$\sigma_Q(u)(b_m)+\tau^\dagger(b_m)$ for some $u\in\Gamma(U)$,
$\tau\in\Gamma(Q^*)$ and $b_m\in B$. The image of
$\sigma_Q (u)(b_m)+\tau^\dagger(b_m)$ under $\mathsf q$ is then simply
$\bar\tau(m)$. Conversely it is easy to see that $D$ can be recovered
from $\mathsf q$.  Recall that if
$e_1=\sigma_Q(u_1)(b_m)+\tau_1^\dagger(b_m)$ and
$e_2=\sigma_Q(u_2)(b_m)+\tau_2^\dagger(b_m)\in \mathbb E$, then
\begin{equation*}
\begin{split}
  \langle e_1, e_2\rangle&=\langle \sigma_Q(u_1)(b_m)+\tau_1^\dagger(b_m), \sigma_Q(u_2)(b_m)+\tau_2^\dagger(b_m)\rangle\\
  &=\ell_{\Lambda(u_1,u_2)}(b_m)+\langle u_1(m),
  \tau_2(m)\rangle+\langle u_2(m), \tau_1(m)\rangle.
\end{split}
\end{equation*}
In particular,
$\langle e_1, e_2\rangle=\langle \pi_Q(e_1), \mathsf
q(e_2)\rangle+\langle \pi_Q(e_2), \mathsf q(e_1)\rangle$ for all
$e_1, e_2\in \mathbb E\an{U}$ if and only if $\Lambda\an{U\otimes U}$
vanishes and $K=U^\circ$, i.e.~if and only if $D$ is maximal isotropic
(Proposition \ref{prop2}).

\medskip
Now we recall Li-Bland's definition of a pseudo-Dirac structure \cite{Li-Bland14}.
\begin{definition}\label{davids_pseudo}
  Let $\mathsf E\to M$ be a Courant algebroid. A pseudo-Dirac
  structure is a pair $(U,\nabla^p)$ consisting of a subbundle
  $U\subseteq \mathsf E$ together with a map $\nabla^p\colon
  \Gamma(U)\to\Omega^1(M,U^*)$ satisfying
\begin{enumerate}
\item $\nabla^p(fu)=f\nabla^p u+\dr f\otimes\langle u,\cdot\rangle$,
\item $\dr\langle u_1,u_2\rangle=\langle\nabla^p u_1, u_2\rangle+\langle u_1, \nabla^p u_2\rangle$,
\item $\lb u_1,u_2\rb_p:=\lb u_1, u_2\rb_{\mathsf E}-\rho^*\langle
  \nabla^p u_1, u_2\rangle$ defines a bracket
  $\Gamma(U)\times\Gamma(U)\to\Gamma(U)$,
\item and 
\begin{multline}\label{alternative_curvature}
(\langle \lb u_1, u_2\rb_p,\nabla^p u_3\rangle+\ip{\rho(u_1)}\dr\langle\nabla^p u_2, u_3\rangle)+{\rm c.p.}\\
+\dr\left(\left\langle \nabla^p_{\rho(u_1)}u_2-\nabla^p_{\rho(u_2)}u_1, u_3
\right\rangle-\left\langle\lb u_1,u_2\rb_p, u_3
\right\rangle\right)=0
\end{multline}
\end{enumerate}
for all $u_1,u_2,u_3\in\Gamma(U)$ and $f\in C^\infty(M)$.
\end{definition}
Consider the tangent double $(T\mathsf E, TM, \mathsf E, M)$ where
$\mathsf E$ is a Courant algebroid over $M$. Choose a linear (wide)
Dirac structure $D$ in $T\mathsf E$, over the side
$U\subseteq \mathsf E$ and a metric connection
$\nabla\colon\mx(M)\times\Gamma(\mathsf E)\to \Gamma(\mathsf E)$ that
is adapted to $D$. Li-Bland defines the pseudo-Dirac structure
associated to $D$ \cite{Li-Bland14} as the map
$\nabla^p\colon\Gamma(U)\to\Omega^1(M,U^*)$ that is defined by
$\nabla^pu=\mathsf q\circ Tu$ for all $u\in\Gamma(U)$. By definition
of $\sigma^\nabla_{\mathsf E}$, we have
$Tu=\sigma^\nabla_{\mathsf E}(u)+\widetilde{\nabla_\cdot u} $ and we
find that
$\nabla^pu(v_m)=\overline{\nabla_{v_m}u}=[\nabla]_{v_m}u$. The
pseudo-Dirac structure is nothing else than the invariant part of the
metric connection that is adapted to $D$ (Remark
\ref{invariant_part}).  Condition (2) in Definition
\ref{davids_pseudo} is then
\begin{equation}\label{pseudo}
\dr\langle u_1,u_2\rangle
=\langle Tu_1, Tu_2\rangle_{T\mathsf E}
=\langle u_1, \nabla^pu_2\rangle+\langle u_2 , \nabla^pu_1\rangle
\end{equation}
for all $u_1,u_2\in\Gamma(U)$ and Condition (1) is
\begin{equation}\label{pseudo2}
  \nabla^p(\varphi\cdot u)=\overline{\nabla_\cdot (\varphi\cdot u)}
  =\varphi\cdot \overline{\nabla_\cdot u}+\dr\varphi\otimes \overline{u}
=\varphi\cdot\nabla^pu+\dr\varphi\otimes \overline{u}.
\end{equation}
The bracket $\lb\cdot\,,\cdot\rb_p$ is then 
\[\lb u_1, u_2\rb_p=\lb u_1, u_2\rb_{\mathsf
  E}-\rho^*\langle\nabla^pu_1,u_2\rangle=\lb u_1, u_2\rb_{\mathsf
  E}-\rho^*\langle\nabla_\cdot u_1,u_2\rangle=\lb u_1, u_2\rb_\nabla,
\]
the bracket defined in \eqref{dull_corretion}.  Finally, a straightforward
computation shows that the left-hand side of
\eqref{alternative_curvature} equals $R_\Delta^{\rm
  bas}(u_1,u_2)^*u_3\in\Gamma(B^*)$, which is zero by Proposition
\ref{char_VB_dir}.  Li-Bland proves that the bracket
$\lb\cdot\,,\cdot\rb_p$ defines a Lie algebroid structure on $U$. More
explicitly, he finds that the left-hand side $\Psi(u_1,u_2,u_3 )$ of
\eqref{alternative_curvature} defines a tensor
$\Psi\in\Omega^3(U,T^*M)$ that is related as follows to the Jacobiator of
$\lb\cdot\,,\cdot\rb_p$:
$\operatorname{Jac}_{\lb\cdot\,,\cdot\rb_p}=(\Beta\inv\circ\rho_{\mathsf
  E}^*)\Psi$. He proves so that (wide) linear Dirac structures in
$T\mathsf E$ are in bijection with pseudo-Dirac structures on $\mathsf
E$. Hence, our result in Proposition \ref{char_VB_dir} is a
generalisation of Li-Bland's result to linear Dirac structures in
general VB-Courant algebroids.

Further, our Theorem \ref{char_sub_alg} can be formulated as follows
in Li-Bland's setting. 
\begin{theorem}
  In the correspondence of linear Dirac structures with pseudo-Dirac
  connections in \cite{Li-Bland14}, LA-Dirac structures correspond
  to pseudo-Dirac connections $(U,\nabla^p)$ such that
\begin{enumerate}
\item $U\subseteq \mathsf E$ is an isotropic (or `quadratic') subbundle,
  i.e.~$U^\perp\subseteq U$,
\item $\nabla^p$ sends $U^\perp$ to zero and so, by Condition (2) in
  Definition \ref{davids_pseudo}, has image in $U/U^\perp\subseteq
  \mathsf E/U^\perp\simeq U^*$,
\item the induced ordinary connection $\overline{\nabla^p}\colon
  \Gamma(U/U^\perp)\to\Omega^1(M,U/U^\perp)$ is flat.
\end{enumerate} 
\end{theorem}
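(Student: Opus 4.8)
The plan is to reduce the statement to Proposition \ref{char_sub_alg} and then to translate the four conditions it produces into the three conditions of the theorem. Recall from the discussion preceding the theorem (and from Proposition \ref{char_VB_dir} in the wide case) that a pseudo-Dirac structure $(U,\nabla^p)$ on $\mathsf E$ is exactly the same datum as a wide linear Dirac structure $D$ in the Courant algebroid $T\mathsf E\to TM$, with $\nabla^p=[\nabla]$ the invariant part (Remark \ref{invariant_part}) of any metric connection $\nabla$ adapted to $D$. By definition such a $D$ is an LA-Dirac structure in the LA-Courant algebroid $T\mathsf E$ precisely when $(D\to U,\,TM\to M)$ is in addition a subalgebroid of the tangent Lie algebroid $(T\mathsf E\to\mathsf E,\,TM\to M)$, so it suffices to characterise this extra requirement.

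By Example \ref{self-dual-metric-conn}, in the Lagrangian decomposition determined by an adapted metric connection $\nabla$ the metric VB-algebroid $(T\mathsf E\to\mathsf E,\,TM\to M)$ is encoded by the self-dual $2$-representation $(\partial_Q=\operatorname{Id}\colon\mathsf E^*\simeq\mathsf E\to\mathsf E,\ \nabla,\ \nabla,\ R_\nabla)$ of $TM$ on $\mathsf E$. I would therefore apply Proposition \ref{char_sub_alg} with $(\mathbb E,B,Q,M)=(T\mathsf E,TM,\mathsf E,M)$, with $D$ over $U\subseteq\mathsf E$ and $B'=TM$, and with this $2$-representation. Its four conditions then read: (1) $\partial_Q(U^\circ)\subseteq U$; (2) $\nabla_Xu\in\Gamma(U)$ for all $u\in\Gamma(U)$, $X\in\mx(M)$; (3) $[X_1,X_2]\in\mx(M)$, which is vacuous; and (4) $R_\nabla(X_1,X_2)$ restricts to a section of $\operatorname{Hom}(U,U^\circ)$.

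It remains to rewrite (1), (2), (4). Under the identification $\mathsf E^*\simeq\mathsf E$ given by the metric, $U^\circ$ corresponds to $U^\perp$ and $\partial_Q$ to the identity, so (1) becomes $U^\perp\subseteq U$, i.e.\ Condition (1) of the theorem. For (2) I would show, using $U^\perp\subseteq U$ and the metricity relation $\langle\nabla_X\cdot,\cdot\rangle+\langle\cdot,\nabla_X\cdot\rangle=\dr\langle\cdot\,,\cdot\rangle$ together with nondegeneracy, that ``$\nabla_Xu\in\Gamma(U)$ for all $u\in\Gamma(U)$'' is equivalent to ``$\nabla_Xw\in\Gamma(U^\perp)$ for all $w\in\Gamma(U^\perp)$'' (pair $\nabla_Xw$ against $\Gamma(U)$ for one direction, $\nabla_Xu$ against $\Gamma(U^\perp)$ for the other); since $\nabla^pw=\overline{\nabla_\cdot w}$ this is precisely Condition (2), and the assertion that the image of $\nabla^p$ then lies in $U/U^\perp$ follows from Condition (2) of Definition \ref{davids_pseudo} applied to $u\in\Gamma(U)$, $w\in\Gamma(U^\perp)$. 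Finally, once (1) and (2) hold $\nabla$ restricts to connections on $U$ and on $U^\perp$ and hence descends to $\overline{\nabla^p}$ on $U/U^\perp$, whose curvature on $\bar u$ is $\overline{R_\nabla(X_1,X_2)u}$; thus (4) is equivalent to flatness of $\overline{\nabla^p}$, which is Condition (3). Combining this with the pseudo-Dirac/wide-linear-Dirac correspondence finishes the proof.

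The one step that needs genuine care, rather than being a direct reading-off, is the translation of Proposition \ref{char_sub_alg}(2) into Condition (2): it uses both the isotropy $U^\perp\subseteq U$ furnished by (1) and the metric compatibility of $\nabla$, and one must observe that the reformulation ``$\nabla^p$ kills $U^\perp$'' is expressed through $\nabla^p=[\nabla]$ alone, so that the condition — like the rest of the characterisation — is independent of the choice of adapted metric connection.
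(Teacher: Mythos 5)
Your proposal is correct and follows essentially the paper's own route: the paper likewise obtains the theorem by applying Proposition \ref{char_sub_alg} to $(T\mathsf E, TM,\mathsf E,M)$ with the self-dual $2$-representation $(\operatorname{Id},\nabla,\nabla,R_\nabla)$ of an adapted metric connection, noting that $\nabla$ preserving $U$ is equivalent (by metricity) to $\nabla$ preserving $U^\perp$, i.e.\ to $\nabla^p$ killing $U^\perp$, and that $R_\nabla(X_1,X_2)u\in\Gamma(U^\perp)$ is equivalent to $R_{\overline{\nabla^p}}=0$. Your additional remark on independence of the choice of adapted connection matches Remark \ref{invariant_part} and is consistent with the paper.
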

We propose to call these pseudo-Dirac connections \textbf{quadratic
  pseudo-Dirac connections}.  Note that $\overline{\nabla^p}$ equals
$\bar\nabla\colon\mx(M)\times\Gamma(U/U^\perp)\to\Gamma(U/U^\perp)$
$\bar\nabla_X\bar u=\overline{\nabla_Xu}$, $u\in\Gamma(U)$ and
$X\in\mx(M)$, for any metric connection
$\nabla\colon\mx(M)\times\Gamma(U)\to\Gamma(U)$ such that
$[\nabla]=\nabla^p$. Such a connection must preserve $U$ by Condition
(2) in Proposition \ref{char_sub_alg}, and so also $U^\perp$ since it
is metric. The condition $R_\nabla(X_1,X_2)u\in\Gamma(U^\perp)$ for
all $X_1,X_2\in\mx(M)$ and $u\in\Gamma(U)$ in Proposition
\ref{char_sub_alg} is then equivalent to $R_{\bar\nabla}=0$.

\subsection{The Manin pair associated to an LA-Dirac structure}
Consider as before an LA-Courant algebroid $\mathbb E$ with sides $B$ and $Q$ and with core $Q^*$, and an LA-Dirac structure $D$
\begin{equation*}
\begin{xy}
  \xymatrix{\mathbb E\ar[r]\ar[d]&Q\ar[d]\\
    B\ar[r]&M }
\end{xy}
\qquad \qquad 
\begin{xy}
\xymatrix{D\ar[r]\ar[d]&U\ar[d]\\
B\ar[r]&M
}
\end{xy}
\end{equation*}
in $\mathbb E$ with core $U^\circ$.  Since $\partial_Q$ restricts to a
map from $U^\circ $ to $U$, we can define the vector bundle
\[\mathbb B=\frac{U\oplus
  Q^*}{\operatorname{graph}(-\partial_Q\an{U^\circ})}\to M.
\]
This vector bundle is anchored by the map
\[\rho_{\mathbb B}\colon\mathbb B\to TM, \qquad \rho_{\mathbb B}(u\oplus
\tau)=\rho_Q(u+\partial_Q\tau)=\rho_Q(u)+\rho_B(\partial_B\tau).
\]
Note that this map is well-defined because 
\[ \rho_{\mathbb
  B}(-\partial_Q\tau\oplus\tau)=\rho_Q(-\partial_Q\tau+\partial_Q\tau)=0
\]
for all $\tau\in U^\circ$.  We will show that there is a symmetric
non-degenerate pairing $\langle \cdot\,,\cdot\rangle_{\mathbb B}$ on
$\mathbb B\times_M\mathbb B$ and a bracket
$\lb\cdot\,,\cdot\rb_{\mathbb B}$ on $\Gamma(\mathbb B)$ such that
\[ (\mathbb B\to M, \rho_{\mathbb B}, \langle
\cdot\,,\cdot\rangle_{\mathbb B}, \lb\cdot\,,\cdot\rb_{\mathbb B})
\]
is a Courant-algebroid.
We define the pairing on $\mathbb B$ by
\begin{align*}
\langle u_1\oplus\tau_1, u_2\oplus\tau_2\rangle_{\mathbb B}
=\langle u_1,\tau_1\rangle +\langle u_2,\tau_2\rangle +\langle
\tau_1, \partial_Q\tau_2\rangle.
\end{align*}
It is easy to check that this pairing is well-defined and
non-degenerate and that the
induced 
map $\mathcal D_{\mathbb B}: C^\infty(M)\to\Gamma(\mathbb B)$
given by 
\[\langle \mathcal D_{\mathbb B}f, u\oplus\tau\rangle_{\mathbb
  F}=\rho_{\mathbb B}(u\oplus\tau)(f)\]
can alternatively be defined by 
$ \mathcal D_{\mathbb B}f=0\oplus \rho_Q^*\dr f$.

Choose as before a Lagrangian splitting of $\mathbb E$ that is adapted
to $D$, and recall that the linear Courant algebroid structure and the
linear Lie algebroid structure on $\mathbb E$ are then encoded by a
split Lie 2-algebroid and by a self-dual $2$-representation,
respectively, both denoted as usual.  We define the bracket on
$\Gamma(\mathbb B)$ by
\begin{equation}\label{C_bracket}
\begin{split}
  &\lb u_1\oplus\tau_1, u_2\oplus\tau_2\rb_{\mathbb B}\\
  =&(\lb u_1,
  u_2\rb_U+\nabla_{\partial_B\tau_1}u_2-\nabla_{\partial_B\tau_2}u_1 )
  \oplus (\lb \tau_1,
  \tau_2\rb_{Q^*}+\Delta_{u_1}\tau_2-\Delta_{u_2}\tau_1+\rho_Q^*\dr\langle
  \tau_1, u_2\rangle ).
\end{split}
\end{equation}
A quick computation as the one at the end of the proof of Theorem \ref{core_courant}
show that this bracket does not depend on the
choice of Lagrangian splitting.

\begin{theorem}\label{thm_CA_sd}
  Let $(D,U,B,M)$ be an LA-Dirac structure in a LA-Courant algebroid
  $(\mathbb E,Q,B,M)$.  Then the vector bundle \[ \mathbb
  B=\frac{U\oplus
    Q^*}{\operatorname{graph}(-\partial_Q\an{U^\circ})}\to M,\] with
  the anchor $\rho_{\mathbb B}$, the pairing $\langle
  \cdot\,,\cdot\rangle_{\mathbb B}$ and the bracket
  $\lb\cdot\,,\cdot\rb_{\mathbb B}$, is a Courant algebroid.
Further, $U$ is a Dirac structure in $\mathbb B$, via the inclusion $U\hookrightarrow\mathbb B$, 
$u\mapsto u\oplus 0$.
\end{theorem}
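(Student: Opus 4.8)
The plan is to fix once and for all a Lagrangian splitting of $\mathbb E$ adapted to $D$ and to work entirely inside the induced matched pair. By Theorem \ref{LA-Courant} this splitting yields a self-dual $2$-representation $(\nabla,\nabla^*,R)$ of the Lie algebroid $B$ on $\partial_Q=\partial_Q^*\colon Q^*\to Q$ and a split Lie $2$-algebroid $(\partial_B\colon Q^*\to B,\lb\cdot\,,\cdot\rb,\nabla,\omega)$ forming a matched pair as in Definition \ref{matched_pairs}; and since $(D,U,B,M)$ is \emph{wide}, Proposition \ref{char_LA_D} (with $B'=B$) gives $\partial_Q(U^\circ)\subseteq U$, $\nabla_bu\in\Gamma(U)$, $\lb u_1,u_2\rb\in\Gamma(U)$, $\omega(u_1,u_2,u_3)=0$, $\Delta_u\tau\in\Gamma(U^\circ)$ and $R(b_1,b_2)u\in\Gamma(U^\circ)$ for all $b,b_1,b_2\in\Gamma(B)$, $u,u_1,u_2,u_3\in\Gamma(U)$ and $\tau\in\Gamma(U^\circ)$. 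Besides the matched-pair equations I would keep at hand the three structures already produced: the Lie algebroid $(U,\rho_Q\an{U},\lb\cdot\,,\cdot\rb\an{\Gamma(U)})$ of Corollary \ref{LA_on_U}, the degenerate Courant algebroid $(Q^*,\rho_{Q^*}=\rho_Q\partial_Q,\mathcal D=\rho_Q^*\dr,\langle\cdot\,,\partial_Q\cdot\rangle,\lb\cdot\,,\cdot\rb_{Q^*})$ of Theorem \ref{core_courant}, and the fact that $\Delta$ restricts to a connection $\Gamma(U)\times\Gamma(U^\circ)\to\Gamma(U^\circ)$; all of these appear explicitly in \eqref{C_bracket}.

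I would first dispose of the algebraic points. Condition (1) of Proposition \ref{char_LA_D} makes $-\partial_Q\an{U^\circ}\colon U^\circ\to U$ a well-defined bundle map, so $\mathbb B$, $\rho_{\mathbb B}$ and $\mathcal D_{\mathbb B}$ make sense, and a rank count gives $\rk\mathbb B=2\rk U$. That $\langle\cdot\,,\cdot\rangle_{\mathbb B}$ descends to $\mathbb B$, is symmetric, and is non-degenerate is immediate from $\partial_Q=\partial_Q^*$ and from $U^\circ$ being the annihilator of $U$; then $\langle\mathcal D_{\mathbb B}f,u\oplus\tau\rangle_{\mathbb B}=\rho_{\mathbb B}(u\oplus\tau)(f)$ follows from $\langle q,\rho_Q^*\dr f\rangle=\rho_Q(q)(f)$, with $\mathcal D_{\mathbb B}f=0\oplus\rho_Q^*\dr f$. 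Next, the right-hand side of \eqref{C_bracket} lies in $\Gamma(U\oplus Q^*)$ because $\nabla_bu\in\Gamma(U)$ for all $b$, and it is independent of the choice of representative $u_i\oplus\tau_i$ modulo $\operatorname{graph}(-\partial_Q\an{U^\circ})$: replacing $(u_1,\tau_1)$ by $(u_1-\partial_Q\eta,\tau_1+\eta)$, $\eta\in\Gamma(U^\circ)$, the $Q^*$-part changes by $-\nabla^*_{\partial_B\tau_2}\eta-\Delta_{u_2}\eta$, which lies in $\Gamma(U^\circ)$ by \eqref{dualdd} together with $\nabla_bu\in\Gamma(U)$ and $\lb u_1,u_2\rb\in\Gamma(U)$, while the $U$-part changes by $-\partial_Q$ of the former, using (M1), $\partial_Q\nabla^*=\nabla\partial_Q$ and $\langle\eta,\nabla_\cdot u\rangle=0$. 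Independence of the chosen adapted Lagrangian splitting follows verbatim as at the end of the proof of Theorem \ref{core_courant}: such a change is a $\phi\in\Gamma(Q^*\wedge Q^*\otimes B^*)$ with $\phi\an{U}=0$ (proof of Corollary \ref{LA_on_U}), and by Remark 2.12 of \cite{GrJoMaMe18} and Proposition 4.7 of \cite{Jotz17b} the induced shifts of $\nabla$ and $\Delta$ cancel in \eqref{C_bracket}. Since $\langle\cdot\,,\cdot\rangle_{\mathbb B}$ is non-degenerate, it now suffices, by the remark after the definition of a Courant algebroid, to establish (CA1), (CA2) and (CA3); (CA4) and (CA5) then follow automatically.

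Axiom (CA3) is immediate: in $\lb e_1,e_2\rb_{\mathbb B}+\lb e_2,e_1\rb_{\mathbb B}$ the $U$-part vanishes by skew-symmetry of $\lb\cdot\,,\cdot\rb_U$ and pairwise cancellation of the $\nabla$-terms, and the $Q^*$-part collapses, via (CA3) for the core $Q^*$ and cancellation of the $\Delta$-terms, to $\rho_Q^*\dr\langle e_1,e_2\rangle_{\mathbb B}$, i.e. to $\mathcal D_{\mathbb B}\langle e_1,e_2\rangle_{\mathbb B}$. Axiom (CA2) is a slightly longer expansion, using (CA2) for $Q^*$, the metricity $\langle\nabla_bq,\tau\rangle+\langle q,\nabla^*_b\tau\rangle=\rho_B(b)\langle q,\tau\rangle$ of the self-dual $2$-representation, the compatibility \eqref{dualdd} of $\Delta$ with the dull bracket, $\rho_Q\partial_Q=\rho_B\partial_B$ and $\nabla\partial_Q=\partial_Q\nabla^*$. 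The real work is the Jacobi identity (CA1): I would expand the Jacobiator $\lb e_1,\lb e_2,e_3\rb_{\mathbb B}\rb_{\mathbb B}-\lb\lb e_1,e_2\rb_{\mathbb B},e_3\rb_{\mathbb B}-\lb e_2,\lb e_1,e_3\rb_{\mathbb B}\rb_{\mathbb B}$ component by component, in the very same style as the verification of (CA1) in the proof of Theorem \ref{core_courant}. The $Q^*$-component absorbs its $\lb\cdot\,,\cdot\rb_{Q^*}$-part by (CA1) for $Q^*$, and all remaining mixed $\Delta/\nabla$-contributions by the matched-pair equations (M1)--(M4), by \eqref{D1}, \eqref{omega_dorfman_curv}, \eqref{almost_C}, \eqref{(LC10)}, and by the curvature identities $R_\Delta(q_1,q_2)\tau=\langle\ip{q_2}\ip{q_1}\omega,\partial_B\tau\rangle$ and $R_\nabla(b_1,b_2)\tau=R(b_1,b_2)\partial_Q\tau$; the $U$-component reduces to the Jacobi identity for the Lie algebroid $U$ together with the curvature identities $R_{\nabla}(b_1,b_2)u=\partial_Q R(b_1,b_2)u$, $[\partial_B\tau_1,\partial_B\tau_2]=\partial_B\lb\tau_1,\tau_2\rb_{Q^*}$ and $\Jac_{\lb\cdot\,,\cdot\rb}=\partial_B^*\circ\omega$. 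Every term in which $\omega$ is evaluated on three sections of $U$, or in which $R(b_1,b_2)$ is paired against $U$ on both its endomorphism inputs, drops out by $\omega(u_1,u_2,u_3)=0$ and $R(b_1,b_2)u\in\Gamma(U^\circ)$ — this is precisely where being LA-Dirac, rather than merely a matched pair, enters. I expect this bookkeeping to be the bulk of the proof, but it presents no phenomenon beyond those already met in the core computation.

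For the Manin-pair statement: the map $U\to\mathbb B$, $u\mapsto u\oplus0$, is a well-defined injective morphism of vector bundles (if $(u,0)\in\operatorname{graph}(-\partial_Q\an{U^\circ})$ then $u=0$); its image has rank $\rk U=\tfrac12\rk\mathbb B$ and is isotropic since $\langle u_1\oplus0,u_2\oplus0\rangle_{\mathbb B}=0$, hence it is maximal isotropic. Setting $\tau_1=\tau_2=0$ in \eqref{C_bracket} kills every cross term, so $\lb u_1\oplus0,u_2\oplus0\rb_{\mathbb B}=\lb u_1,u_2\rb_U\oplus0\in\Gamma(U)$ by Corollary \ref{LA_on_U}; thus $U$ is involutive. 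Therefore $U$ is a Dirac structure in the Courant algebroid $\mathbb B$, and the proof is complete.
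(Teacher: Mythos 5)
Your plan follows the paper's own route: fix a Lagrangian splitting adapted to $D$, work with the matched pair of the self-dual $2$-representation and the split Lie $2$-algebroid, reuse the degenerate Courant algebroid on $Q^*$ and the Lie algebroid $U$, check (CA1)--(CA3) and invoke the nondegeneracy of $\langle\cdot\,,\cdot\rangle_{\mathbb B}$ for (CA4)--(CA5). Your treatment of the quotient, the pairing, $\mathcal D_{\mathbb B}$, the first-slot well-definedness of \eqref{C_bracket} (via (M1), $\partial_Q\nabla^*=\nabla\partial_Q$ and $\langle\eta,\nabla_\cdot u\rangle=0$) and the final Dirac statement for $U$ are all correct and match the appendix.

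The one genuine gap is in how you expect the Jacobi identity (and, similarly, splitting-independence) to close. The Jacobiator of representatives in $\Gamma(U\oplus Q^*)$ is \emph{not} zero after the cancellations you list: what survives is
$(-\partial_Q\upsilon)\oplus\upsilon$ with
$\upsilon=\bigl(R(\partial_B\tau_1,\partial_B\tau_2)u_3-\langle\ip{u_2}\ip{u_1}\omega,\partial_B\tau_3\rangle\bigr)+\text{c.p.}$,
and these terms are neither of the form ``$\omega$ on three sections of $U$'' nor ``$R(b_1,b_2)$ paired against $U$ on both inputs'', so they do not drop out. The LA-Dirac hypotheses enter here in a different way than you describe: they force $\upsilon\in\Gamma(U^\circ)$ (image of $\ip{u_2}\ip{u_1}\omega$ in $U^\circ$, $R(b_1,b_2)U\subseteq U^\circ$), and the identity $R_\nabla=\partial_Q\circ R$ is what makes the $U$-component equal to $-\partial_Q\upsilon$; only then is the residue an element of $\operatorname{graph}(-\partial_Q\an{U^\circ})$, hence zero in the quotient $\mathbb B$ but not in $U\oplus Q^*$. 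The same phenomenon occurs for the change of adapted Lagrangian splitting: the shifts of $\nabla$ and $\Delta$ do not cancel in \eqref{C_bracket}; they combine into the graph element $(-\partial_Q\eta)\oplus\eta$ with $\eta=\phi(\partial_B\tau_2,u_1)-\phi(\partial_B\tau_1,u_2)\in\Gamma(U^\circ)$. If you carry out your bookkeeping expecting on-the-nose cancellation you will be left with an unexplained remainder, so this ``zero only modulo the graph'' mechanism must be built into the argument (it is the only phenomenon here that goes beyond the computation for Theorem \ref{core_courant}). A minor point in the same spirit: well-definedness in the second slot of the bracket should be recorded explicitly, e.g. by combining your first-slot computation with (CA3) at the level of representatives and the fact that graph elements pair to zero with everything.
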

The proof of Theorem \ref{thm_CA_sd} can be found in Appendix
\ref{proof_of_manin_ap}.

\begin{corollary}\label{disapointing_cor}
  Let $(D,U,B,M)$ be an LA-Dirac structure in an LA-Courant algebroid
  $(\mathbb E, B, Q, M)$ (with core $Q^*$). The Manin pair $(\mathbb B, U)$ 
defined in Theorem \ref{thm_CA_sd} and the degenerate Courant algebroid $Q^*$ satisfy the following conditions:
\begin{enumerate}
\item There is a morphism $\psi\colon
  Q^*\to \mathbb B$ of degenerate Courant algebroids and an embedding $\iota\colon U\to Q$ over the identity on $M$
\item $\iota$ is compatible with the anchors: $\rho_Q\circ\iota=\rho_{\mathbb B}\an{U}$,
\item $\psi(Q^*)+U=\mathbb B$ and
\item $\langle \psi(\tau), u\rangle_{\mathbb B}=\langle  \iota(u), \tau\rangle$ for all
  $\tau\in Q^*$ and $u\in U$.
\end{enumerate}
\end{corollary}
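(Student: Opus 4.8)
The plan is to unwind the definitions of all four objects in terms of the Lagrangian splitting fixed in Theorem \ref{thm_CA_sd} and then check each of the four conditions by a direct computation, using the matched-pair equations (M1)--(M5) only where needed. First I would define the candidate map $\psi\colon Q^*\to\mathbb B$ by $\psi(\tau)=0\oplus\tau$, i.e.~the composition $Q^*\hookrightarrow U\oplus Q^*\to\mathbb B$, and the embedding $\iota\colon U\to Q$ as the inclusion of the side $U$ into $Q$. With these choices, conditions (2), (3) and (4) are essentially immediate: (2) follows because $\rho_{\mathbb B}(0\oplus 0 \text{ with } U\text{-part }u)=\rho_Q(u)=(\rho_Q\circ\iota)(u)$ after noting the anchor $\rho_{\mathbb B}$ restricted to the image of $U$ is $\rho_Q\an U$; (3) is the obvious fact that $\psi(Q^*)+U=\{0\oplus\tau\}+\{u\oplus 0\}$ spans the quotient $\frac{U\oplus Q^*}{\graphe(-\partial_Q\an{U^\circ})}$; and (4) is read off directly from the formula for $\langle\cdot\,,\cdot\rangle_{\mathbb B}$, since $\langle 0\oplus\tau,\,u\oplus 0\rangle_{\mathbb B}=\langle u,\tau\rangle=\langle\iota(u),\tau\rangle$.

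The substantive part is condition (1): that $\psi$ is a morphism of degenerate Courant algebroids. This means checking that $\psi$ intertwines the anchors, the $\mathcal D$-maps and, crucially, the brackets. For the anchor: $\rho_{\mathbb B}(\psi(\tau))=\rho_{\mathbb B}(0\oplus\tau)=\rho_Q(\partial_Q\tau)=\rho_B(\partial_B\tau)$, using $\rho_Q\circ\partial_Q=\rho_B\circ\partial_B$ from Remark \ref{remark_simplifications}, and this agrees with $\rho_{Q^*}(\tau)=\rho_Q\partial_Q\tau$ from Theorem \ref{core_courant}. For the $\mathcal D$-maps: $\mathcal D_{\mathbb B}f=0\oplus\rho_Q^*\dr f=\psi(\rho_Q^*\dr f)=\psi(\mathcal D_{Q^*}f)$ by the alternative description of $\mathcal D_{\mathbb B}$ given just before the theorem. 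For the brackets, I would specialise the formula \eqref{C_bracket} to $u_1=u_2=0$: then $\lb 0\oplus\tau_1,\,0\oplus\tau_2\rb_{\mathbb B}=0\oplus\lb\tau_1,\tau_2\rb_{Q^*}$, which is exactly $\psi(\lb\tau_1,\tau_2\rb_{Q^*})$ — so $\psi$ is a bracket morphism by inspection. One should also record that $\psi$ is fibrewise well-defined, i.e.~that a generator $-\partial_Q\sigma\oplus\sigma$ of $\graphe(-\partial_Q\an{U^\circ})$ is not in the image of $\psi\colon\tau\mapsto 0\oplus\tau$ unless $\sigma=0$; this is automatic since the $U$-component of $\psi(\tau)$ is $0$.

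A minor subtlety worth addressing is that $\psi$ and $\iota$ together should fit the pattern of the preceding constructions: $\iota$ is the inclusion of the Dirac side $U$, and $\psi$ should be recognised as the natural map $Q^*\to\mathbb B$ realising $Q^*$ as (a quotient corresponding to) the ``other Lagrangian'' complementary to $U$ in $\mathbb B$ — this is the content of $\psi(Q^*)+U=\mathbb B$ together with $\langle\psi(\tau),u\rangle_{\mathbb B}=\langle\iota(u),\tau\rangle$, which says the pairing restricted to $\psi(Q^*)\times U$ is the canonical one. I would close by remarking that all four conditions are independent of the Lagrangian splitting: the maps $\psi$ and $\iota$ manifestly do not involve it, and the structures on $Q^*$ and $\mathbb B$ were already shown splitting-independent in Theorem \ref{core_courant} and in the paragraph preceding Theorem \ref{thm_CA_sd}. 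The main obstacle, such as it is, is purely bookkeeping: keeping the two pairings (on $Q^*$ and on $\mathbb B$) and the two anchors straight, and making sure the specialisations $u_i=0$ of \eqref{C_bracket} and of $\langle\cdot\,,\cdot\rangle_{\mathbb B}$ are applied consistently; no new matched-pair identity beyond $\rho_Q\partial_Q=\rho_B\partial_B$ is actually required.
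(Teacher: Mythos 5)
Your argument is correct and takes essentially the same route as the paper, whose proof simply defines $\psi(\tau)=0\oplus\tau$, declares it obviously a morphism of degenerate Courant algebroids, and notes that (1)--(4) are then immediate; your checks of the anchors, the $\mathcal D$-maps and the bracket (via the specialisation $u_1=u_2=0$ of \eqref{C_bracket}) just spell out those omitted details. The only point you leave implicit is the equally trivial pairing compatibility $\langle\psi(\tau_1),\psi(\tau_2)\rangle_{\mathbb B}=\langle\tau_1,\partial_Q\tau_2\rangle=\langle\tau_1,\tau_2\rangle_{Q^*}$, which completes the claim that $\psi$ is a morphism of degenerate Courant algebroids.
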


\begin{proof}
 Take an LA-Dirac structure $(D,U,B,M)$ in an LA-Courant
  algebroid\linebreak $(\mathbb E,Q,B,M)$.  The morphism $\psi\colon Q^*\to \mathbb B$ defined by
  $\psi(\tau)\mapsto 0\oplus \tau$ is obviously a morphism of
  degenerate Courant algebroids. Conditions (1)--(4) are then immediate.
\end{proof}

Conversely take a Manin pair $(\mathbb B,U)$ over $M$ satisfying with $Q^*$ the
conditions in Corollary \ref{disapointing_cor} and identify $U$ with a
subbundle of $Q$.  If $\tau\in U^\circ\subseteq Q^*$, then
$\psi(\tau)$ satisfies
\[\langle u, \psi(\tau)\rangle_{\mathbb B}=\langle u, \tau\rangle=0
\]
for all $u\in U$. Since $U$ is a Dirac structure, we find that $\psi$
restricts to a map $U^\circ\to U$. Conversely, we find easily that
$\psi(\tau)\in U$ if and only if $\tau\in U^\circ$.  Next choose
$\tau_1\in U^\circ $ and $\tau_2\in Q^*$.  Then since $\psi(\tau_1)\in U$,
\[\langle
\psi(\tau_1), \tau_2\rangle=\langle
\psi(\tau_1),\psi(\tau_2)\rangle_{\mathbb B}= \langle
\tau_1,\tau_2\rangle_{Q^*}=\langle \partial_Q\tau_1, \tau_2\rangle,
\] 
which shows that $\psi\an{U^\circ}=\partial_Q\an{U^\circ}$. In
particular, $\partial_Q$ sends $U^\circ$ to $U$, and $U^\circ$ is
isotropic in $Q^*$.  Consider the vector bundle map $U\oplus Q^*\to
\mathbb B$, $(u,\tau)\mapsto u+\psi(\tau)$. By assumption, this map is
surjective. Its kernel is the set of pairs $(u,\tau)$ with
$u=-\psi(\tau)$, i.e.~the graph of $-\partial_Q\an{U^\circ}\colon
U^\circ\to U$. It follows that
\begin{equation}\label{id_C}
\mathbb B\simeq \frac{U\oplus Q^*}{\operatorname{graph}(-\partial_Q\an{U^\circ}\colon
U^\circ\to U)}.
\end{equation}
Hence, we can use the notation $u\oplus \tau$ for
$\overline{u+\psi(\tau)}\in \mathbb B$.

In the case of an LA-Courant algebroid $(TA\oplus_AT^*A,TM\oplus
A^*,A,M)$ as in \S\ref{PontLA_LACourant}, for a Lie algebroid $A$, we
could show in \cite{Jotz19} that Manin pairs as in Corollary
\ref{disapointing_cor} are \emph{in bijection} with LA-Dirac
structures on $A$. That is, given a Manin pair $(\mathbb B,U)$ with an
inclusion $U\hookrightarrow TM\oplus A^*$ and a degenerate Courant
algebroid morphism $A\oplus T^*M\to \mathbb B$ satisfying (1)--(4), then via
\eqref{id_C}, there exists a Lagrangian splitting
of $TA\oplus_AT^*A$ such that the Courant bracket on $\mathbb B$ is given
by \eqref{C_bracket}.

 \appendix
\section{Proof of Theorem \ref{thm_CA_sd}}\label{proof_of_manin_ap}
Note that in the following computations, we will make use of the
identity $\partial_Q=\partial_Q^*$ without always mentioning it.  We
begin by proving the following two lemmas.
\begin{lemma}\label{lemma_looks_like_basic}
  Consider an LA-Courant algebroid $(\mathbb E, Q, B,M)$.  The bracket
  $\lb\cdot\,,\cdot\rb_{Q^*}$ on sections of the core $Q^*$ satisfies
  the following equation:
\begin{equation}\label{bracket_on_Q*_basic}
\begin{split}
  R(\partial_B\tau_1,\partial_B\tau_2)q
  =&-\Delta_q\lb\tau_1,\tau_2\rb_{Q^*}+\lb \Delta_q\tau_1,\tau_2\rb_{Q^*}+\lb\tau_1,\Delta_q\tau_2\rb_{Q^*}\\
  &+\Delta_{\nabla_{\partial_B\tau_2}q}\tau_1-\Delta_{\nabla_{\partial_B\tau_1}q}\tau_2
-\rho_Q^*\dr\langle\tau_1,\nabla_{\partial_B\tau_2}q\rangle
\end{split}
\end{equation}
for all $q\in\Gamma(Q)$ and $\tau_1,\tau_2\in\Gamma(Q^*)$.
\end{lemma}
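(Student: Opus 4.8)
The plan is to prove \eqref{bracket_on_Q*_basic} by a direct computation, expanding both sides with the explicit formula $\lb\tau_1,\tau_2\rb_{Q^*}=\Delta_{\partial_Q\tau_1}\tau_2-\nabla^*_{\partial_B\tau_2}\tau_1$ from Theorem \ref{core_courant} and then reassembling the result into curvature operators. Conceptually, \eqref{bracket_on_Q*_basic} is, for the degenerate Courant algebroid $Q^*$ and the $Q$-connection $\Delta$ on $Q^*$, the exact analogue of the formula \eqref{def_R_nabla_bas} defining the basic curvature of a metric connection on a Courant algebroid: the left-hand side $R(\partial_B\tau_1,\partial_B\tau_2)q$ plays the role of the obstruction measuring the failure of $\Delta_q$ to be a derivation of $\lb\cdot\,,\cdot\rb_{Q^*}$, corrected by the terms $\Delta_{\nabla_{\partial_B\tau_i}q}$ and the Leibniz term $-\rho_Q^*\dr\langle\tau_1,\nabla_{\partial_B\tau_2}q\rangle$. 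So I expect no genuinely new idea beyond what already appears in the proof of Theorem \ref{core_courant}.

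Concretely, I would first substitute the formula for $\lb\cdot\,,\cdot\rb_{Q^*}$ into the three bracket terms on the right-hand side of \eqref{bracket_on_Q*_basic}, obtaining a sum of terms of the shape $\Delta_{\partial_Q(\cdots)}(\cdots)$ and $\nabla^*_{\partial_B(\cdots)}(\cdots)$. Their subscripts I would simplify using (M1) for $\partial_Q\Delta_q\tau$, the identity \eqref{D1} for $\partial_B\Delta_q\tau=\nabla_q\partial_B\tau$, and $\nabla\circ\partial_Q=\partial_Q\circ\nabla^*$ from the self-dual $2$-representation. The second-order terms then group into curvatures: $\Delta_{\partial_Q\tau_1}\Delta_{\partial_Q\tau_2}-\Delta_{\partial_Q\tau_2}\Delta_{\partial_Q\tau_1}-\Delta_{\lb\partial_Q\tau_1,\partial_Q\tau_2\rb}$ is $R_\Delta(\partial_Q\tau_1,\partial_Q\tau_2)$, which by \eqref{omega_dorfman_curv} contributes an $\omega$-term, while the analogous combination of $\nabla^*$'s together with the term carrying $[\partial_B\tau_1,\partial_B\tau_2]$ is a curvature of $\nabla^*$, which the defining relations of the self-dual $2$-representation express through $R$. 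Independently, I would apply the reformulation \eqref{(LC10)} of (M4), with $b=\partial_B\tau_1$ and $\tau=\tau_2$, to rewrite the left-hand side $R(\partial_B\tau_1,\partial_B\tau_2)q$ as a combination of $\Delta_q\nabla^*_{\partial_B\tau_1}\tau_2$, $\nabla^*_{\partial_B\tau_1}\Delta_q\tau_2$, $\Delta_{\nabla_{\partial_B\tau_1}q}\tau_2$, $\nabla^*_{\nabla_q\partial_B\tau_1}\tau_2$ and an $\omega$-term. Matching the two sides, and invoking (M2) to trade the Lie bracket $[\partial_B\tau_1,\partial_B\tau_2]$ for $\partial_B\nabla^*_{\partial_B\tau_2}\tau_1-\nabla_{\partial_Q\tau_1}\partial_B\tau_2$, reduces the identity to a collection of first-order terms; these are dealt with exactly as in the closing argument of the proof of Theorem \ref{core_courant}, where $\rho_Q\circ\partial_Q=\rho_B\circ\partial_B$ and $\rho_Q\circ\partial_B^*=0$ from \eqref{rho_delta} are used to kill the residual $\rho_Q^*\dr$-corrections, leaving exactly $-\rho_Q^*\dr\langle\tau_1,\nabla_{\partial_B\tau_2}q\rangle$.

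The main obstacle is organisational rather than conceptual: after the substitutions each side is a sum of more than a dozen terms, many of them double compositions of $\Delta$ and $\nabla^*$ with intertwined arguments, and the identity only emerges once they are correctly paired up. The two points demanding most care are (i) the bookkeeping of signs and of the $\rho_Q^*\dr$-corrections produced by the Leibniz rule $\Delta_{fq}\tau=f\Delta_q\tau+\langle q,\tau\rangle\rho_Q^*\dr f$ — in particular the terms $\Delta_{\partial_B^*\langle\tau_i,\nabla_\cdot q\rangle}\tau_j$ created by (M1), which are handled just as in the proof of Theorem \ref{core_courant} — and (ii) applying (M2) at the right stage, so that the Lie-bracket term in the curvature of $\nabla^*$ turns into $\partial_B$ applied back to an expression built from $\lb\cdot\,,\cdot\rb_{Q^*}$, which is what finally closes the computation. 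Once these cancellations are carried through, \eqref{bracket_on_Q*_basic} follows.
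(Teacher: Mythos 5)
Your overall strategy --- substitute $\lb\tau_1,\tau_2\rb_{Q^*}=\Delta_{\partial_Q\tau_1}\tau_2-\nabla^*_{\partial_B\tau_2}\tau_1$, simplify subscripts with (M1) and \eqref{D1}, convert second-order combinations via \eqref{omega_dorfman_curv} and \eqref{(LC10)}, and kill residual first-order terms using \eqref{dualdd} and $\rho_Q\circ\partial_B^*=0$ --- is exactly the paper's. But the concrete middle steps you describe refer to terms that do not occur, and as written they would fail. After substitution, every outer operator on the right-hand side of \eqref{bracket_on_Q*_basic} is $\Delta_q$ (the brackets only contribute single $\Delta$'s and $\nabla^*$'s with composite subscripts), so the only Dorfman-curvature combination that can arise is $\Delta_q\Delta_{\partial_Q\tau_1}\tau_2-\Delta_{\partial_Q\tau_1}\Delta_q\tau_2$, which together with the term $\Delta_{\lb q,\partial_Q\tau_1\rb}\tau_2$ supplied by (M1) gives $R_\Delta(q,\partial_Q\tau_1)\tau_2$. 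The combination $\Delta_{\partial_Q\tau_1}\Delta_{\partial_Q\tau_2}-\Delta_{\partial_Q\tau_2}\Delta_{\partial_Q\tau_1}-\Delta_{\lb\partial_Q\tau_1,\partial_Q\tau_2\rb}$ you propose to form never appears (it belongs to the Jacobi-identity computation (CA1) in the proof of Theorem \ref{core_courant}, not to this lemma); likewise no double $\nabla^*$'s and no term carrying $[\partial_B\tau_1,\partial_B\tau_2]$ arise, so there is no ``curvature of $\nabla^*$'' to assemble and (M2) has nothing to act on --- the paper's proof of this lemma does not use (M2) at all.

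The mixed terms that actually remain, $\Delta_q\nabla^*_{\partial_B\tau_2}\tau_1-\nabla^*_{\partial_B\tau_2}\Delta_q\tau_1-\nabla^*_{\nabla_q\partial_B\tau_2}\tau_1+\Delta_{\nabla_{\partial_B\tau_2}q}\tau_1$ (the third coming from \eqref{D1}), are precisely the right-hand side of \eqref{(LC10)} with $b=\partial_B\tau_2$ and $\tau=\tau_1$; that is the instance needed. Your plan instead applies \eqref{(LC10)} with $b=\partial_B\tau_1$, $\tau=\tau_2$ to the left-hand side, which introduces $\Delta_q\nabla^*_{\partial_B\tau_1}\tau_2$, $\nabla^*_{\partial_B\tau_1}\Delta_q\tau_2$ and $\nabla^*_{\nabla_q\partial_B\tau_1}\tau_2$ --- terms that never occur in the expansion of the right-hand side --- so the advertised termwise matching of the two sides cannot be carried out without further, unexplained identities. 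With the corrected groupings the argument does close and coincides with the paper's: $R_\Delta(q,\partial_Q\tau_1)\tau_2=\langle\ip{\partial_Q\tau_1}\ip{q}\omega,\partial_B\tau_2\rangle$ cancels the $\omega$-term from \eqref{(LC10)}, and the leftover $\Delta_{\partial_B^*\langle\tau_1,\nabla_\cdot q\rangle}\tau_2+\langle\nabla_{\nabla_\cdot\partial_B\tau_2}q,\tau_1\rangle-\rho_Q^*\dr\langle\tau_1,\nabla_{\partial_B\tau_2}q\rangle$ is shown to vanish by pairing with an arbitrary $q'\in\Gamma(Q)$, using \eqref{dualdd}, \eqref{D1} and \eqref{rho_delta}. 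As stated, however, the core of your computation rests on cancellations among terms that are not there, so the proposal has a genuine gap.
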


\begin{proof}
The proof is just a computation using (M1) and \eqref{(LC10)}.
We have 
\begin{equation*}
\begin{split}
  &\Delta_q\lb\tau_1,\tau_2\rb_{Q^*}-\lb
  \Delta_q\tau_1,\tau_2\rb_{Q^*}-\lb\tau_1,\Delta_q\tau_2\rb_{Q^*}
+\Delta_{\nabla_{\partial_B\tau_1}q}\tau_2\\
&-\Delta_{\nabla_{\partial_B\tau_2}q}\tau_1
+\rho_Q^*\dr\langle\tau_1,\nabla_{\partial_B\tau_2}q\rangle\\
  =&\Delta_q\Delta_{\partial_Q\tau_1}\tau_2-\Delta_q\nabla^*_{\partial_B\tau_2}\tau_1
-\Delta_{\partial_Q(\Delta_q\tau_1)}\tau_2+\nabla^*_{\partial_B\tau_2}\Delta_q\tau_1-\Delta_{\partial_Q\tau_1}\Delta_q\tau_2\\
  & +\nabla^*_{\partial_B(\Delta_q\tau_2)}\tau_1
  +\Delta_{\nabla_{\partial_B\tau_1}q}\tau_2-\Delta_{\nabla_{\partial_B\tau_2}q}\tau_1
+\rho_Q^*\dr\langle\tau_1,\nabla_{\partial_B\tau_2}q\rangle
\end{split}
\end{equation*}
Replacing $\Delta_q\Delta_{\partial_Q\tau_1}\tau_2-\Delta_{\partial_Q\tau_1}\Delta_q\tau_2$ 
by $R_\Delta(q,\partial_Q\tau_1)\tau_2+\Delta_{\lb q,\partial_Q\tau_1\rb}\tau_2$
and reordering the terms yields
\begin{equation*}
\begin{split}
  &R_\Delta(q,\partial_Q\tau_1)\tau_2+\Delta_{\lb
    q,\partial_Q\tau_1\rb-\partial_Q(\Delta_q\tau_1)+\nabla_{\partial_B\tau_1}q}\tau_2
  -\Delta_q\nabla^*_{\partial_B\tau_2}\tau_1+\nabla^*_{\partial_B\tau_2}\Delta_q\tau_1\\
  & +\nabla^*_{\partial_B(\Delta_q\tau_2)}\tau_1-\Delta_{\nabla_{\partial_B\tau_2}q}\tau_1
+\rho_Q^*\dr\langle\tau_1,\nabla_{\partial_B\tau_2}q\rangle.
\end{split}
\end{equation*}
Since
$R_\Delta(q,\partial_Q\tau_1)\tau_2=\langle \ip{\partial_Q\tau_1}\ip{q}\omega,\partial_B\tau_2\rangle$
by \eqref{omega_dorfman_curv}, we can now use \eqref{(LC10)} and
$\nabla_q\circ\partial_B=\partial_B\circ\Delta_q$ to replace \[
R_\Delta(q,\partial_Q\tau_1)\tau_2-\Delta_q\nabla^*_{\partial_B\tau_2}\tau_1
+\nabla^*_{\partial_B\tau_2}\Delta_q\tau_1-\Delta_{\nabla_{\partial_B\tau_2}q}\tau_1+\nabla^*_{\nabla_q\partial_B\tau_2}\tau_1\]
by
$-\langle\nabla_{\nabla_\cdot\partial_B\tau_2 }q, \tau_1\rangle
+R(\partial_B\tau_2,\partial_B\tau_1)q$.
We use (M1) to replace $\Delta_{\lb
  q, \partial_Q\tau_1\rb-\partial_Q(\Delta_q\tau_1)+\nabla_{\partial_B\tau_1}q}\tau_2$
by $-\Delta_{\partial_B^*\langle \tau_1, \nabla_\cdot
  q\rangle}\tau_2$.  These two steps yield that the right hand side
of our equation is
\begin{equation*}
\begin{split}
  &-\langle\nabla_{\nabla_\cdot\partial_B\tau_2 }q, \tau_1\rangle
+R(\partial_B\tau_2,\partial_B\tau_1)q-\Delta_{\partial_B^*\langle \tau_1, \nabla_\cdot
    q\rangle}\tau_2
  +\rho_Q^*\dr\langle\tau_1,\nabla_{\partial_B\tau_2}q\rangle.
\end{split}
\end{equation*}
To conclude, let us show  that 
\[-\langle\nabla_{\nabla_\cdot\partial_B\tau_2 }q, \tau_1\rangle
-\Delta_{\partial_B^*\langle
  \tau_1, \nabla_\cdot q\rangle}\tau_2
+\rho_Q^*\dr\langle\tau_1,\nabla_{\partial_B\tau_2}q\rangle\in\Gamma(Q^*)
\]
vanishes. On $q'\in\Gamma(Q)$, this is 
\begin{equation*}
\begin{split}
  &-\langle \nabla_{\nabla_{q'}(\partial_B\tau_2)}q,
  \tau_1\rangle+\langle \lb \partial_B^*\langle\nabla_\cdot q,
  \tau_1\rangle, q'\rb,\tau_2\rangle
  +\rho_Q(q')\langle\tau_1, \nabla_{\partial_B\tau_2}q\rangle\\
  =&-\langle \nabla_{\nabla_{q'}(\partial_B\tau_2)}q,
  \tau_1\rangle
  +\langle\Delta_{q'}\tau_2, \partial_B^*(\langle\nabla_\cdot q, \tau_1\rangle)\rangle\\
  =&-\langle \nabla_{\nabla_{q'}(\partial_B\tau_2)}q,
  \tau_1\rangle +\langle\nabla_{\partial_B(\Delta_{q'}\tau_2)} q,
  \tau_1\rangle=0.
\end{split}
\end{equation*}
We have used \eqref{rho_delta}  and
\eqref{dualdd} in the first
line, as well as for the first equality. To conclude, we have used
$\partial_B\circ\Delta_{q'}=\nabla_{q'}\circ\partial_B$ by \eqref{D1}.
\end{proof}

\begin{lemma}\label{bracket_and_pullback}
  The bracket on $Q^*$ satisfies
\begin{equation}\label{bracket_pullback}
\lb \rho_Q^*\dr f,\tau\rb_{Q^*}=0
\end{equation}
for all $f\in C^\infty(M)$ and $\tau\in\Gamma(Q^*)$.
\end{lemma}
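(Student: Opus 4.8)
The plan is to unfold the definition of $\lb\cdot\,,\cdot\rb_{Q^*}$ from Theorem \ref{core_courant} and check that the two summands cancel when the first argument is $\rho_Q^*\dr f$. Writing $q_f:=\partial_Q(\rho_Q^*\dr f)\in\Gamma(Q)$, we have
\[
\lb \rho_Q^*\dr f,\tau\rb_{Q^*}=\Delta_{q_f}\tau-\nabla^*_{\partial_B\tau}(\rho_Q^*\dr f),
\]
so it suffices to prove $\Delta_{q_f}\tau=\nabla^*_{\partial_B\tau}(\rho_Q^*\dr f)$ in $\Gamma(Q^*)$, which I would do by pairing both sides with an arbitrary $q'\in\Gamma(Q)$.

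First I would record two preliminary facts. Dualising \eqref{rho_delta} (using $\partial_Q=\partial_Q^*$ and the canonical identification $B^{**}\cong B$) gives $\partial_B\circ\rho_Q^*=0$, hence $\partial_B(\rho_Q^*\dr f)=0$; combined with $\rho_Q\circ\partial_Q=\rho_B\circ\partial_B$ from Remark \ref{remark_simplifications}, this yields $\rho_Q(q_f)=0$. Moreover, \eqref{dorfman_on_exact} gives $\Delta_{q'}(\rho_Q^*\dr f)=\rho_Q^*\dr(\rho_Q(q')(f))$ for every $q'\in\Gamma(Q)$.

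For the right-hand side, since $\nabla^*$ is dual to $\nabla\colon\Gamma(B)\times\Gamma(Q)\to\Gamma(Q)$ and $\langle q',\rho_Q^*\dr f\rangle=\rho_Q(q')(f)$, together with $\rho_B(\partial_B\tau)=\rho_Q(\partial_Q\tau)$, one obtains
\[
\langle\nabla^*_{\partial_B\tau}(\rho_Q^*\dr f),q'\rangle=\rho_Q(\partial_Q\tau)\bigl(\rho_Q(q')(f)\bigr)-\rho_Q(\nabla_{\partial_B\tau}q')(f).
\]
For the left-hand side, \eqref{dualdd} together with $\rho_Q(q_f)=0$ gives $\langle\Delta_{q_f}\tau,q'\rangle=-\langle\lb q_f,q'\rb,\tau\rangle$, so the task reduces to computing the dull bracket $\lb q_f,q'\rb$. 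Applying (M1) with second argument $\rho_Q^*\dr f$ and using $\partial_B(\rho_Q^*\dr f)=0$ and the identity $\Delta_{q'}(\rho_Q^*\dr f)=\rho_Q^*\dr(\rho_Q(q')(f))$ from the previous paragraph, one finds $\lb q',q_f\rb=\partial_Q\bigl(\rho_Q^*\dr(\rho_Q(q')(f))\bigr)-\partial_B^*\langle\rho_Q^*\dr f,\nabla_\cdot q'\rangle$. Pairing the skew-symmetric counterpart $\lb q_f,q'\rb=-\lb q',q_f\rb$ with $\tau$, using $\partial_Q=\partial_Q^*$ once more and the adjunction $\langle\partial_B^*\beta,\tau\rangle=\langle\beta,\partial_B\tau\rangle$, produces exactly the right-hand side displayed above. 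Hence the two pairings with $q'$ agree for all $q'\in\Gamma(Q)$, giving $\Delta_{q_f}\tau=\nabla^*_{\partial_B\tau}(\rho_Q^*\dr f)$ and therefore \eqref{bracket_pullback}.

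All the manipulations are routine linear algebra once the bookkeeping of the various duals ($\partial_Q=\partial_Q^*$, $\partial_B^*\colon B^*\to Q$, $\rho_Q^*$, and the dual connection $\nabla^*$) is set up carefully; the only genuine input is the substitution into (M1) used to rewrite $\lb q_f,q'\rb$, so the main (minor) obstacle is carrying out that substitution and checking that the $\partial_B^*$-term matches. Note that none of \eqref{mixed_anchors}, (M2)--(M5) are needed.
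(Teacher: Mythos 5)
Your proof is correct. The paper's own argument is shorter: it applies the symmetrized identity \eqref{almost_C} with $\tau_1=\rho_Q^*\dr f$, $\tau_2=\tau$ to rewrite $\lb\rho_Q^*\dr f,\tau\rb_{Q^*}$ as $\nabla^*_{\partial_B(\rho_Q^*\dr f)}\tau-\Delta_{\partial_Q\tau}(\rho_Q^*\dr f)+\rho_Q^*\dr(\rho_Q(\partial_Q\tau)(f))$, and then concludes in one line from $\partial_B\circ\rho_Q^*=0$ (the dual of \eqref{rho_delta}) and \eqref{dorfman_on_exact}, never leaving $\Gamma(Q^*)$. You instead keep the defining expression $\Delta_{q_f}\tau-\nabla^*_{\partial_B\tau}(\rho_Q^*\dr f)$ and check its vanishing against an arbitrary test section $q'$, which forces you to compute the dull bracket $\lb q',q_f\rb$ via (M1) applied to the pair $(q',\rho_Q^*\dr f)$, together with \eqref{dualdd} and the observation $\rho_Q(q_f)=0$; I verified the substitution and the bookkeeping of $\partial_Q=\partial_Q^*$ and $\langle\partial_B^*\beta,\tau\rangle=\langle\beta,\partial_B\tau\rangle$, and the two pairings do agree. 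Since (M1) and \eqref{almost_C} are equivalent once $\rho_Q\circ\partial_Q=\rho_B\circ\partial_B$ holds (Remark \ref{remark_simplifications}), both proofs rest on the same inputs -- the first matched-pair equation plus \eqref{rho_delta} and \eqref{dorfman_on_exact} -- so yours is a valid but longer execution; the paper's use of the symmetrized form avoids the test-pairing and the computation of $\lb q',q_f\rb$ altogether. One cosmetic point: the identity $\partial_B\circ\rho_Q^*=0$ needs only \eqref{rho_delta} and the identification $B^{**}\cong B$, not $\partial_Q=\partial_Q^*$ as your parenthetical suggests.
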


\begin{proof}
By \eqref{almost_C}, we have $\lb \rho_Q^*\dr
f,\tau\rb_{Q^*}=\nabla_{\partial_B\rho_Q^*\dr
  f}\tau-\Delta_{\partial_Q\tau}(\rho_Q^*\dr
f)+\rho_Q^*\dr((\rho_Q\partial_Q\tau)f)$. But
$\partial_B\rho_Q^*=0$ by \eqref{rho_delta} and
$\Delta_{\partial_Q\tau}(\rho_Q^*\dr
f)=\rho_Q^*\dr(\rho_Q(\partial_Q\tau)(f))$ by \eqref{dorfman_on_exact}.
\end{proof}

\medskip

Now we check that the bracket $\lb\cdot\,,\cdot\rb_{\mathbb B}$ in
Theorem \ref{thm_CA_sd} is well-defined. We have for all
$\upsilon\in\Gamma(U^\circ)$, $\tau\in\Gamma(Q^*)$ and $u\in\Gamma(U)$:
\begin{equation*}
\begin{split}
  \lb u\oplus \tau, (-\partial_Q\upsilon)\oplus\upsilon\rb&=
  \left(-\lb u,\partial_Q\upsilon\rb_U+\nabla_{\partial_B\tau}(-\partial_Q\upsilon)-\nabla_{\partial_B\upsilon}u\right)\\
  &\quad \oplus
  \left(\lb\tau,\upsilon\rb_{Q^*}+\Delta_u\upsilon-\Delta_{-\partial_Q\upsilon}\tau+\rho_Q^*\dr\langle\tau,-\partial_Q\upsilon\rangle\right).
\end{split}
\end{equation*}
By (M1), the properties of $2$-representations and \eqref{almost_C}, this is 
\begin{equation*}
\begin{split}
  &\left(-\partial_Q(\Delta_u\upsilon)+\partial_B^*\langle\upsilon,\nabla_\cdot
    u\rangle
    +\cancel{\nabla_{\partial_B\upsilon}u}-\partial_Q\nabla_{\partial_B\tau}^*\upsilon-\cancel{\nabla_{\partial_B\upsilon}u}\right)\\
  &\quad \oplus
  \left(-\cancel{\Delta_{\partial_Q\upsilon}\tau}+\nabla_{\partial_B\tau}\upsilon
    +\cancel{\rho_Q^*\dr\langle\tau,\partial_Q\upsilon\rangle}
    +\Delta_u\upsilon+\cancel{\Delta_{\partial_Q\upsilon}\tau}-\cancel{\rho_Q^*\dr\langle\tau,\partial_Q\upsilon\rangle}\right)\\
  =&\left(-\partial_Q(\Delta_u\upsilon)+\partial_B^*\langle\upsilon,\nabla_\cdot
    u\rangle-\partial_Q\nabla_{\partial_B\tau}^*\upsilon\right) \oplus
  \left(\nabla^*_{\partial_B\tau}\upsilon+\Delta_u\upsilon\right).
\end{split}
\end{equation*}
Since $\upsilon\in\Gamma(U^\circ)$ and $\nabla_b$ preserves $\Gamma(U)$
for all $b\in\Gamma(B)$, the section $\langle\upsilon,\nabla_\cdot
u\rangle$ of $B^*$ vanishes and we get
\begin{equation*}
\begin{split}
\lb u\oplus \tau, (-\partial_Q\upsilon)\oplus\upsilon\rb
=&\left(-\partial_Q(\Delta_u\upsilon+\nabla^*_{\partial_B\tau}\upsilon)\right) \oplus \left(\nabla^*_{\partial_B\tau}\upsilon+\Delta_u\upsilon\right).
\end{split}
\end{equation*}
Because $\Delta_u$ preserves as well $\Gamma(U^\circ)$, the sum
$\nabla_{\partial_B\tau}^*\upsilon+\Delta_u\upsilon$ is a section of
$U^\circ$, and so $\lb u\oplus \tau, (-\partial_Q\upsilon)\oplus\upsilon\rb$
is zero in $\mathbb B$.  \medskip 

We now check the Courant algebroid
axioms (CA1), (CA2) and (CA3).  The last one, (CA3), is immediate:
\begin{equation*}
\begin{split}
  &\lb u_1\oplus\tau_1, u_2\oplus\tau_2\rb_{\mathbb{B}}+\lb
  u_2\oplus\tau_2, u_1\oplus\tau_1\rb_{\mathbb{B}}\\
  &=0\oplus
  \left(\rho_Q^*\dr\langle\tau_1,\partial_Q\tau_2\rangle+\rho_Q^*\dr\langle\tau_1,u_2\rangle
    +\rho_Q^*\dr\langle\tau_2,u_1\rangle\right)
  =0\oplus\rho_Q^*\dr\langle u_1\oplus\tau_1, u_2\oplus\tau_2\rangle_{\mathbb B}\\
  &=\mathcal D_{\mathbb B}\langle u_1\oplus\tau_1,
  u_2\oplus\tau_2\rangle_{\mathbb B}.
\end{split}
\end{equation*}
Next we prove (CA2). We have, using \eqref{almost_C} to replace $\lb \tau_1,
\tau_2\rb_{Q^*}$ by
$-\Delta_{\partial_Q\tau_2}\tau_1+\nabla_{\partial_B\tau_1}\tau_2+\rho_Q^*\dr\langle\tau_1,\partial_Q\tau_2\rangle$:
\begin{equation*}\label{metric}
\begin{split}
  &\langle \lb u_1\oplus\tau_1, u_2\oplus\tau_2\rb_{\mathbb B}, u_3\oplus\tau_3\rangle_{\mathbb B}\\
  &=\langle \lb u_1,
  u_2\rb_U+\nabla_{\partial_B\tau_1}u_2-\nabla_{\partial_B\tau_2}u_1, \tau_3\rangle\\
  &+\langle
  -\Delta_{\partial_Q\tau_2}\tau_1+\nabla^*_{\partial_B\tau_1}\tau_2+\rho_Q^*\dr\langle\tau_1,\partial_Q\tau_2\rangle+\Delta_{u_1}\tau_2-\Delta_{u_2}\tau_1+\rho_Q^*\dr\langle
  \tau_1, u_2\rangle, u_3+\partial_Q\tau_3\rangle.
\end{split}
\end{equation*}
We sum $\langle \lb u_1\oplus\tau_1, u_2\oplus\tau_2\rb_{\mathbb B},
u_3\oplus\tau_3\rangle_{\mathbb B}$ with $\langle u_2\oplus\tau_2,
\lb u_1\oplus\tau_1, u_3\oplus\tau_3\rb_{\mathbb B} \rangle_{\mathbb B}$, and 
replace only in the first summand the term
$\langle\Delta_{u_1}\tau_2,\partial_Q\tau_3\rangle$ by
$\rho_Q(u_1)\langle\tau_2,\partial_Q\tau_3\rangle-\langle\tau_2,\lb
u_1,\partial_Q\tau_3\rb\rangle$.  This yields
\begin{equation*}
\begin{split}
  &\langle \lb u_1\oplus\tau_1, u_2\oplus\tau_2\rb_{\mathbb B}, u_3\oplus\tau_3\rangle_{\mathbb B}+\langle u_2\oplus\tau_2, \lb u_1\oplus\tau_1, u_3\oplus\tau_3\rb_{\mathbb B} \rangle_{\mathbb B}\\
  &=\rho_Q(u_1)\langle u_2,\tau_3\rangle-\cancel{\langle u_2, \Delta_{u_1}\tau_3\rangle}+\langle\nabla_{\partial_B\tau_1}u_2-\nabla_{\partial_B\tau_2}u_1,\tau_3\rangle\\
  &-\langle\Delta_{\partial_Q\tau_2}\tau_1,u_3\rangle+\langle\nabla^*_{\partial_B\tau_1}\tau_2,
  u_3\rangle+\rho_Q(u_3)\langle\tau_1,\partial_Q\tau_2\rangle
  +\cancel{\langle\Delta_{u_1}\tau_2, u_3\rangle}-\langle\Delta_{u_2}\tau_1, u_3\rangle\\
  &+\rho_Q(u_3)\langle\tau_1,u_2\rangle-\langle\Delta_{\partial_Q\tau_2}\tau_1,\partial_Q\tau_3\rangle+\langle\nabla^*_{\partial_B\tau_1}\tau_2,\partial_Q\tau_3\rangle+\rho_Q(\partial_Q\tau_3)\langle\tau_1, \partial_Q\tau_2\rangle\\
  & +\rho_Q(u_1)\langle\tau_2,\partial_Q\tau_3\rangle-\langle\tau_2,\lb u_1,\partial_Q\tau_3\rb\rangle-\langle\Delta_{u_2}\tau_1,\partial_Q\tau_3\rangle+\rho_Q(\partial_Q\tau_3)\langle\tau_1,u_2\rangle\\
  &+\rho_Q(u_1)\langle u_3,\tau_2\rangle-\cancel{\langle u_3, \Delta_{u_1}\tau_2\rangle}+\langle\nabla_{\partial_B\tau_1}u_3-\nabla_{\partial_B\tau_3}u_1,\tau_2\rangle\\
  &-\langle\Delta_{\partial_Q\tau_3}\tau_1,u_2\rangle+\langle\nabla^*_{\partial_B\tau_1}\tau_3,
  u_2\rangle+\rho_Q(u_2)\langle\tau_1,\partial_Q\tau_3\rangle
  +\cancel{\langle\Delta_{u_1}\tau_3, u_2\rangle}-\langle\Delta_{u_3}\tau_1, u_2\rangle\\
  &+\rho_Q(u_2)\langle\tau_1,u_3\rangle-\langle\Delta_{\partial_Q\tau_3}\tau_1,\partial_Q\tau_2\rangle+\langle\nabla^*_{\partial_B\tau_1}\tau_3,\partial_Q\tau_2\rangle+\rho_Q(\partial_Q\tau_2)\langle\tau_1, \partial_Q\tau_3\rangle\\
  &+\langle\Delta_{u_1}\tau_3,\partial_Q\tau_2\rangle-\langle\Delta_{u_3}\tau_1,\partial_Q\tau_2\rangle+\rho_Q(\partial_Q\tau_2)\langle\tau_1,u_3\rangle.
\end{split}
\end{equation*}
We reorder the remaining
terms and replace eight times sums like
$\rho_Q(\partial_Q\tau_2)\langle\tau_1,u_3\rangle-\langle\Delta_{\partial_Q\tau_2}\tau_1,
u_3\rangle$ by $\langle\lb \partial_Q\tau_2, u_3\rb,
\tau_1\rangle$ \eqref{dualdd}, and three times sums like
$\langle\nabla_{\partial_B\tau_1}u_2,\tau_3\rangle+\langle
u_2,\nabla^*_{\partial_B\tau_1}\tau_3\rangle$ by
$\rho_B(\partial_B\tau_1)\langle
u_2,\tau_3\rangle$.  This leads to
\begin{equation*}
\begin{split}
  &\langle \lb u_1\oplus\tau_1, u_2\oplus\tau_2\rb, u_3\oplus\tau_3\rangle_{\mathbb B}+\langle u_2\oplus\tau_2, \lb u_1\oplus\tau_1, u_3\oplus\tau_3\rb \rangle_{\mathbb B}\\
  &=\rho_Q(u_1)\langle u_2\oplus\tau_2,u_3\oplus\tau_3\rangle_{\mathbb B}\\
&+\cancel{\langle \lb \partial_Q\tau_2, \partial_Q\tau_3\rb, \tau_1\rangle}
+\cancel{\langle \lb \partial_Q\tau_2, u_3\rb, \tau_1\rangle}
+\cancel{\langle \lb u_3, \partial_Q\tau_2\rb, \tau_1\rangle}
+\cancel{\langle \lb u_2, u_3\rb, \tau_1\rangle}\\
&+\cancel{\langle\lb u_3,u_2\rb, \tau_1\rangle}
+\cancel{\langle\lb \partial_Q\tau_3, u_2\rb, \tau_1\rangle}
+\cancel{\langle\lb u_2, \partial_Q\tau_3\rb,\tau_1\rangle}
+\cancel{\langle\lb\partial_Q\tau_3, \partial_Q\tau_2\rb,\tau_1\rangle}\\
&+\rho_B(\partial_B\tau_1)\langle u_2,\tau_3\rangle
+\rho_B(\partial_B\tau_1)\langle\tau_3,\partial_Q\tau_2\rangle
+\rho_B(\partial_B\tau_1)\langle u_3,\tau_2\rangle\\
&-\langle\nabla_{\partial_B\tau_2}u_1,\tau_3\rangle
  -\langle\tau_2,\lb u_1,\partial_Q\tau_3\rb\rangle-\langle\nabla_{\partial_B\tau_3}u_1,\tau_2\rangle
  +\langle\Delta_{u_1}\tau_3,\partial_Q\tau_2\rangle.
\end{split}
\end{equation*}
The four last terms cancel each other by (M1) and $\partial_Q=\partial_Q^*$.  This
yields
\begin{equation*}
\begin{split}
  &\langle \lb u_1\oplus\tau_1, u_2\oplus\tau_2\rb, u_3\oplus\tau_3\rangle_{\mathbb B}+\langle u_2\oplus\tau_2, \lb u_1\oplus\tau_1, u_3\oplus\tau_3\rb \rangle_{\mathbb B}\\
  &=(\rho_Q(u_1)+\rho_B\partial_B\tau_1)\langle
  u_2\oplus\tau_2,u_3\oplus\tau_3\rangle_{\mathbb B}=\rho_{\mathbb
    B}(u_1\oplus\tau_1)\langle
  u_2\oplus\tau_2,u_3\oplus\tau_3\rangle_{\mathbb B}.
\end{split}
\end{equation*}

\medskip Finally we check the Jacobi identity in Leibniz form
(CA1). We will check that
\[\operatorname{Jac}_{\lb\cdot\,,\cdot\rb}(u_1\oplus\tau_1,
u_2\oplus\tau_2, u_3\oplus\tau_3)=(-\partial_Q\upsilon)\oplus \upsilon
\]
with $\upsilon=(R(\partial_B\tau_1,\partial_B\tau_2)u_3-\langle\ip{u_2}\ip{u_1}\omega,\partial_B\tau_3\rangle)+\text{cyclic
  permutations}$.
Since by Proposition \ref{char_sub_alg} $\ip{u_2}\ip{u_1}\omega$ has image in
$U^\circ$ for all $u_1,u_2\in\Gamma(U)$ and $R(b_1,b_2)$ restricts to
a morphism $U\to U^\circ$ for all $b_1,b_2\in\Gamma(B)$ by Proposition
\ref{char_VB_dir}, $\upsilon$ is a section of $U^\circ$ and so
$\operatorname{Jac}_{\lb\cdot\,,\cdot\rb}(u_1\oplus\tau_1,
u_2\oplus\tau_2, u_3\oplus\tau_3)$ will be zero in $\mathbb B$.

Using \eqref{partial_B_morphism},
  \eqref{dorfman_on_exact}, \eqref{bracket_pullback}
and \eqref{rho_delta}, we find
\begin{equation*}
\begin{split}
  &\lb\lb u_1\oplus\tau_1, u_2\oplus\tau_2\rb, u_3\oplus\tau_3\rb\\
  =&\Bigl( \lb \lb u_1,
  u_2\rb_U, u_3\rb_U+\lb \nabla_{\partial_B\tau_1}u_2-\nabla_{\partial_B\tau_2}u_1, u_3\rb_U\\
  &\qquad+ \nabla_{[\partial_B
    \tau_1,\partial_B\tau_2]+\partial_B(\Delta_{u_1}\tau_2-\Delta_{u_2}\tau_1)}u_3 -\nabla_{\partial_B\tau_3}(\lb u_1,
  u_2\rb_U+\nabla_{\partial_B\tau_1}u_2-\nabla_{\partial_B\tau_2}u_1)
  \Bigr)\\
  &\oplus\Bigl(\lb\lb \tau_1,
  \tau_2\rb_{Q^*}, \tau_3\rb_{Q^*}+\lb\Delta_{u_1}\tau_2-\Delta_{u_2}\tau_1, \tau_3 \rb_{Q^*}\\
  &\qquad+\Delta_{\lb u_1,
    u_2\rb_U+\nabla_{\partial_B\tau_1}u_2-\nabla_{\partial_B\tau_2}u_1}\tau_3-\Delta_{u_3}(\lb
  \tau_1,
  \tau_2\rb_{Q^*}+\Delta_{u_1}\tau_2-\Delta_{u_2}\tau_1)\\
  &\qquad+\rho_Q^*\dr\left\langle\lb \tau_1,
  \tau_2\rb_{Q^*}+\Delta_{u_1}\tau_2-\Delta_{u_2}\tau_1, u_3 \right\rangle \Bigr).
\end{split}
\end{equation*}
In the same manner, we compute 
\begin{equation*}
\begin{split}
  &\lb u_1\oplus\tau_1,\lb u_2\oplus\tau_2,
  u_3\oplus\tau_3\rb\rb\\
  &=\bigl\lb u_1\oplus\tau_1, (\lb u_2,
  u_3\rb_U+\nabla_{\partial_B\tau_2}u_3-\nabla_{\partial_B\tau_3}u_2
  ) \oplus (\lb \tau_2,
  \tau_3\rb_{Q^*}+\Delta_{u_2}\tau_3-\Delta_{u_3}\tau_2+\rho_Q^*\dr\langle
  \tau_2, u_3\rangle
  )\bigr\rb\\
  &=\Bigl(\lb u_1, \lb u_2, u_3\rb_U\rb_U+\lb u_1,
  \nabla_{\partial_B\tau_2}u_3-\nabla_{\partial_B\tau_3}u_2\rb_U\\
&\qquad+\nabla_{\partial_B\tau_1}(\lb u_2,
  u_3\rb_U+\nabla_{\partial_B\tau_2}u_3-\nabla_{\partial_B\tau_3}u_2)-\nabla_{[ \partial_B\tau_2,
  \partial_B\tau_3]+\partial_B(\Delta_{u_2}\tau_3-\Delta_{u_3}\tau_2)
 }u_1\Bigr)\\
&\oplus\Bigl(\lb \tau_1, \lb \tau_2,
  \tau_3\rb_{Q^*}\rb_{Q^*}+\lb\tau_1, \Delta_{u_2}\tau_3-\Delta_{u_3}\tau_2\rb_{Q^*}+\rho_Q^*\dr(\rho_Q(\partial_Q\tau_1)\langle
  \tau_2, u_3\rangle)\\
&\qquad +\Delta_{u_1}(\lb \tau_2,
  \tau_3\rb_{Q^*}+\Delta_{u_2}\tau_3-\Delta_{u_3}\tau_2)+\rho_Q^*\dr(\rho_Q(u_1)\langle
  \tau_2, u_3\rangle)-\Delta_{ \lb u_2,
  u_3\rb_U+\nabla_{\partial_B\tau_2}u_3-\nabla_{\partial_B\tau_3}u_2
  }\tau_1\\
&\qquad +\rho_Q^*\dr \langle\tau_1, \lb u_2,
  u_3\rb_U+\nabla_{\partial_B\tau_2}u_3-\nabla_{\partial_B\tau_3}u_2
\rangle 
\Bigr)
\end{split}
\end{equation*}
Since $\operatorname{Jac}_{\lb\cdot\,,\cdot\rb_U}(u_1,u_2,u_3)=0$ and
$\partial_B\circ\Delta_q=\nabla_q\circ\partial_B$ for all $q\in\Gamma(Q)$, the
$U$-term of
$\operatorname{Jac}_{\lb\cdot\,,\cdot\rb}(u_1\oplus\tau_1,u_2\oplus\tau_2,u_3\oplus\tau_3)$
equals
\begin{equation*}
\begin{split}
&\lb \nabla_{\partial_B\tau_1}u_2-\nabla_{\partial_B\tau_2}u_1, u_3\rb_U\\
  &+ \nabla_{[\partial_B
    \tau_1,\partial_B\tau_2]+\nabla_{u_1}\partial_B\tau_2-\nabla_{u_2}\partial_B\tau_1}u_3 -\nabla_{\partial_B\tau_3}(\lb u_1,
  u_2\rb_U+\nabla_{\partial_B\tau_1}u_2-\nabla_{\partial_B\tau_2}u_1)\\
&+\lb u_2,
  \nabla_{\partial_B\tau_1}u_3-\nabla_{\partial_B\tau_3}u_1\rb_U\\
&+\nabla_{\partial_B\tau_2}(\lb u_1,
  u_3\rb_U+\nabla_{\partial_B\tau_1}u_3-\nabla_{\partial_B\tau_3}u_1)-\nabla_{[ \partial_B\tau_1,
  \partial_B\tau_3]+\nabla_{u_1}\partial_B\tau_3-\nabla_{u_3}\partial_B\tau_1
 }u_2\\
& -\lb u_1,
  \nabla_{\partial_B\tau_2}u_3-\nabla_{\partial_B\tau_3}u_2\rb_U\\
&-\nabla_{\partial_B\tau_1}(\lb u_2,
  u_3\rb_U+\nabla_{\partial_B\tau_2}u_3-\nabla_{\partial_B\tau_3}u_2)+\nabla_{[ \partial_B\tau_2,
  \partial_B\tau_3]+\nabla_{u_2}\partial_B\tau_3-\nabla_{u_3}\partial_B\tau_2
 }u_1.
\end{split}
\end{equation*}
Note that since for any $b_1,b_2\in\Gamma(B)$,  $R(b_1,b_2)$ restricts to a section
of $\operatorname{Hom}(U,U^\circ)$ (see \S\ref{LA-Dirac}), the
last summand on the right hand side of (M4) vanishes on sections of $U$.
By sorting out the terms and using (M4) on sections of $U$, we get
\begin{equation*}
\begin{split}
  &-R_\nabla(\partial_B\tau_3, \partial_B\tau_1)u_2-R_\nabla(\partial_B\tau_1,\partial_B\tau_2)u_3
  -R_\nabla(\partial_B\tau_2,\partial_B\tau_3)u_1\\
  &+\partial_Q\langle\ip{u_2}\ip{u_1}\omega,\partial_B\tau_3\rangle
   +\partial_Q\langle\ip{u_3}\ip{u_2}\omega,\partial_B\tau_1\rangle
   +\partial_Q\langle\ip{u_1}\ip{u_3}\omega,\partial_B\tau_2\rangle.
\end{split}
\end{equation*}
Since $R_\nabla=\partial_Q\circ R$, this is $-\partial_Q\upsilon$. 
We conclude by computing the $Q^*$-part of
$\operatorname{Jac}_{\lb\cdot\,,\cdot\rb}(u_1\oplus\tau_1,u_2\oplus\tau_2,u_3\oplus\tau_3)$.
Again, because
$\operatorname{Jac}_{\lb\cdot\,,\cdot\rb_{Q^*}}(\tau_1,\tau_2,\tau_3)=0$,
we get
\begin{equation*}
\begin{split}
  &\lb\Delta_{u_1}\tau_2-\Delta_{u_2}\tau_1, \tau_3
  \rb_{Q^*}+\Delta_{\lb u_1,
    u_2\rb_U+\nabla_{\partial_B\tau_1}u_2-\nabla_{\partial_B\tau_2}u_1}\tau_3-\Delta_{u_3}(\lb
  \tau_1,
  \tau_2\rb_{Q^*}+\Delta_{u_1}\tau_2-\Delta_{u_2}\tau_1)\\
  &+\rho_Q^*\dr\left\langle\lb \tau_1,
    \tau_2\rb_{Q^*}+\cancel{\Delta_{u_1}\tau_2}-\cancel{\Delta_{u_2}\tau_1}, u_3
  \right\rangle+\lb\tau_2,
  \Delta_{u_1}\tau_3-\Delta_{u_3}\tau_1\rb_{Q^*}+\rho_Q^*\dr(\rho_Q(\partial_Q\tau_2)\langle
  \tau_1, u_3\rangle)\\
  &+\Delta_{u_2}(\lb \tau_1,
  \tau_3\rb_{Q^*}+\Delta_{u_1}\tau_3-\Delta_{u_3}\tau_1)+\cancel{\rho_Q^*\dr(\rho_Q(u_2)\langle
  \tau_1, u_3\rangle)}-\Delta_{ \lb u_1,
    u_3\rb_U+\nabla_{\partial_B\tau_1}u_3-\nabla_{\partial_B\tau_3}u_1
  }\tau_2\\
  &+\rho_Q^*\dr \langle\tau_2, \cancel{\lb u_1,
  u_3\rb_U}+\nabla_{\partial_B\tau_1}u_3-\nabla_{\partial_B\tau_3}u_1
  \rangle -\lb\tau_1,
  \Delta_{u_2}\tau_3-\Delta_{u_3}\tau_2\rb_{Q^*}-\rho_Q^*\dr(\rho_Q(\partial_Q\tau_1)\langle
  \tau_2, u_3\rangle)\\
  &-\Delta_{u_1}(\lb \tau_2,
  \tau_3\rb_{Q^*}+\Delta_{u_2}\tau_3-\Delta_{u_3}\tau_2)-\cancel{\rho_Q^*\dr(\rho_Q(u_1)\langle
  \tau_2, u_3\rangle)}+\Delta_{ \lb u_2,
    u_3\rb_U+\nabla_{\partial_B\tau_2}u_3-\nabla_{\partial_B\tau_3}u_2
  }\tau_1\\
  &-\rho_Q^*\dr \langle\tau_1, \cancel{\lb u_2,
  u_3\rb_U}+\nabla_{\partial_B\tau_2}u_3-\nabla_{\partial_B\tau_3}u_2
  \rangle
 \end{split}
\end{equation*}
The six cancelling terms cancel by \eqref{dualdd}.  Reordering the terms, we get using Lemma
\ref{lemma_looks_like_basic}:
\begin{equation*}
\begin{split}
&-R_\Delta(u_3,
u_1)\tau_2-R_\Delta(u_2,u_3)\tau_1-R_\Delta(u_1,u_2)\tau_3\\
&+R(\partial_B\tau_2,\partial_B\tau_3)u_1+R(\partial_B\tau_1,\partial_B\tau_2)u_3
-R(\partial_B\tau_1,\partial_B\tau_3)u_2\\
&-\rho_Q^*\dr\langle
\tau_2,\partial_Q\Delta_{u_3}\tau_1\rangle
+\rho_Q^*\dr\left\langle\lb \tau_1,
  \tau_2\rb_{Q^*}, u_3\right\rangle+\rho_Q^*\dr(\rho_Q(\partial_Q\tau_2)\langle
  \tau_1, u_3\rangle)\\
&+\rho_Q^*\dr \langle\tau_2, \nabla_{\partial_B\tau_1}u_3
\rangle -\rho_Q^*\dr(\rho_Q(\partial_Q\tau_1)\langle
  \tau_2, u_3\rangle).
 \end{split}
\end{equation*}
For the second use of Lemma \ref{lemma_looks_like_basic}, we had to
replace $-\lb \tau_2,\Delta_{u_3}\tau_1\rb_{Q^*}$ by $\lb
\Delta_{u_3}\tau_1,\tau_2\rb_{Q^*}-\rho_Q^*\dr\langle
\tau_2, \partial_Q\Delta_{u_3}\tau_1\rangle$. This is why we get the
first term on the third line. Using \eqref{omega_dorfman_curv}, we get $\upsilon+\rho_Q^*\dr f$
with $f\in C^\infty(M)$ defined by  
\begin{equation*}
\begin{split}
  f=&-\langle
  \tau_2,\partial_Q\Delta_{u_3}\tau_1\rangle+\left\langle\Delta_{\partial_Q\tau_1}\tau_2-\nabla^*_{\partial_B\tau_2}\tau_1,
    u_3\right\rangle+\rho_Q(\partial_Q\tau_2)\langle
  \tau_1, u_3\rangle\\
  &+\langle\tau_2, \nabla_{\partial_B\tau_1}u_3 \rangle
  -\rho_Q(\partial_Q\tau_1)\langle
  \tau_2, u_3\rangle\\
  =&\langle \tau_2,
  -\partial_Q\Delta_{u_3}\tau_1-\lb\partial_Q\tau_1,u_3\rb+\nabla_{\partial_B\tau_1}u_3\rangle+\rho_Q(\partial_Q\tau_2)\langle
  \tau_1, u_3\rangle-\langle\nabla^*_{\partial_B\tau_2}\tau_1,
  u_3\rangle.
 \end{split}
\end{equation*}
By (M1), the first pairing equals 
$-\langle\tau_1,\nabla_{\partial_B\tau_2}u_3\rangle$.
Hence, we find $f=0$ using
$\rho_B\circ\partial_B=\rho_Q\circ\partial_Q$. We have proved 
$\operatorname{Jac}_{\lb\cdot\,,\cdot\rb}(u_1\oplus\tau_1,u_2\oplus\tau_2,u_3\oplus\tau_3)=(-\partial_Q\upsilon)\oplus\upsilon$.


\def\cprime{$'$} \def\polhk#1{\setbox0=\hbox{#1}{\ooalign{\hidewidth
  \lower1.5ex\hbox{`}\hidewidth\crcr\unhbox0}}} \def\cprime{$'$}
  \def\cprime{$'$}

\end{document}